\documentclass[11pt]{amsart}

\usepackage[utf8]{inputenc}
\usepackage[english]{babel}
\usepackage[a4paper,twoside,top=1.2in, bottom=1.1in, left=0.8in, right=0.8in]{geometry}

\usepackage{graphicx}
\usepackage{caption} 
\usepackage{amsmath}
\usepackage{amsthm}
\usepackage{amssymb}
\usepackage{esint} 
\usepackage{mathrsfs} 
\usepackage{xcolor}
\usepackage{array}
\usepackage{hhline}
\usepackage{enumitem} 
\usepackage{comment} 
\usepackage[toc,page]{appendix} 
\usepackage{xparse} 
\usepackage{mathtools}
\usepackage{fancyhdr} 
\usepackage{ifthen} 
\usepackage{forloop} 
\usepackage{xstring}
\usepackage{emptypage} 
\usepackage{setspace}
\usepackage[initials,alphabetic]{amsrefs} 

\usepackage{tikz}
\usetikzlibrary{arrows,shapes,patterns,calc,fadings,decorations.pathreplacing,decorations.markings,decorations.pathmorphing,backgrounds}

\usepackage{hyperref} 
\hypersetup{
	colorlinks = true,
	linkcolor = {blue},
	urlcolor = {red},
	citecolor = {blue}
}

\usepackage[nameinlink,capitalise,noabbrev]{cleveref} 

\newcounter{results}[section] 

\theoremstyle{plain}
\newtheorem{theorem}[results]{Theorem}
\newtheorem{lemma}[results]{Lemma}
\newtheorem{claim}[results]{Claim}
\newtheorem{proposition}[results]{Proposition}
\newtheorem{corollary}[results]{Corollary}

\newtheorem*{theorem*}{Theorem}
\newtheorem*{lemma*}{Lemma}
\newtheorem*{proposition*}{Proposition}
\newtheorem*{corollary*}{Corollary}
\newtheorem*{exercise*}{Exercise}
\newtheorem*{fact*}{Fact}
\newtheorem*{conjecture*}{Conjecture}

\theoremstyle{remark}
\newtheorem{remark}[results]{Remark}

\newtheorem*{remark*}{Remark}
\newtheorem*{question*}{Question}

\theoremstyle{definition}
\newtheorem{definition}[results]{Definition}

\newtheorem*{definition*}{Definition}
\newtheorem*{example*}{Example}

\numberwithin{equation}{section}

\crefname{theorem}{Theorem}{Theorems}
\crefname{lemma}{Lemma}{Lemmas}
\crefname{claim}{Claim}{Claims}
\crefname{proposition}{Proposition}{Propositions}
\crefname{corollary}{Corollary}{Corollaries}
\crefname{definition}{Definition}{Definitions}
\crefname{example}{Example}{Examples}
\crefname{remark}{Remark}{Remarks}
\crefname{figure}{Figure}{Figures}

\newcounter{alpharesults}

\newtheorem{alphatheorem}[alpharesults]{Theorem}
\crefname{alphatheorem}{Theorem}{Theorems}

\ifdefined\comma 
        \renewcommand{\comma}{\ensuremath{\, \text{, }}}
\else 
        \newcommand{\comma}{\ensuremath{\, \text{, }}}
\fi

\newcommand{\N}{\ensuremath{\mathbb N}}
\newcommand{\R}{\ensuremath{\mathbb R}}

\DeclarePairedDelimiter\abs{\lvert}{\rvert} 
\DeclarePairedDelimiter\norm{\lVert}{\rVert} 
\newcommand{\sk}[2]{\ensuremath{\langle #1 , #2 \rangle}} 

\newcommand{\st}{\ensuremath{\ :\ }} 
\newcommand{\eqdef}{\ensuremath{\coloneqq}} 

\DeclareMathOperator{\tr}{tr}

\renewcommand{\d}{\ensuremath{\mathrm{d}}} 

\DeclareMathOperator{\II}{I\!I} 
\DeclareMathOperator{\Ric}{Ric} 

\newcommand{\Haus}{\ensuremath{\mathscr H}} 
\DeclareMathOperator{\ind}{ind} 

\newcommand{\area}{\ensuremath{\Haus^2}} 

\DeclareMathOperator{\loc}{loc}
\newcommand{\jac}{\ensuremath{J}} 

\renewcommand{\a}{\alpha}
\newcommand{\e}{\epsilon}
\renewcommand{\d}{\delta}
\renewcommand{\l}{\lambda}

\renewcommand{\phi}{\varphi}

\newcommand{\fnu}{w}
\colorlet{myGray}{gray}
\colorlet{myBlue}{blue}
\colorlet{myBlack}{black}
\colorlet{myBackground}{gray!10}

\title[Ancient solutions to FBMCF]{Ancient solutions to free boundary mean curvature flow}
\author{Theodora Bourni}

\author{Giada Franz}
\newcommand\printaddress{{
\setlength{\parindent}{17pt}
\footnotesize
\bigskip
\par 
{\scshape \noindent Theodora Bourni}
\newline 
Department of Mathematics, University of Tennessee Knoxville, Knoxville, TN 37996-1320, USA.
\newline
\textit{E-mail address:} 
\texttt{tbourni@utk.edu}
\newline 
\par\medskip
{\scshape \noindent Giada Franz}
\newline CNRS and Université Gustave Eiffel, LAMA, 77420 Champs-sur-Marne, France.
\newline
\textit{E-mail address:} 
\texttt{giada.franz@cnrs.fr}
\par
}}

\begin{document}
\begin{abstract}
We establish rigidity results for ancient solutions to the free boundary mean curvature flow in manifolds with convex boundary. In particular, we show that any free boundary minimal hypersurface of Morse index $I$ admits an $I$-parameter family of ancient solutions that emanate from it. Moreover, among ancient solutions that backward converge exponentially fast to the minimal hypersurface, these exhaust all possibilities. Additionally, we construct a smooth free boundary mean convex foliation around an unstable free boundary minimal hypersurface that enables us to provide a more detailed geometric description of mean-convex ancient solutions that backward converge to that minimal surface.
\end{abstract}
\maketitle

\section{Introduction}

Morse theory provides a powerful framework for studying the topology of a space through the analysis of differentiable functions defined on it. Remarkably, many of its central ideas extend to infinite-dimensional settings, although their implementation requires substantially more delicate analysis. Foundational contributions in this direction include \cite{Smale1961,Palais1963,Chang1993,AbbondandoloMajer2001}.
More recently, Morse-theoretic methods have played an important role in the study of geometric variational problems, and in particular of the area functional on submanifolds of a fixed ambient manifold; see for example \cite{Tromba1977,Pitts1981,White1991,MarquesNeves2014,ChenGaspar2025}.

In this paper, we focus on Morse theory for the area functional restricted to \emph{smooth} submanifolds.
The gradient flow of this functional is the mean curvature flow, which can be described analytically as a one-parameter family of submanifolds $\Sigma_t$ in a Riemannian ambient manifold $(M,g)$ evolving by
\[
\frac{\partial X}{\partial t} = \vec H(X,t)\,, \quad X \in \Sigma_t \,,
\]
where $\vec H(X,t)$ denotes the mean curvature vector of $\Sigma_t$ at $X$.
This flow arises naturally in physics as a model for evolutionary processes governed by surface tension, such as the evolution of grain boundaries in annealing metals \cite{Mullins1956}. From a mathematical point of view, mean curvature flow was first studied systematically by Brakke \cite{Brakke1978} using geometric measure theory, and later by Huisken \cite{Huisken1984} via a more classical PDE approach. Since then, the subject has developed into a very active area of research.

Here, we are interested in the case of mean curvature flow of hypersurfaces in the presence of a boundary. The natural Neumann boundary value problem for mean curvature flow, known as the \emph{free boundary problem}, prescribes that the evolving hypersurface has boundary constrained to move on a fixed barrier hypersurface and meets it orthogonally. This problem was introduced by Huisken \cite{Huisken1989} in the non-parametric setting and further developed by Stahl \cite{Stahl1996Convergence,Stahl1996Regularity}, and later by \cite{Freire2010, Wheeler2014,Edelen2016,HirschLi2023,LangfordZhu2023}.

A fundamental role in free boundary mean curvature flow is played by \emph{free boundary minimal hypersurfaces}, which arise as stationary solutions of the flow equation. Recently, these objects have been the subject of intense research activity, due to fascinating new existence results, such as \cite{FraserSchoen2016,FranzSchulz2026,KarpukhinKusnerMcGrathStern2024}.
Most of these recent examples consist of \emph{unstable} critical points of the area functional. Therefore, by analogy with classical Morse theory, one expects the existence of flow lines emanating from these free boundary minimal hypersurfaces. More precisely, one expects \emph{ancient solutions} of mean curvature flow, namely solutions that have existed for all times in the past, which backward converge to unstable free boundary minimal hypersurfaces.

Morse-theoretic considerations also suggest that the dimension of the space of ancient solutions backward converging to $\Sigma$ is equal to the Morse index of $\Sigma$, i.e., the dimension of the negative directions of the area functional at $\Sigma$ at second order.
Our first main result makes this heuristic precise and may be summarized informally as follows. 

\begin{alphatheorem}[cf.\ \cref{construction,uniqueness}, \cref{COR}] \label{thmA}
Let $\Sigma$ be a free boundary minimal hypersurface of Morse index $I$ in a Riemannian manifold $(M,g)$ with convex boundary. Then there exists an $I$-parameter family of ancient free boundary mean curvature flows emanating from $\Sigma$. Moreover, any ancient solution with sufficiently fast decay to $\Sigma$ belongs to this family.
\end{alphatheorem}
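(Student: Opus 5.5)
We reduce the problem to a semilinear parabolic equation for a normal graph over $\Sigma$ with a nonlinear Neumann condition, and then run the standard stable/unstable manifold argument, in the spirit of Choi--Mantoulidis for closed minimal hypersurfaces, adapted to the free boundary. Fix a unit normal $\nu$ to $\Sigma$. Using a Fermi-type chart adapted to $\partial M$, we write nearby hypersurfaces meeting $\partial M$ orthogonally as graphs $u\colon\Sigma\to\R$. Free boundary mean curvature flow then becomes
\[
\partial_t u - L u = E(u)\ \text{in } \Sigma, \qquad \partial_\eta u - h(\nu,\nu)\,u = F(u)\ \text{on } \partial\Sigma .
\]
Here $L=\Delta_\Sigma+|\II|^2+\Ric(\nu,\nu)$ is the Jacobi operator, $h$ is the second fundamental form of $\partial M$, and $E$ and $F$ are at least quadratic in $(u,\nabla u,\nabla^2 u)$ and $(u,\nabla u)$ respectively. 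The operator $-L$ with this Robin condition is self-adjoint on $L^2(\Sigma)$, with eigenvalues $\lambda_1<\dots<0\le\dots$ and exactly $I$ negative ones, counted with multiplicity. Let $\phi_1,\dots,\phi_I$ be the corresponding orthonormal eigenfunctions, $P$ the projection onto their span, and $\mu>0$ the spectral gap, i.e. $\mu<|\lambda_I|$. Convexity of $\partial M$ is used only to get a good chart and to control $h$; it plays no role in the linear theory.

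\emph{Existence.} Given $a\in\R^I$ small, we look for a solution on $(-\infty,0]$ of the form $u=\sum_j a_j e^{-\lambda_j t}\phi_j+v$, where $v$ decays faster, as a fixed point of a Duhamel-type map. We treat the boundary nonlinearity by lifting $F$ to the interior through a bounded Neumann extension operator, so that the problem becomes one with homogeneous Robin data. For the unstable modes we integrate the nonlinearity from $-\infty$; for the neutral and stable modes we integrate forward in time. We work in the weighted parabolic Hölder space with norm $\sup_{t\le 0} e^{-\mu' t}\|u\|_{C^{2,\alpha}(\Sigma\times[t-1,t])}$, where $0<\mu'<|\lambda_I|$ and $2\mu'>|\lambda_1|$ need not hold. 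The quadratic nonlinearity gains the weight $e^{-2\mu' t}$, which beats any linear rate arising from integrating the stable part forward. Schauder estimates for the linear Robin problem then give a contraction for $|a|$ small. The map $a\mapsto u_a$ is injective, since the leading modes are read off by $P$ as $t\to-\infty$. This produces the $I$-parameter family. Near $t=0$ the solutions are extended by the flow for as long as they exist.

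\emph{Uniqueness.} Let $u$ be any ancient solution with $\|u(t)\|_{C^{2,\alpha}}\le Ce^{\epsilon t}$ for some $\epsilon>0$. We show that $u$ satisfies the fixed point equation for $a=\lim_{t\to-\infty}$ of the suitably rescaled $P u(t)$, after which uniqueness of the fixed point finishes the argument. First we bootstrap the decay rate. Project the equation onto the modes. The non-unstable part $(1-P)u$ satisfies the variation of constants formula forward from time $s$; letting $s\to-\infty$ shows that it decays at rate $2\epsilon$ modulo contributions of unstable rates. Iterating this upgrades the decay to $e^{\mu' t}$, so $u$ lies in our weighted space, and its unstable coefficients have well-defined limits $a_j$. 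Two fixed points with the same $a$ then coincide by the contraction estimate.

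The main obstacle I expect is the boundary coupling. The nonlinear Neumann term $F$ must be handled in parabolic Schauder theory with estimates that are uniform on unbounded time intervals and carry exponential weights. The intended tool is time-weighted $C^{2,\alpha}$ estimates for the linear Robin problem, applied on unit time windows. In addition, the geometric setup must ensure that orthogonality is exactly the Robin condition at linear order. Here convexity of the barrier guarantees that the graphical parametrization and the nearest-point projection onto $\partial M$ behave well.
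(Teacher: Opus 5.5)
\textbf{Gap: treatment and parametrization of the unstable modes.} Most of your architecture matches the paper's: graphs over $\Sigma$ via a chart tangent to $\partial M$, the Robin linearization, a Lyapunov--Perron fixed point in the space with weight $e^{-\mu' t}$ for $\mu'<|\lambda_I|$, and then a decay bootstrap plus uniqueness of the fixed point. Lifting the boundary nonlinearity with a Neumann extension operator is a legitimate substitute for the paper's treatment of inhomogeneous Robin data. The paper instead uses a maximum-principle $C^0$ bound, comparing with $\varphi_1$, together with Lions--Magenes, and that bound is where $\II^{\partial M}(\nu,\nu)>0$ is actually used. The problem is how you handle the unstable modes.

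You integrate the unstable modes from $-\infty$ and label solutions by the asymptotic coefficients $\lim_{t\to-\infty}e^{\lambda_j t}\langle u(\cdot,t),\varphi_j\rangle$.
\begin{itemize}
\item For $\lambda_j<0$ the Duhamel term is $e^{|\lambda_j|t}\int_{-\infty}^t e^{-|\lambda_j|s}f_j(s)\,ds$. With $f_j=O(e^{2\mu's})$ this converges only if $2\mu'>|\lambda_j|$, which is exactly the condition you say need not hold.
\item Since $\mu'<|\lambda_I|$, this fails for $j=1$ whenever $|\lambda_1|\ge 2|\lambda_I|$. In that case your map is not defined on your space. Your claim that the quadratic gain ``beats any linear rate'' is true only for the stable and neutral modes.
\item This is not an artifact of the method. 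Suppose $|\lambda_1|\ge 2|\lambda_I|$ and $u\approx a_Ie^{|\lambda_I|t}\varphi_I$. The quadratic part of the nonlinearity generically forces a $\varphi_1$-component of order $a_I^2e^{2|\lambda_I|t}$, or $t\,e^{|\lambda_1|t}$ at the resonance $\lambda_1=2\lambda_I$. This dominates $e^{|\lambda_1|t}$, so $e^{\lambda_1 t}\langle u(\cdot,t),\varphi_1\rangle$ has no limit.
\end{itemize}
Consequently two steps break down outside the gap regime $|\lambda_1|<2|\lambda_I|$: your injectivity argument (``leading modes read off by $P$ as $t\to-\infty$''), and the identification step in the uniqueness part ($a$ as a limit of rescaled $Pu$).

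\textbf{The missing idea: parametrize by data at the final time, as the paper does.}
\begin{itemize}
\item For $\lambda_j<-\mu'$ take $v_j(t)=e^{-\lambda_jt}v_j(0)-\int_t^0e^{-\lambda_j(t-s)}f_j(s)\,ds$. This is $O(e^{\mu't})$ for every $\mu'<|\lambda_I|$, with no gap condition.
\item Integrate from $-\infty$ only the modes with $\lambda_j>-\mu'$.
\item Normalize the correction to be $L^2$-orthogonal to $\varphi_1,\dots,\varphi_I$ at $t=0$. The family is then indexed by $a=Pu(\cdot,0)$, and injectivity is trivial.
\item In the uniqueness part, first do your bootstrap into the weighted space. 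That step is fine provided the unstable components are estimated by integrating from $0$.
\item Then translate time by a large $\tau$ and set $a_k=\langle u(\cdot,-\tau),\varphi_k\rangle$. Both $|a|$ and the weighted norm of $u(\cdot,\cdot-\tau)-\sum_k a_ke^{-\lambda_kt}\varphi_k$ are small, so uniqueness of the fixed point applies.
\end{itemize}
Finally, your uniqueness hypothesis of exponential decay is stronger than the paper's. The paper assumes only a small $C^{1,\alpha}$ norm plus $\liminf_{t\to-\infty}|t|\,\|u(\cdot,t)\|_{C^0}=0$, and needs the Merle--Zaag ODE lemma to produce exponential decay. Your stronger hypothesis is acceptable for the informal statement, and it is why you can skip that step.
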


While such behavior is conjectured by analogy with classical Morse theory, the analysis of ancient solutions to free boundary mean curvature flow is far from trivial and was largely unexplored, due to the presence of the boundary interacting with the infinite dimensional nature of the problem and the potential degeneracy of the critical points. With the exception of recent classification results for convex solutions in dimension one and in highly symmetric higher-dimensional settings \cite{BourniLangford2023,BourniLangford2025,BourniBurnsCatron2025}, there has been essentially no systematic study of ancient solutions.
The present work constitutes, to the best of our knowledge, the first  classification theory for general higher-dimension ancient solutions to the free boundary mean curvature flow. 
 
Analogous classification results have been obtained in the boundaryless case by Choi--Mantoulidis \cite{ChoiMantoulidis2022} and extended to the noncompact case by Choi--Huang--Lee \cite{ChoiHuangLee2025}. Earlier results of this type have been known to be true for nonlinear parabolic PDEs in various settings (see e.g.\ \cite{Lunardi1995}).
Our results are inspired by these papers, primarily by \cite{ChoiMantoulidis2022}.
However, compared to the boundaryless case, the presence of a free boundary introduces substantial analytical difficulties, especially because the boundary condition does not lead to a linear boundary term. The underlying parabolic PDE becomes significantly more involved, the associated stability operator requires a much more delicate analysis, and the linearization procedures are considerably more intricate due to the interaction between interior geometry and boundary behavior.

We also note that deforming a free boundary minimal hypersurface by its first (or any) eigenfunction does not preserve the orthogonality condition at the boundary. The ``correct'' deformation is far from trivial and requires more delicate handling. We achieve this via the implicit function theorem and show that one can indeed deform to first order by  keeping the orthogonality at the boundary. In fact, we construct a smooth free boundary mean-convex foliation around unstable free boundary minimal hypersurfaces. We then use this foliation to obtain a different construction of mean-convex ancient solutions that also provides a more detailed geometric description. 

\begin{alphatheorem}[cf.\ \cref{MCconstruction,COR}] \label{thmB}
Let $\Sigma$ be an unstable free boundary minimal hypersurface in a Riemannian manifold $(M,g)$ with convex boundary. Then there exists a mean-convex ancient solution emanating from $\Sigma$, converging exponentially fast at a rate determined by the first eigenvalue. Moreover, if $\Sigma$ is nondegenerate, this solution is unique, up to time translation, among mean-convex ancient solutions converging to $\Sigma$ in the $C^{1,\alpha}$-topology.
\end{alphatheorem}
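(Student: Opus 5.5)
Before the flow itself I would build the free boundary mean-convex foliation announced in the introduction. Let $\phi_1>0$ be the first eigenfunction of the Jacobi operator $L = \Delta_\Sigma + |A|^2 + \Ric(\nu,\nu)$ with the Robin condition $\partial_\eta \phi = \II^{\partial M}(\nu,\nu)\phi$, and $-\lambda_1<0$ its eigenvalue. Using the implicit function theorem, for each small $s$ I would find a graph $\Sigma_s$ over $\Sigma$ given by $u_s = s\phi_1 + O(s^2)$ that meets $\partial M$ orthogonally and has mean curvature $H_s = c(s)\,\phi_1$ modulo a correction. More precisely, I would solve for $(u,c)$ in $\{u\perp\phi_1\}\times\R$ the equation $H(s\phi_1+u) = c\,\phi_1^{(s)}$ together with the nonlinear free boundary condition. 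Here the kernel issue is handled by projecting off $\phi_1$, and invertibility comes from $L$ restricted to $\phi_1^\perp$, up to the remaining kernel when $\Sigma$ is degenerate. In the degenerate case I would instead prescribe $H_s = s\lambda_1\phi_1 + (\text{something in }\ker L)$, which is still positive since $\phi_1>0$ dominates. The result is $c(s) = -\lambda_1 s + O(s^2)$. So for $s>0$ small, $\Sigma_s$ has mean curvature vector pointing away from $\Sigma$ (mean convex, moving outward), and the $\Sigma_s$ foliate a one-sided neighbourhood.

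For existence I would use the leaves as barriers and take a limit of flows. Let $\Sigma^{k}_t$ be the free boundary MCF starting at $\Sigma_{s_k}$ at time $-T_k$, with $s_k\to0$. Since $\Sigma_{s_k}$ is mean convex, mean convexity is preserved under FBMCF in a convex-boundary manifold: the boundary derivative of $H$ is controlled by $\II^{\partial M}$, so the maximum principle applies. The flow therefore moves monotonically outward through the foliation, and the leaves $\Sigma_s$, $s<s_k$, act as inner barriers. I would choose $T_k$ so that the flow reaches the fixed leaf $\Sigma_{s_0}$ at time $0$, normalized by the height of $\langle u,\phi_1\rangle$. Near $\Sigma$ the linearization gives $\partial_t u \approx Lu$, so $u\approx s e^{\lambda_1 t}\phi_1$, and the time spent near $\Sigma$ is about $\lambda_1^{-1}\log(1/s_k)\to\infty$. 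Hence $T_k\to\infty$. Uniform graphical $C^{2,\alpha}$ estimates on $[-T_k,0]$ follow from staying inside the foliated neighbourhood plus interior and boundary Schauder estimates for the quasilinear Neumann problem. A subsequence then converges to an ancient mean-convex solution with $\Sigma_t\to\Sigma$ as $t\to-\infty$. Finally, a comparison with leaves shows $\|u(t)\|\le Ce^{\lambda_1 t}$, giving the rate.

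For uniqueness with $\Sigma$ nondegenerate, I would first argue that any mean-convex ancient solution converging in $C^{1,\alpha}$ becomes smooth graphical by parabolic regularity. Then $u\to0$, and mean convexity with the flow moving monotonically away forces $u$ to have a sign. Writing $u = a(t)\phi_1 + u^\perp$, the linear equation together with nondegeneracy (no kernel, spectral gap) makes the components along higher eigenfunctions decay relative to the $\phi_1$ component backward in time. Here I would use the positivity coming from mean convexity, $H = \partial_t u\cdot(\dots)>0$, to rule out dominance of the modes $\phi_j$ with $j\ge2$: those change sign, whereas $\partial_t u$ must be positive. This shows $u$ decays exponentially at rate $\lambda_1$. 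From there I would invoke the classification of exponentially decaying ancient solutions (\cref{uniqueness}). In the $I$-parameter family, mean convexity selects the one-dimensional direction $\phi_1$ with positive sign, and the remaining parameter is exactly time translation.

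The main obstacle is the foliation step. The free boundary condition is a nonlinear oblique condition, so the linearized problem is $(L, \partial_\eta - \II^{\partial M}(\nu,\nu))$, and the functional-analytic setup has to pair the interior equation with the boundary map in Hölder spaces so that it is Fredholm of index zero. Treating the degenerate kernel so that $H_s>0$ survives is also delicate. The second obstacle is the spectral domination argument in the uniqueness part, which needs a careful ODE lemma of Merle–Zaag type adapted to the Robin boundary.
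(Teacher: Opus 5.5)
Your foliation step is workable. The Lyapunov--Schmidt reduction prescribing $H=c\,\phi_1+{}$kernel term goes through, once you check that the kernel term is $O(s^2)$. The paper avoids any kernel analysis: it prescribes $H_w/\sk{\nu_w}{\bar\nu}=-\l_1\phi_1e^{-\l_1t}-\rho u$ with $\rho\in(\l_I,0)$, so the linearization $J_\Sigma+\rho$ with the Robin condition is invertible. Preservation of $H>0$ by the maximum principle is also fine.

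The gap is the rate. Your claim that ``a comparison with leaves shows $\|u(t)\|\le Ce^{\lambda_1 t}$'' fails because your leaves are time-independent. Comparing the flow with static leaves only confines it spatially. It says nothing about how fast $u(\cdot,t)\to0$ as $t\to-\infty$. Likewise, nothing you set up controls $T_k$ or gives the asymptotics $e^{\l_1t}v\to\phi_1$ that the theorem asserts.

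The missing ingredient is a pair of \emph{time-dependent} barriers moving at the first-eigenvalue rate. This is \cref{sub/super}. There the almost-MCF family $w(\cdot,t)$ of \cref{prop:SurfKthEigenf} is reparametrized as $w^\pm(x,t)=w(x,t\pm e^{-\l t})$ with $\l\in(\l_1,0)$. This gives a super- and a subsolution that pinch the flows started from $\Sigma_{w(\cdot,t_i)}$ at time $t_i$. That single step yields three things at once: the uniform existence interval, the bound $\|v(\cdot,t)\|\le Ce^{-\l_1t}$, and $e^{\l_1t}v\to\phi_1$. No normalization in time is needed.

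You could repair your argument the same way by moving along your foliation. Take $s=s(t)$ with $s'=\lambda_1 s\pm Ks^2$ (your convention, $\lambda_1>0$). The graph velocity is then $s'(\phi_1+O(s))$, while $H_s=\lambda_1 s\phi_1+O(s^2)$ and $\phi_1\ge c>0$, so these families are super- and subsolutions. Note that you cannot simply recover the rate afterwards from \cref{uniqueness} when $\Sigma$ is degenerate, since that theorem then needs the sublinear decay hypothesis.

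In the uniqueness part, your claim that the linear dynamics makes the higher modes decay relative to the $\phi_1$-mode backward in time is reversed. $\phi_1$ is the most unstable direction, so its mode decays fastest as $t\to-\infty$. Any nonzero component along $\phi_2,\dots,\phi_I$ therefore dominates backward in time. Only positivity of $u_t$ rules such components out, because those eigenfunctions change sign.

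Your preliminary step of proving the rate $\lambda_1$ before invoking the classification is also unnecessary. The paper appeals to \cref{COR}: for nondegenerate $\Sigma$, any ancient solution with small $C^{1,\a}$-norm (after time translation) lies in the family of \cref{construction}, with no decay hypothesis at all. Mean convexity then selects the first eigendirection with positive sign, and the remaining parameter is time translation.
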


\subsection{Further literature}

As described above, ancient solutions naturally appear when taking a Morse-theoretic perspective on (free boundary) minimal hypersurfaces, and they provide guidance for variational problems (see e.g.\ \cite[Section~1.1]{ChuLi2024} for a discussion related to min-max theory).
Historically, they were first studied because they play a central role in the analysis of singularity formation \cite{Hamilton1994}, and they have an intrinsic geometric interest due to their strong rigidity and symmetry properties (see, for example, \cite{BourniLangfordTinaglia2022} and the references therein).

In the boundaryless setting, extensive classification results are known in the convex regime. Under assumptions such as uniform convexity, bounded eccentricity, type-I curvature decay, or bounded isoperimetric ratio, the only compact convex ancient solutions are shrinking spheres \cite{HuiskenSinestrari2015}; see also \cite{DaskalopoulosHamiltonSesum2010, HaslhoferHershkovits2016, Langford2017}. When the ambient space is the sphere, the only geodesically convex ancient solutions are shrinking hemispheres \cite{BryanLouie2016, HuiskenSinestrari2015}.

In Euclidean space, however, shrinking spheres are not the only compact convex ancient solutions. There exist families of solutions that contract to round points as $t \to 0$ but become increasingly eccentric as $t \to -\infty$ \cite{AngenentDaskalopoulosSesum2019, HaslhoferHershkovits2016, White2003}. These exhaust all compact convex ancient solutions that are non-collapsed, or equivalently entire \cite{AngenentDaskalopoulosSesum2020, BourniLangfordLynch2023, BrendleNaff2024}. In the collapsed setting, a unique rotationally symmetric example is known \cite{BourniLangfordTinaglia2021}, while without symmetry assumptions a rich family of examples already appears in dimension two \cite{BourniLangfordTinaglia2022}.

Outside the convex regime, classification becomes significantly more difficult. Under strong decay assumptions as $t \to -\infty$, ancient solutions converge backward to minimal hypersurfaces, motivating the problem of classifying ancient solutions emanating from minimal hypersurfaces from another perspective.

\subsection{Future directions}

For the uniqueness statement of \cref{thmA} (cf.\ \cref{uniqueness}), we require that the ancient solution can be expressed as a graph over a free boundary minimal hypersurface. Moreover, we assume the graph to have small parabolic $C^{1,\a}$-norm and to converge to zero sublinearly in the $C^0$-norm. When the underlying minimal hypersurface is nondegenerate, the sublinear decay assumption can be dropped.
We conjecture that the sublinear decay assumption can be dropped in the mean-convex case as well, namely there exists a unique ancient solution converging in $C^{1,\a}$ to any given unstable (free boundary) minimal hypersurface. 

A natural further problem is to classify ancient solutions in a fixed ambient manifold without assuming backward convergence to a minimal hypersurface.
In this context, it is interesting to ask whether it is possible to construct solutions whose backward limit is singular, particularly in the mean-convex setting. Similarly, one may investigate whether ancient solutions with unbounded area can be mean convex. In an extreme scenario, it would be interesting to construct ancient solutions that ``fill up'' the ambient manifold.

\subsection{Plan of the paper} In \cref{sec:definitions}, we recall basic definitions and we set the notation. In \cref{sec:GraphsOverFBMS}, we obtain several elliptical and parabolic estimates for graphs over a free boundary minimal hypersurface. In \cref{sec:construction}, we prove existence of an $I$-parameter family of ancient solutions emanating from a free boundary minimal hypersurface, by careful application of a fix point theorem.  In \cref{sec:rigidity}, we obtain uniqueness of the constructed family of ancient solutions, by use of an ODE lemma due to Merle--Zaag \cite{MerleZaag1998}, as adapted by \cite{ChoiMantoulidis2022}. Finally, in \cref{sec:meanconvex}, we provide a different more geometric proof of the existence of  mean-convex ancient solutions, by the use of barriers constructed via the implicit function theorem.

\subsection*{Acknowledgements}
We would like to thank Kyeongsu Choi and Christos Mantoulidis for clarifying certain points in their paper \cite{ChoiMantoulidis2022}, especially for pointing out the use of the ``absorption lemma'' in the proof of \cref{absorption} together with the Schauder estimates. 
We would also like to thank Lucas Ambrozio for pointing us to his paper \cite{Ambrozio2015}.

T.\,B.\ was supported by NSF grant DMS-2405007, and G.\,F.~was supported by NSF grant DMS-2405361. 
Moreover, part of this work was performed while the authors were in residence at the Simons Laufer Mathematical Sciences Institute (formerly MSRI) during the Fall 2024 semester, supported by NSF grant DMS-1928930.

\section{Definitions and notation} \label{sec:definitions}

In this paper, we consider a compact Riemannian manifold $(M^{m+1},\sk{\cdot}{\cdot})$ with boundary. We denote by $\eta$ the outward unit co-normal to $\partial M$, in such a way that the second fundamental form of $\partial M\subset M$ is given by $\II^{\partial M}(X,Y) = -\sk{\nabla_XY}{\eta}$. We assume that $M$ has convex boundary, i.e., $\II^{\partial M}>0$. Observe that $\II^{\partial M} > 0$ if for example $M$ is the unit ball in $\R^{m+1}$
(and thus $\partial M$ is the unit sphere).

We also consider a properly embedded, smooth, two-sided, hypersurface $\Sigma^m\subset M$, which we assume to be free boundary in $M$, namely $\Sigma$ intersects $\partial M$ orthogonally (or, equivalently, $\eta$ coincides with the outward unit co-normal vector field to $\partial\Sigma$, see \cref{fig:notation}). 
We denote by $\nu$ a choice of a global unit normal vector field on $\Sigma$, by $A(X,Y) = (\nabla_XY)^\perp$ the second fundamental form of $\Sigma\subset M$, and by $\vec H_\Sigma = \tr A$ its mean curvature. Note that $\vec H_\Sigma$ points in the direction where the area ``decreases''. Moreover, let $H_\Sigma= -\sk{\vec H _\Sigma}{\nu}$ denote the scalar mean curvature with respect to the choice of unit normal $\nu$. 
Recall that $\Sigma$ is a free boundary minimal surface if it has free boundary with $H_\Sigma=0$.

\subsection {The Jacobi operator} \label{sec:JacobiOp}
We define $Q$ to be the symmetric bilinear form associated to the Jacobi operator $\jac_\Sigma=\Delta +q$, where we use the notation  $q=\Ric^M(\nu, \nu)+|A|^2$, namely\footnote{In the paper, we omit the measures of integration: $d\Haus^m$ and $d\Haus^{m-1}$ for integrals on $\Sigma$ and $\partial\Sigma$ respectively, and $dt$ (or $ds$) for integrals on the real line.}
\begin{equation}\label{bilinear}
Q(u, v)=\int_\Sigma \left(\nabla u\cdot \nabla v-q uv\right)-\int_{\partial \Sigma} \II^{\partial M} (\nu,\nu)uv\,,\quad \forall u, v\in W^{1,2}(\Sigma)\,.
\end{equation}
Recall that, if $\Sigma$ is a free boundary minimal hypersurface, then $Q(u,u)$ coincides with the second variation of the area functional along a variation generated by the vector field $u\nu$.

We let $\lambda_1<\lambda_2\le \lambda_3\le \ldots\to+\infty$ be the discrete spectrum of the elliptic problem
\begin{equation} \label{eq:JacobiEigenvalueProblem}
\begin{cases}
-\jac_\Sigma\varphi = \lambda\varphi & \text{on $\Sigma$}\\
\frac{\partial \varphi}{\partial \eta} = \II^{\partial M}(\nu,\nu)\varphi &\text{on $\partial\Sigma$}\,,
\end{cases}
\end{equation}
with a choice of associated eigenfunctions $\varphi_1,\varphi_2,\varphi_3,\ldots$, forming an orthonormal basis for $L^2(\Sigma)$.
The index $\ind(\Sigma)$ of $\Sigma$ is the number of negative eigenvalues (with multiplicity) of the Jacobi operator on~$\Sigma$ and the nullity $\operatorname{nul}(\Sigma)$ of $\Sigma$ is the multiplicity of zero as eigenvalue of the Jacobi operator.

Observe that the eigenfunctions $\varphi_k$ of the Jacobi operator satisfy
\[
Q(\phi_k, \phi_k)=\int_\Sigma -\jac_\Sigma \phi_k\phi_k+\int_{\partial \Sigma} \left(\frac{\partial \phi_k}{\partial \eta}-\II^{\partial M} (\nu,\nu)\phi_k\right)\phi_k=\l_k\,.
\]

Moreover, let us recall the variational characterization of the first eigenvalue $\l_1$:
\begin{equation}\label{Ral}
\l_1=\inf_{u\in W^{1,2}(\Sigma)}\frac{\int_\Sigma(\abs{\nabla u}^2-q u^2)-\int_{\partial \Sigma}\II^{\partial M}(\nu, \nu)u^2}{\int_\Sigma u^2}=\inf_{u\in W^{1,2}(\Sigma)}\frac{Q(u,u)}{\int_\Sigma u^2}\,.
\end{equation}

\subsection{Weighted norms}

We use the following norm notation.
For any function 
\[
f\colon\Sigma\times (-\infty, 0]\to \R\,,
\]
we define its parabolic $L^1$-norm as
\[
\|f\|_{L^1(\Sigma\times (-\infty,0])}=\int_{-\infty}^0\norm{f(\cdot, t)}_{L^1(\Sigma)} \,,
\]
and its parabolic $C^{k,\a}$-norm, where $\a\in(0,1)$, as
\[
\|f\|_{C^{k,\a}(\Sigma\times (t-1,t))}=\sum_{i+2j\le k}\sup_{\Sigma\times (t-1,t)}|\partial^i_x\partial^j_tf|+\sum_{i+2j=k}[\partial^i_x\partial^j_tf]_{C^{\a}(\Sigma\times (t-1,t))}\,,
\]
where
\[
[f]_{C^{\a}(\Sigma\times (t-1,t))}=\sup_{\stackrel{(x_i, t_i)\in \Sigma\times (t-1,t)}{(x_1, t_1)\ne (x_2, t_2)}}\frac{|f(x_1, t_1)-f(x_2, t_2)|}{d(x_1, x_2)^{\a}+|t_1-t_2|^{\frac\a2}}\,.
\]

Moreover, for $\l\le 0$, we define the following weighted $L^2$-norm 
\[
\|f\|_{L^{2,\l}(\Sigma\times(-\infty,0])}=\sup_{t\le 0} \{e^{\l t}\|f(\cdot, t)\|_{L^2(\Sigma)}\}\,,
\]
and the weighted H\"older norm
\[
    \|f\|_{C^{k,\alpha,\l}(\Sigma\times(-\infty,0])}=\sup_{t\le 0} \{e^{\l t}\|f\|_{C^{k,\a}(\Sigma\times (t-1,t))}\}\,.
\]

\begin{remark}
Observe that some other papers use the subscript ``$P$'' to denote the parabolic norms, meaning $C^{k,\a}_P$ instead of $C^{k,\a}$ (see e.g.\ \cite{ChoiMantoulidis2022}). Here we omit the subscript to make the notation lighter, but we specify the domain to make clear if the norm is parabolic or it is the norm of a fixed-time slice.
Moreover, observe that the weighted norms $L^{2,\l}$ and $C^{k,\a,\l}$ always refer to parabolic norms. Therefore, if we do not specify the domain of the norms $L^{2,\l}$ and $C^{k,\a,\l}$, we mean $L^{2,\l}(\Sigma\times(-\infty,0])$ and $C^{k,\a,\l}(\Sigma\times(-\infty,0])$.
\end{remark}

\section{Graphs over a free boundary minimal hypersurface} \label{sec:GraphsOverFBMS}

\subsection {Graphs using a local parametrization}\label{sec:GraphsViaFoliation}
We let  $\bar\nu$ be an extension of the vector field $\nu$ (the choice of unit normal to $\Sigma$)  to all of $M$ such that $\bar\nu$ is tangent to $\partial M$ and let $\psi_\Sigma$ be the flow of $\overline \nu$ starting from~$\Sigma$, that is $\psi_\Sigma(x, 0)=x$ for all $x\in \Sigma$. Note that 
\[
\psi_\Sigma\colon\Sigma\times(-r_0,r_0)\to M
\] is a diffeomorphism between $\Sigma\times(-r_0,r_0)$ for some $r_0>0$ and a tubular neighborhood of $\Sigma$ in $M$, so it provides a foliation of this neighborhood.
 Using the variable notation $\psi_\Sigma(x, s)$, we have that $\frac{d}{ds}\psi_\Sigma=\bar\nu$. We remark that $\bar \nu$ is not necessarily the normal to the leaves $\psi_\Sigma (\Sigma, s)$. However, since it is for the leaf $s=0$, we have that 
\begin{equation}\label{eq:normaltofoliation}
\langle \nabla_{e_i}\psi_\Sigma, \bar\nu\rangle=\int_0^s\frac{d}{d\bar s} \langle \nabla_{e_i}\psi_\Sigma, \bar\nu\rangle =  s \sk{e_i}{\nabla_\nu\bar\nu} + \int_0^s \frac{d^2}{d\bar s^2}\sk{\nabla_{e_i}\psi_\Sigma}{\bar\nu}(s-\bar s)\,,
\end{equation}
where $\{e_i\}$, $i=1,\dots, m$, is an orthonormal frame of the tangent space of $\Sigma$. 
Similarly,
\begin{equation} \label{eq:TiltBasis}
\begin{split}
\langle \nabla_{e_i}\psi_\Sigma, \nabla_{e_j}\psi_\Sigma\rangle-\sk{e_i}{e_j} &= \int_0^s\frac{d}{d\bar s} \langle \nabla_{e_i}\psi_\Sigma, \nabla_{e_j}\psi_\Sigma \rangle \\
&= -2\sk{A(e_i,e_j)}{\nu} s + \int_0^s \frac{d^2}{d\bar s^2}\sk{\nabla_{e_i}\psi_\Sigma}{\nabla_{e_j}\psi_\Sigma} (s-\bar s)\,,
\end{split}
\end{equation}
In particular, note that both $\abs{\sk{\nabla_{e_i}\psi_\Sigma}{\bar\nu}}$ and $\abs{\sk{\nabla_{e_i}\psi_\Sigma}{\nabla_{e_j}\psi_\Sigma} - \sk{e_i}{e_j}}$ on $\Sigma\times(-r_0,r_0)$ are bounded by $C\abs{s}$, where $C>0$ is a constant that depends\footnote{Note that $\nabla_{\partial_s}\nabla_{e_i}\psi_\Sigma = \nabla_{\partial_s} \frac{\partial}{\partial x_i}\psi_\Sigma=\nabla_{e_i}\frac{\partial}{\partial s}\psi_\Sigma = \nabla_{e_i} \bar\nu$.} on $\|\bar\nu\|_{C^1(\Sigma\times(-r_0,r_0))}$ and $\|\psi_\Sigma\|_{C^1(\Sigma\times(-r_0,r_0))}$.

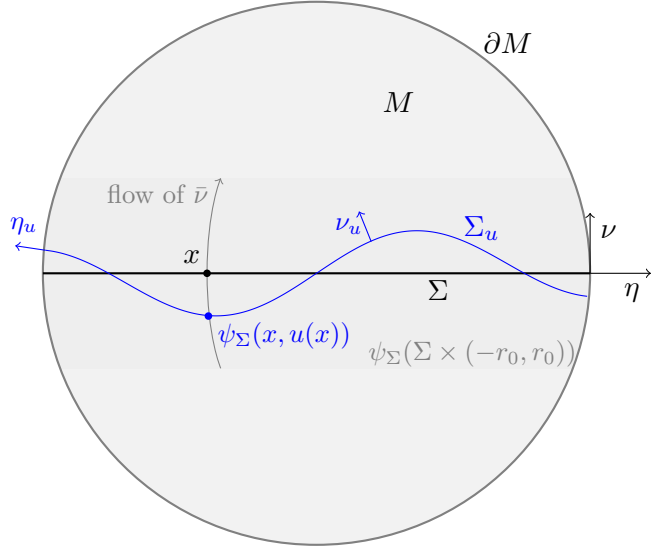
\begin{figure}[htbp]
\begin{tikzpicture}[scale=2]
\def\R{1.8}
\def\h{0.35}
\fill[gray!10] (0,0) circle (\R);
\begin{scope}
\clip (-\R,-\R*\h) rectangle (\R,\R*\h);
  \fill[gray!13] (0,0) circle (\R);
\end{scope}
\node[gray] at (\R*0.57,-\h*\R+0.1) {\small $\psi_\Sigma(\Sigma\times(-r_0,r_0))$};
\draw[gray,thick] (0,0) circle (\R);
\draw[thick] (-\R,0) -- (\R,0);
\node at (0.8,-0.11) {$\Sigma$};
\draw[blue, smooth, domain=-\R*0.99:\R*0.99, samples=120]
  plot (\x, {0.25*sin(2.5*\x r) + 0.05*\x});
\node[blue, right] at (0.9,0.3) {$\Sigma_u$};
\draw[blue,->] (-\R*0.99, {0.25*sin(2.5*(-\R*0.99) r) - 0.05*\R*0.99}) -- node[right,xshift=-4mm,yshift=3mm] {\small $\eta_u$} (-\R*0.99 - 0.2, {0.25*sin(2.5*(-\R*0.99) r) - 0.05*\R*0.99 + 0.03});
\draw[blue,->] (\R*0.2, 0.21) -- node[left,xshift=1mm,yshift=0mm] {\small $\nu_u$} (\R*0.2-0.08, 0.21 + 0.2);
\draw [gray,->] plot [smooth, tension=1.2] coordinates { (-0.35*\R,-\R*\h) (-0.4*\R,0) (-0.35*\R,\R*\h)};
\node[gray] at (-0.58*\R,\R*\h-0.1) {\small flow of $\bar\nu$};
\fill (-0.4*\R,0) circle (0.025) node [above, xshift=-2mm] {$x$};
\fill[blue] (-0.395*\R,-\R*\h*0.45) circle (0.025) node [below, xshift=10mm,yshift=0.5mm] {\small $\psi_\Sigma(x,u(x))$};
\draw[->] (\R,0) -- node[below,xshift=1.5mm] {$\eta$}(\R+0.4,0);
\draw[->] (\R,0) -- node[right,yshift=1.5mm] {$\nu$} (\R,0.4);
\node at (\R*0.3,\R*\h*1.8) {$M$};
\node at (\R*0.7,\R*0.85) {$\partial M$};
\end{tikzpicture}
\caption{Schematic representation of the notation.}
\label{fig:notation}
\end{figure}

Consider now a smooth function $u\colon\Sigma\to \R$ with $\norm{u}_{C^0(\Sigma)}< r_0$, and the hypersurface $\Sigma_u\subset M$ defined as the graph of $u$ via $\psi_\Sigma$. In other words, $\Sigma_u$ is defined as the image of the function $F\colon\Sigma\to M$, given by 
\[
F(x)\eqdef\psi_\Sigma(x, u(x))\,.
\]
Then, with $e_i$ as above, a basis for the tangent space of  $\Sigma_u$ is given by
\begin{equation} \label{eq:Fi}
F_i\eqdef \nabla _{e_i} \psi_\Sigma+u_i\overline \nu\,,\quad i=1,\dots, m\,,
\end{equation}
the metric is  $g_{ij}=\langle F_i, F_j\rangle$, and the unit normal $\nu_u$ is given by
\begin{equation} \label{eq:GraphicalNormal}
\nu_u=\frac{\bar \nu- \fnu_i g^{ij} F_j}{\sqrt{1-g^{ij}\fnu_i\fnu_j}}\,,\quad \text{where $\fnu_i\eqdef \langle F_i, \bar\nu\rangle$}  \quad \implies  \quad\sk{\nu_u}{\bar\nu} = \sqrt{1-g^{ij}w_iw_j}\,.
\end{equation}
Observe that, by \eqref{eq:normaltofoliation},
\begin{equation} \label{eq:Ri}
\fnu_i= u_i + u \sk{ e_i}{\nabla_\nu\bar\nu} + \int_0^u\frac{d^2}{ds^2}\sk{\nabla_{e_i}\psi_\Sigma}{\bar \nu} (u-s)\,.
\end{equation}
Moreover, by \eqref{eq:TiltBasis} and \eqref{eq:Fi},
\begin{equation} \label{eq:MetricGraph}
\begin{split}
g_{ij} - g_{ij}^\Sigma &= u_iu_j + \int_0^u \frac{d}{ds} \sk{\nabla_{e_i}\psi_\Sigma}{\nabla_{e_j}\psi_\Sigma}+ u_i \int_0^u \frac{d}{ds} \sk{\nabla_{e_i}\psi_\Sigma}{\bar\nu} + u_j\int_0^u \frac{d}{ds} \sk{\nabla_{e_j}\psi_\Sigma}{\bar\nu} \\
&= u_iu_j- 2\sk{A(e_i,e_j)}{\nu} u + \int_0^u\frac{d^2}{ds^2}\sk{\nabla_{e_i}\psi_\Sigma}{\nabla_{e_j}\psi_\Sigma} (u-s) +{}\\
&\phantom{=} {} + u_i \int_0^u \frac{d}{ds} \sk{\nabla_{e_i}\psi_\Sigma}{\bar\nu} + u_j\int_0^u \frac{d}{ds} \sk{\nabla_{e_j}\psi_\Sigma}{\bar\nu}\,.
\end{split}
\end{equation}
In particular, $\abs{\fnu_i-u_i}\le C\abs{u}$ and $\abs{g_{ij}-g_{ij}^\Sigma}\le C\norm{u}_{C^1}$, where $C>0$ is a constant that depends on $\Sigma$ and~$\bar\nu$.

The linearization of the mean curvature $H_u g^{ij}\sk{\nabla^2_{e_i,e_j}F}{\nu_u}$ of $\Sigma_u$ is given by $J_\Sigma u$ and the linearization of the contact angle $\langle \nu_u, \eta\rangle $ is given by $-\frac{\partial u}{\partial \eta}+\II^{\partial M}(\nu, \nu) u$ (see for example \cite{Ambrozio2015}). More precisely, in what follows (see \eqref{gmcf}), we will need the linearization of $\frac{H_u}{\langle \nu_u, \overline\nu\rangle}$, which is also $\jac_\Sigma$ since the linearization of $\sk{\nu_u}{\bar\nu}$ is $1$ (by \eqref{eq:GraphicalNormal} with \eqref{eq:Ri}, \eqref{eq:MetricGraph}). This justifies the following definition.

\begin{definition}
Given $u\colon \Sigma\to\R$ with $\norm{u}_{C^0(\Sigma)}< r_0$, we define the functions $E(u)\colon\Sigma\to\R$ and $\e(u)\colon\partial \Sigma\to\R$ as
\begin{equation}\label{linearization}
\begin{cases}
\displaystyle\frac{H_u}{\langle \nu_u, \overline\nu\rangle}=\jac_\Sigma u+E(u) & \text{on $\Sigma$}\\[3ex]
\langle \nu_u, \eta\rangle=-\frac{\partial u}{\partial \eta}+\II^{\partial M}(\nu, \nu)u+\e(u) & \text{on $\partial\Sigma$} \,.
\end{cases}
\end{equation}
\end{definition}

We need the following estimates on the error terms $E$ and $\e$. 
Observe that an estimate for $E$ in a more general case is provided in \cite{Simon1983} (see also \cite{ChoiMantoulidis2022}) and a precise expression in the case of $M=\Sigma\times\R$ is provided in \cite{ChoiHuangLee2025}. Therefore, we here focus on the boundary error term $\e$, for which we obtain a precise expression below in \eqref{e-form}.

\begin{proposition} \label{prop:TimeSliceEstimates}
For every smooth function $u\colon \Sigma\to\R$ with $\norm{u}_{C^1(\Sigma)} < r_0$, and for $E$ and $\e$ as in~\eqref{linearization}, the following estimates hold.
\begin{align*}
&\norm{E(u)}_{C^0(\Sigma)}\le C\|u\|_{C^1(\Sigma)}\|u\|_{C^2(\Sigma)}\,,
&&\norm{E(u)}_{C^\a(\Sigma)}\le C\|u\|_{C^{1,\a}(\Sigma)}\|u\|_{C^{2,\a}(\Sigma)}\,, \\
&\norm{\e(u)}_{C^0(\partial\Sigma)}\le C\|u\|^2_{C^1(\Sigma)}\,,
&&\norm{\e(u)}_{C^{1,\a}(\partial\Sigma)}\le C\|u\|_{C^{1,\a}(\Sigma)}\|u\|_{C^{2,\a}(\Sigma)}\,, \\
&\norm{E(u)}_{L^2(\Sigma)}\le C\|u\|_{C^2(\Sigma)}\|u\|_{W^{1,2}(\Sigma)}\,, 
&&\norm{\e(u)}_{L^2(\partial\Sigma)}\le C\|u\|_{C^{2}(\Sigma)}\|u\|_{W^{1,2}(\Sigma)}\,,
\end{align*}
where $C>0$ is a constant depending on $\Sigma$, $M$ and $r_0$.
\end{proposition}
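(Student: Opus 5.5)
The plan is to treat the interior and boundary error terms separately: $E(u)$ by Taylor expansion with an integral remainder, and $\e(u)$ through an explicit formula. Both $H_u/\sk{\nu_u}{\bar\nu}$ and $\sk{\nu_u}{\eta}$ are smooth functions of $(x,u,\nabla u,\nabla^2 u)$, resp.\ of $(x,u,\nabla u)$, and they are defined for $\norm{u}_{C^1}<r_0$. For the interior, write
\[
\frac{H_u}{\sk{\nu_u}{\bar\nu}}=\mathcal H(x,u,\nabla u,\nabla^2u).
\]
This expression is affine in $\nabla^2 u$, with coefficients $a^{ij}(x,u,\nabla u)$ that are smooth in their arguments, plus a term $b(x,u,\nabla u)$. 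Since $\mathcal H(x,0,0,0)=0$ (because $\Sigma$ is minimal) and its linearization at $0$ is $\jac_\Sigma$, we get
\[
E(u)=\bigl(a^{ij}(x,u,\nabla u)-a^{ij}(x,0,0)\bigr)u_{ij}+\bigl(b(x,u,\nabla u)-b(x,0,0)-\text{linear part}\bigr).
\]
The first bracket is bounded by $C(\abs{u}+\abs{\nabla u})$, via the mean value theorem in integral form. The second is $\int_0^1(1-\tau)D^2b(x,\tau u,\tau\nabla u)[(u,\nabla u),(u,\nabla u)]\,d\tau$, which is bounded by $C(\abs u+\abs{\nabla u})^2$. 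This gives the pointwise bound $\abs{E(u)}\le C(\abs u+\abs{\nabla u})(\abs u+\abs{\nabla u}+\abs{\nabla^2u})$, and hence the $C^0$ estimate. Integrating its square gives the $L^2$ estimate: one factor goes into $\sup$ as $\norm{u}_{C^2}$, and the other stays in $L^2$ as $\norm u_{W^{1,2}}$. The $C^\a$ estimate follows from the same integral representation, using that Hölder seminorms of products obey $[fg]_\a\le \norm f_{C^0}[g]_\a+[f]_\a\norm g_{C^0}$ and that composition with smooth functions preserves $C^\a$ bounds. This is the argument of \cite{Simon1983}, so I would only sketch it.

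The main work is the boundary term, and it is cleanest to get an explicit expression for $\e(u)$. By \eqref{eq:GraphicalNormal}, $\sk{\nu_u}{\eta}=(\sk{\bar\nu}{\eta}-\fnu_ig^{ij}\sk{F_j}{\eta})/\sqrt{1-g^{ij}\fnu_i\fnu_j}$. We have $\sk{\bar\nu}{\eta}=0$ because $\bar\nu$ is tangent to $\partial M$. On $\partial\Sigma$, take a frame with $e_m=\eta$ and $e_1,\dots,e_{m-1}$ tangent to $\partial\Sigma$. Then $\psi_\Sigma$ maps $\partial\Sigma\times(-r_0,r_0)$ into $\partial M$, so $\nabla_{e_\a}\psi_\Sigma$ is tangent to $\partial M$ for $\a<m$. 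Thus $\sk{F_\a}{\eta}=0$ for $\a<m$, and only $\sk{F_m}{\eta}=\sk{\nabla_\eta\psi_\Sigma}{\eta}$ survives. Its value is $1+O(u)$, and its first-order term comes from differentiating in $s$, namely $\sk{\nabla_\eta\bar\nu}{\eta}$, using that $\eta(\psi_\Sigma(x,s))$ varies along the flow. Expanding gives
\[
\sk{\nu_u}{\eta}=-\fnu_m g^{mm}\sk{F_m}{\eta}\cdot(1+O(\abs{\fnu}^2)),
\]
together with \eqref{eq:Ri}: $\fnu_m=u_\eta+u\sk{\eta}{\nabla_\nu\bar\nu}+O(u^2)$. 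Tangency of $\bar\nu$ gives $\sk{\eta}{\nabla_\nu\bar\nu}=-\sk{\nabla_\nu\eta}{\bar\nu}=\II^{\partial M}(\nu,\nu)$, so the linear part is $-u_\eta+\II^{\partial M}(\nu,\nu)u$, as claimed. The remainder then has the form
\[
\e(u)=u\,\Phi_1(x,u,\nabla u)\cdot(u,\nabla u)+\nabla u\,\Phi_2(x,u,\nabla u)\cdot(u,\nabla u),
\]
where $\Phi_1,\Phi_2$ are smooth. This is the precise expression I would aim for as \eqref{e-form}.

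The estimates for $\e$ follow from this structure. The $C^0$ bound $C\norm u_{C^1}^2$ is immediate. The $C^{1,\a}(\partial\Sigma)$ bound uses one tangential derivative, which falls on at most one factor of $\nabla u$ and produces $\nabla^2u$. This yields $\norm u_{C^{1,\a}}\norm u_{C^{2,\a}}$ by the product rule for Hölder norms. For the $L^2(\partial\Sigma)$ bound, one factor goes into $\sup$, so it suffices to bound $\norm{(u,\nabla u)}_{L^2(\partial\Sigma)}$. The trace inequality gives only $W^{1,2}$ control of the trace of $u$, which is too weak for $\nabla u$. I therefore expect this to be the main obstacle. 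I would apply the trace inequality to the quadratic quantity directly: $\int_{\partial\Sigma}f\le C\int_\Sigma(\abs f+\abs{\nabla f})$ with $f=\abs u^2+\abs{\nabla u}^2$. This gives $\int_{\partial\Sigma}\abs{\nabla u}^2\le C\int_\Sigma(\abs{\nabla u}^2+\abs{\nabla u}\abs{\nabla^2u})$, which is at most $C\norm u_{C^2}\norm u_{W^{1,1}}$. That is not quite $\norm u_{C^2}\norm u_{W^{1,2}}^2$, so the sharper approach is to keep the product structure: write $\e=\nabla u\cdot G$ with $\abs G\le C\abs{(u,\nabla u)}$ and $\abs{\nabla G}\le C\norm u_{C^2}$. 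Then apply the trace inequality to $\abs{\e}^2$, whose gradient is bounded by $C\norm u_{C^2}^2\abs{(u,\nabla u)}$. Combined with Cauchy–Schwarz this yields $\norm{\e}_{L^2(\partial\Sigma)}^2\le C\norm u_{C^2}^2\norm u_{W^{1,2}}^2$, after absorbing the lower-order $L^1$ terms using the compactness of $\Sigma$.
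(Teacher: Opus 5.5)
Your treatment of $E(u)$, and of the $C^0$ and $C^{1,\a}$ bounds for $\e(u)$, follows the paper's route: quasilinear structure plus a second-order Taylor remainder, which is exactly \cref{lem:sigma-estimates}. Deriving $\sk{\nu_u}{\eta}=-\fnu_i g^{im}\sk{F_m}{\eta}/\sqrt{1-g^{ij}\fnu_i\fnu_j}$ directly from \eqref{eq:GraphicalNormal}, using $\sk{\bar\nu}{\eta}=0=\sk{F_\alpha}{\eta}$, is a clean way to see that $\e(u)$ is a smooth function of $(x,u,\nabla u)$ vanishing to second order. The cross terms $\fnu_\alpha g^{\alpha m}$ that you dropped are quadratic, so this is harmless.

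There is, however, a sign slip in exactly the term that must cancel. Differentiating $\sk{\bar\nu}{\eta}=0$ along $\nu$ gives $\sk{\eta}{\nabla_\nu\bar\nu}=-\sk{\bar\nu}{\nabla_\nu\eta}=-\II^{\partial M}(\nu,\nu)$, because with the paper's convention $\II^{\partial M}(X,Y)=\sk{Y}{\nabla_X\eta}$. With your sign, the linear part of $-\fnu_m$ would be $-u_\eta-\II^{\partial M}(\nu,\nu)u$. That would leave a linear term $-2\II^{\partial M}(\nu,\nu)u$ in $\e(u)$. With the correct sign one gets $\fnu_m=u_\eta-\II^{\partial M}(\nu,\nu)u+O(u^2)$, as in the paper, and the linear parts cancel.

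The more serious problem is that your $L^2(\partial\Sigma)$ estimate does not close as written. The bound $\abs{\nabla\abs{\e}^2}\le C\norm{u}_{C^2}^2\abs{(u,\nabla u)}$ is cubic in $u$, while $\abs{\e}^2$ is quartic. Integrating it gives only $\norm{\e}_{L^2(\partial\Sigma)}^2\le C\norm{u}_{C^2}^2\bigl(\norm{u}_{W^{1,2}}^2+\norm{u}_{W^{1,2}}\bigr)$. Compactness of $\Sigma$ converts $L^1$ norms into $L^2$ norms, but it cannot raise the power of $\norm{u}_{W^{1,2}}$. The linear term dominates precisely when $\norm{u}_{W^{1,2}}$ is small, which is the regime in which the estimate is used in \cref{ADSlemma}.

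To fix this you must keep the extra factor: $\abs{\nabla\abs{\e}^2}=2\abs{\e}\abs{\nabla\e}\le C\norm{u}_{C^1}\norm{u}_{C^2}(\abs{u}+\abs{\nabla u})^2$. Its integral is at most $C\norm{u}_{C^2}^2\norm{u}_{W^{1,2}}^2$. Note also that $\e$ lives only on $\partial\Sigma$, so applying a trace inequality to $\abs{\e}^2$ presupposes an extension into $\Sigma$. It is simpler to work with a majorant defined on $\Sigma$, which is what the paper does. It uses the pointwise bound $\abs{\e(u)}\le C(\abs{u}+\abs{\nabla u})^2$ on $\partial\Sigma$ together with the $W^{1,2}\to L^2(\partial\Sigma)$ trace inequality applied to $(\abs{u}+\abs{\nabla u})^2$. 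Both this function and its gradient have $L^2(\Sigma)$ norm at most $C\norm{u}_{C^2}\norm{u}_{W^{1,2}}$: put one factor in $\sup$ and the other in $L^2$.
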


\begin{remark}
The constant $r_0$ for bounding the $C^2$-norms of $u$ can be changed with any other constant, as long as we assume $\norm{u}_{C^0(\Sigma)} < r_0$ (in such a way that $\psi_\Sigma(x,u(x))$ is well-defined). We choose to bound the $C^2$-norms by $r_0$ to simplify the exposition.
\end{remark}

In order to prove the previous proposition, we use the following general lemma.

\begin{lemma} \label{lem:sigma-estimates}
Let $\tilde\sigma\colon\Sigma\times\R\times\R^m\times\R^{m\times m}\to\R$ be a $C^3$-function. 
Assume that $\tilde\sigma(x,0,0,0) = 0$, $\frac{d}{ds}|_{s=0}\tilde\sigma(x,sa,sb,sc) = 0$ for all $(a,b,c)\in\R\times\R^m\times\R^{m\times m}$, and $\tilde\sigma$ is linear in the variable $c$, namely $\tilde\sigma(x,a,b,c) = \tilde\sigma_1(x,a,b)\cdot c+\tilde\sigma_2(x,a,b)$ for two functions $\tilde\sigma_1,\tilde\sigma_2\colon \Sigma\times\R\times\R^m\to\R$.

For every function $u\colon\Sigma\to\R$ with $\norm{u}_{C^1(\Sigma)}<r_0$, defining $\sigma(u)\eqdef\tilde\sigma(x,u,\nabla u,\nabla^2 u)$, we have
\[
\norm{\sigma(u)}_{C^0(\Sigma)}\le C\norm{u}_{C^1(\Sigma)}\norm{u}_{C^2(\Sigma)} \quad \text{and} \quad \norm{\sigma(u)}_{C^\alpha(\Sigma)} \le C \norm{u}_{C^{1,\alpha}(\Sigma)} \norm{u}_{C^{2,\alpha}(\Sigma)}\,,
\]
where $C>0$ is a constant depending on $\tilde\sigma$ and $r_0$.

\end{lemma}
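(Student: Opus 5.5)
The plan is to use the linear structure in $c$ and Taylor expand $\tilde\sigma_1,\tilde\sigma_2$ in the variables $(a,b)$. The hypotheses $\tilde\sigma(x,0,0,0)=0$ and $\frac{d}{ds}|_{s=0}\tilde\sigma(x,sa,sb,sc)=0$ for all $(a,b,c)$ translate into conditions on these two functions. Evaluating at $c=0$ gives $\tilde\sigma_2(x,0,0)=0$ and $D_{(a,b)}\tilde\sigma_2(x,0,0)=0$. Then, since
\[
\frac{d}{ds}\Big|_{s=0}\big(\tilde\sigma_1(x,sa,sb)\cdot sc\big)=\tilde\sigma_1(x,0,0)\cdot c\,,
\]
the $c$-linear part forces $\tilde\sigma_1(x,0,0)=0$. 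With these vanishing conditions, Taylor's formula with integral remainder gives
\[
\tilde\sigma_2(x,a,b)=\int_0^1(1-\tau)\,D^2_{(a,b)}\tilde\sigma_2(x,\tau a,\tau b)[(a,b),(a,b)]\,d\tau
\]
and
\[
\tilde\sigma_1(x,a,b)=\int_0^1 D_{(a,b)}\tilde\sigma_1(x,\tau a,\tau b)[(a,b)]\,d\tau\,.
\]
So $\tilde\sigma_2(x,a,b)=Q(x,a,b)[(a,b),(a,b)]$ is quadratic, and $\tilde\sigma_1(x,a,b)=L(x,a,b)[(a,b)]$ is linear, with coefficients $Q,L$ that are $C^1$ in all variables (because $\tilde\sigma$ is $C^3$). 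These coefficients are bounded together with their first derivatives on the compact set $\Sigma\times\{|a|+|b|\le r_0\}$, where compactness comes from $\Sigma$ being compact.

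For the $C^0$ estimate I plug in $(a,b,c)=(u,\nabla u,\nabla^2u)$. This gives
\[
|\sigma(u)|\le C|(u,\nabla u)|\,|\nabla^2u|+C|(u,\nabla u)|^2\le C\|u\|_{C^1}\|u\|_{C^2}\,,
\]
using $\|u\|_{C^1}\le\|u\|_{C^2}$.

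For the Hölder estimate I use the product rule $[fg]_\alpha\le\|f\|_{C^0}[g]_\alpha+[f]_\alpha\|g\|_{C^0}$. The composed coefficients $x\mapsto L(x,u,\nabla u)$ and $x\mapsto Q(x,u,\nabla u)$ have $C^\alpha$ norm bounded by $C(1+\|u\|_{C^{1,\alpha}})\le C$, since they are $C^1$ functions composed with a $C^\alpha$ map whose norm is bounded in terms of $r_0$. Hence $\|\tilde\sigma_1(x,u,\nabla u)\|_{C^\alpha}\le C\|u\|_{C^{1,\alpha}}$. Multiplying by $\nabla^2 u$, whose $C^\alpha$ norm is at most $\|u\|_{C^{2,\alpha}}$, gives the bound $C\|u\|_{C^{1,\alpha}}\|u\|_{C^{2,\alpha}}$. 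The quadratic term is bounded by $C\|u\|_{C^{1,\alpha}}^2\le C\|u\|_{C^{1,\alpha}}\|u\|_{C^{2,\alpha}}$.

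On a manifold I will interpret $\nabla u$ and $\nabla^2u$ in local coordinates or frames, covering $\Sigma$ by finitely many charts. Hölder seminorms across charts are handled in the standard way: for points at distance at least a fixed $\delta$, the difference is bounded by twice the $C^0$ norm divided by $\delta^\alpha$.

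The only genuinely delicate point is the Hölder bound on the composite coefficients. It is crucial that it does not produce a factor like $\|u\|_{C^{2,\alpha}}^2$. This works because the coefficients depend only on $(u,\nabla u)$, never on $\nabla^2u$, and that is exactly what the linearity assumption in $c$ guarantees.
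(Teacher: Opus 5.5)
Your proof is correct and is essentially the paper's argument. Both rest on a second-order Taylor expansion with integral remainder that exploits $\tilde\sigma(x,0,0,0)=0$, the vanishing first variation, and linearity in $c$. You split into $\tilde\sigma_1,\tilde\sigma_2$ and expand each in $(a,b)$ only, whereas the paper expands $\tilde\sigma$ along the ray $s\,q(u)$, with $q(u)=(u,\nabla u,\nabla^2u)$, and observes that $\nabla^2\tilde\sigma(q(su))[q(u),q(u)]$ is linear in $\nabla^2u$.

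One small correction is needed. The bound $\|L(x,u,\nabla u)\|_{C^\alpha}\le C(1+\|u\|_{C^{1,\alpha}})$ is not $\le C$, since only $\|u\|_{C^1}<r_0$ is assumed. However, in the product rule this Hölder seminorm always multiplies $\|(u,\nabla u)\|_{C^0}<r_0$. So $\|\tilde\sigma_1(x,u,\nabla u)\|_{C^\alpha}\le C\|u\|_{C^{1,\alpha}}$ still holds, and so does the bound $C\|u\|_{C^{1,\alpha}}^2$ on the quadratic term.
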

\begin{proof}
Denote $q(u)\eqdef (u,\nabla u,\nabla^2 u)$, in such a way that $\sigma(u) = \tilde\sigma(q(u))$ (where we are omitting the dependence on the variable $x$). Defining $f(s) = sq(u)$ for $s\in[0,1]$, by assumption we have that $(\tilde\sigma\circ f)(0)=0$ and $(\tilde\sigma\circ f)'(0)=0$. Therefore, we get
\[
\sigma(u) = (\tilde\sigma\circ f)(1) =\int_0^1(1-s) (\tilde\sigma\circ f)''(s)  = \int_0^1 (1-s)\nabla^2 \tilde\sigma(q(su))[q(u),q(u)] \,,
\]
where the hessian $\nabla^2\tilde\sigma$ is computed with respect to the variables $(a,b,c)\in\R\times\R^m\times\R^{m^2}$.
Using that $\nabla^2 \tilde\sigma(q(su))[q(u),q(u)]$ is a linear function with respect to $\nabla^2 u$ (which follows from the fact that $\tilde\sigma$ depends linearly on $c$), this implies that
\[
\norm{\sigma(u)}_{C^0(\Sigma)} \le C\norm{u}_{C^1(\Sigma)}\norm{u}_{C^2(\Sigma)}\,,
\]
where is a constant depending on $\tilde\sigma$ and $r_0$.

Similarly, using that $\norm{f\cdot g}_{C^\alpha} \le \norm{f}_{C^0}\norm{g}_{C^\alpha}+\norm{f}_{C^\alpha}\norm{g}_{C^0} \le 2\norm{f}_{C^\alpha}\norm{g}_{C^\alpha}$, we have that
\[
\norm{\sigma(u)}_{C^\alpha(\Sigma)} \le \norm{\nabla^2 \tilde\sigma(q(su))[q(u),q(u)]}_{C^\alpha(\Sigma)} \le C \norm{u}_{C^{1,\alpha}(\Sigma)} \norm{u}_{C^{2,\alpha}(\Sigma)}\,. \qedhere
\]
\end{proof}

\begin{proof}[Proof of \cref{prop:TimeSliceEstimates}]
Note that the estimates on $E$ are exactly as those derived in  \cite{Simon1983} and used in \cite{ChoiMantoulidis2022, ChoiHuangLee2025}.
A way to obtain these estimates is to observe that, by the formula for $E$ in \cite[(2.15)]{ChoiHuangLee2025}, we can write $E(u) = \tilde E(x,u,\nabla u,\nabla^2u)$ for a function $\tilde E\colon\Sigma\times\R\times\R^m\times\R^{m\times m}\to\R$, which satisfies the assumptions of \cref{lem:sigma-estimates} (see \cite[Lemma 4.1]{ChoiHuangLee2025}). Therefore, the estimates follow from the lemma. 
We focus thus here on $\e$, which we explicitly compute.

Let $\eta_u$ be the outward unit co-normal to $\partial\Sigma_u$. Then, 
\[
\eta = \frac{\eta_u - \sk{\eta_u}{\bar\nu} \bar \nu}{\abs{\eta_u - \sk{\eta_u}{\bar\nu} \bar \nu}} = \frac{\eta_u - \sk{\eta_u}{\bar\nu} \bar \nu}{\sqrt{1 - \sk{\eta_u}{\bar\nu}^2}}\,.
\]
Now, let $p$ be a point in $\partial \Sigma$ and choose normal coordinates for $\Sigma$ around $p$, such that $e_m(p) = \eta(p)$ and $\{e_\alpha(p)\}_{\alpha=1,\ldots,m-1}$ is an orthonormal basis of $T_p\partial\Sigma$ at $p$.
Observe that $F_1(x),\ldots,F_{m-1}(x)$ are a (not necessarily orthonormal) basis of $T_{F(x)}\partial\Sigma_u$. Therefore, at such point $p$, we have
\[
\eta_u = \frac{F_m - g^{\alpha\beta}g_{m\beta}F_\alpha}{\abs{F_m - g^{\alpha\beta}g_{m\beta}F_\alpha}} = \frac{F_m - g^{\alpha\beta}g_{m\beta}F_\alpha}{\sqrt{g_{mm} - g^{\a\beta}g_{m\a} g_{m\beta}}}\,,
\]
where $(g^{\alpha\beta})_{\alpha,\beta=1,\ldots,m-1}$ is the metric inverse to $(g_{\alpha\beta})_{\alpha,\beta=1,\ldots,m-1}$. Note that we use indices $\alpha,\beta$ varying on $1,\ldots,m-1$ and indices $i,j$ varying on $1,\ldots,m$.

As a consequence, using that $\sk{\eta_u}{\nu_u}=0$, we have
\begin{align*}
\sk{\nu_u}{\eta} &= \frac{-\sk{\eta_u}{\bar\nu}\sk{\nu_u}{\bar\nu}}{\sqrt{1 - \sk{\eta_u}{\bar\nu}^2}} \\
&=- (\fnu_m - g^{\alpha\beta} g_{m\beta}\fnu_\alpha) \left( \frac{1-g^{ij}\fnu_i\fnu_j}{ g_{mm} - g^{\alpha\beta}g_{m\alpha}g_{m\beta} - (\fnu_m-g^{\alpha\beta}g_{m\beta}\fnu_\alpha)^2} \right)^{\frac 12}\,.
\end{align*}
This implies that (at the point $p$)
\begin{equation}\label{e-form}
\e(u) = \frac{\partial u}{\partial \eta} - \II^{\partial M}(\nu,\nu)u - (\fnu_m - g^{\alpha\beta} g_{m\beta}\fnu_\alpha) \left( \frac{1-g^{ij}\fnu_i\fnu_j}{ g_{mm} - g^{\alpha\beta}g_{m\alpha}g_{m\beta} - (\fnu_m-g^{\alpha\beta}g_{m\beta}\fnu_\alpha)^2} \right)^{\frac 12}\,.
\end{equation}

Now, by \eqref{eq:Ri}, and using that $e_m=\eta$ at $p$, we have that
\begin{align*}
\fnu_m &= u_m + u \sk{ e_m}{\nabla_\nu\bar\nu} + \int_0^u\frac{d^2}{ds^2}\sk{\nabla_{e_m}\psi_\Sigma}{\bar \nu} (u-s) = \frac{\partial u}{\partial \eta} - \II^{\partial M}(\nu,\nu) + \int_0^u\frac{d^2}{ds^2}\sk{\nabla_{e_m}\psi_\Sigma}{\bar \nu} (u-s)\\
&= \frac{\partial u}{\partial \eta} - \II^{\partial M}(\nu,\nu) + O(\norm{u}_{C^0(\Sigma)}^2)\,.
\end{align*}
In particular, using also that $\fnu_i = O(\norm{u}_{C^1(\Sigma)})$ and $g_{ij} = \delta_{ij} + O(\norm{u}_{C^1(\Sigma)})$ from \eqref{eq:Ri} and \eqref{eq:MetricGraph}, we have that $\norm{\e(u)}_{C^0(\partial\Sigma)}\le C\norm{u}^2_{C^1(\Sigma)}$.
Moreover, using \eqref{e-form}, we see that the differential of $\e$ can be written as a function depending on $u,\nabla u, \nabla^2u$, namely $\nabla \e = \tilde\sigma(x,u,\nabla u,\nabla^2u)$, where $\tilde\sigma$ satisfies the assumption of \cref{lem:sigma-estimates}, as a function defined on $\partial\Sigma$ instead of $\Sigma$ (however, note that $\nabla u, \nabla^2u$ are computed on $\Sigma$ and just restricted to $\partial\Sigma$). This implies all the remaining desired estimates on the norms of $\e$. 

Finally, let us prove the estimates on the $L^2$-norms of $E(u)$ and $\e(u)$. Observe that the proof of \cref{lem:sigma-estimates} shows that the estimates $\norm{E(u)}_{C^0(\Sigma)}\le C\norm{u}_{C^1(\Sigma)}\norm{u}_{C^2(\Sigma)}$ and $\norm{\e(u)}_{C^0(\partial\Sigma)} \le C\norm{u}^2_{C^1(\Sigma)}$ hold pointwise, in the sense that $\abs{E(u)}\le C (\abs{u}+\abs{\nabla u})(\abs{u}+\abs{\nabla u} + \abs{\nabla^2u})$ at every $x\in\Sigma$ and $\abs{\e(u)}\le C (\abs{u}+\abs{\nabla u})^2$ at every $x\in\partial\Sigma$. Therefore, we obtain the desired inequalities:
\[
\norm{E(u)}_{L^2(\Sigma)} \le C\norm{(\abs{u}+\abs{\nabla u})(\abs{u}+\abs{\nabla u} + \abs{\nabla^2u})}_{L^2(\Sigma)} \le C \norm{u}_{C^2(\Sigma)} \norm{u}_{W^{1,2}(\Sigma)}\,,
\]
and, using the trace inequality,
\begin{align*}
\norm{\e(u)}_{L^2(\partial\Sigma)} &\le C\norm{(\abs{u}+\abs{\nabla u})^2}_{L^2(\partial\Sigma)} \le C \norm{(\abs{u}+\abs{\nabla u})^2}_{W^{1,2}(\Sigma)}\\
&\le C \left(\norm{(\abs{u}+\abs{\nabla u})^2}_{L^{2}(\Sigma)} + \norm{\nabla(\abs{u}+\abs{\nabla u})^2}_{L^{2}(\Sigma)}\right) \le C \norm{u}_{C^2(\Sigma)}\norm{u}_{W^{1,2}(\Sigma)}\,. \qedhere
\end{align*}
\end{proof}

\subsection{Reversed Poincaré inequality for area-decreasing graphs}
We need the following reversed Poincaré inequality for functions $u$ whose graph has area less than the area of the base hypersurface.

\begin{proposition}[cf. {\cite[Lemma~4.7]{AngenentDaskalopoulosSesum2019}, \cite[Proposition~2.3]{BrendleChoi2019}}] \label{prop:reversedpoincare}
Let $\Sigma\subset M$ be a free boundary minimal hypersurface. There exists $\varepsilon>0$ such that the following result holds.
Let $u\colon\Sigma\to\R$ be a smooth function with $\norm{u}_{C^1(\Sigma)}\le \varepsilon$ and let $\Sigma_u\subset M$ be the graph of $u$ over $\Sigma$ via $\psi_\Sigma$ as in \cref{sec:GraphsViaFoliation}.
If $\area(\Sigma_u)\le\area(\Sigma)$, then
\[
    \int_\Sigma\abs{\nabla u}^2\le C \int_\Sigma\abs{u}^2,
\]
for some constant $C>0$ depending on $M$, $\Sigma$ and $\varepsilon$.
\end{proposition}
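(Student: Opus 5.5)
Expand the area of $\Sigma_u$ to second order in $u$, mimicking the standard second variation formula. Write $\area(\Sigma_u)=\int_\Sigma \sqrt{\det(g_{ij})}\,\sqrt{\det g^\Sigma}^{-1}$ in terms of the graphical metric \eqref{eq:MetricGraph}. The integrand is a smooth function $\tilde a(x,u,\nabla u)$ with $\tilde a(x,0,0)=1$, and crucially it does not depend on $\nabla^2 u$. Taylor expanding to second order (as in the proof of \cref{lem:sigma-estimates}) gives
\[
\area(\Sigma_u)=\area(\Sigma)+\delta\area(\Sigma)[u]+\tfrac12 Q(u,u)+R(u),\qquad |R(u)|\le C\int_\Sigma(\abs{u}+\abs{\nabla u})^3 + C\int_{\partial\Sigma}\abs{u}^3 .
\]
Since $\Sigma$ is free boundary minimal, the first variation vanishes. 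Note that $\bar\nu$ is tangent to $\partial M$, so the variation is admissible. The quadratic part is the second variation $\tfrac12 Q(u,u)$ from \eqref{bilinear}. Here some care is needed, because the variation field is $\bar\nu$ extended along a flow that is not a normal exponential map. The second-order term coming from $\nabla_{\bar\nu}\bar\nu$ contributes $\int_\Sigma \operatorname{div}_\Sigma(\nabla_{\bar\nu}\bar\nu)\,u^2$-type terms. After integrating by parts, these are absorbed into a modified zeroth order term and a boundary term. In any case, all such terms are bounded by $C\int_\Sigma u^2 + C\int_{\partial\Sigma}u^2$ and have no $\abs{\nabla u}^2$ contribution beyond $\int\abs{\nabla u}^2$. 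This is enough, since we only need a lower bound.

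So the key inequality is
\[
\area(\Sigma_u)-\area(\Sigma)\ \ge\ \tfrac12\int_\Sigma\abs{\nabla u}^2 - C\int_\Sigma u^2 - C\int_{\partial\Sigma}u^2 - C\varepsilon\int_\Sigma(u^2+\abs{\nabla u}^2).
\]
For the cubic remainder, I use $\norm{u}_{C^1}\le\varepsilon$ pointwise to bound $(\abs{u}+\abs{\nabla u})^3\le 2\varepsilon(u^2+\abs{\nabla u}^2)$, and similarly for the boundary cubic term. For the boundary $L^2$ term, I use the trace-interpolation inequality
\[
\int_{\partial\Sigma}u^2\le \delta\int_\Sigma\abs{\nabla u}^2+C_\delta\int_\Sigma u^2 .
\]
This follows from applying the divergence theorem to $u^2 X$ with $X$ a smooth extension of $\eta$.

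Using the hypothesis $\area(\Sigma_u)\le\area(\Sigma)$, the left side is $\le0$. Choosing $\delta$ and $\varepsilon$ small (say so that $C\delta+C\varepsilon\le\tfrac14$), I absorb the gradient terms and get $\tfrac14\int\abs{\nabla u}^2\le C\int u^2$.

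The main obstacle is the careful second-order expansion of the area functional for graphs via the non-normal flow $\psi_\Sigma$. Specifically, I need to verify that the quadratic term has the form $\int\abs{\nabla u}^2$ plus lower-order terms (in $u^2$ on $\Sigma$ and $\partial\Sigma$) with no cross term $u\,\nabla u$ left uncontrolled. I would handle any cross terms $\int b_i u u_i$ either by Young's inequality, $\le\delta\int\abs{\nabla u}^2+C_\delta\int u^2$, or by writing them as $\tfrac12\int b\cdot\nabla(u^2)$ and integrating by parts into interior and boundary $u^2$ terms. Either way, the precise identification with $Q$ is unnecessary.
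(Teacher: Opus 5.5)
Your proposal is correct and is essentially the paper's argument: expand the area of $\Sigma_u$ to second order, use minimality to kill the linear term, and then use $\area(\Sigma_u)\le\area(\Sigma)$ to absorb the cross terms and the cubic remainder, which is bounded pointwise by $C\varepsilon(u^2+\abs{\nabla u}^2)$. The only difference is that the paper works directly with the pointwise expansion of $\sqrt{\det g_{ij}}$ from \eqref{eq:MetricGraph}, where the linear term $Hu$ vanishes pointwise and the cross terms $\sk{\nabla u}{\vec a(u)}$ are handled by Cauchy--Schwarz; no integration by parts is needed, so the boundary integrals and trace-interpolation inequality you pick up by identifying the quadratic term with $Q$ never appear---this is exactly your Young's-inequality alternative.
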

\begin{proof}
Recall from \eqref{eq:MetricGraph} that 
\begin{align*}
g_{ij} - g_{ij}^\Sigma &=  u_iu_j- 2\sk{A(e_i,e_j)}{\nu} u + \int_0^u\frac{d^2}{ds^2}\sk{\nabla_{e_i}\psi_\Sigma}{\nabla_{e_j}\psi_\Sigma} (u-s) +{}\\
&\phantom{=} {} + u_i \int_0^u \frac{d}{ds} \sk{\nabla_{e_i}\psi_\Sigma}{\bar\nu} + u_j\int_0^u \frac{d}{ds} \sk{\nabla_{e_j}\psi_\Sigma}{\bar\nu}\\
&= u_iu_j- 2\sk{A(e_i,e_j)}{\nu} u  + u_i a_j(u) + u_ja_i(u) + b_{ij}(u)\,,
\end{align*}
where $a_i,b_{ij}$ are smooth functions depending on $u$, such that $\abs{a_i(u)}\le C\abs{u}$, $b_{ij}(u)\le C{u}^2$.
Therefore, we get (see also \cite[Chapter~2, Section~6]{Simon1983lectures})
\begin{align*}
    \sqrt{\det g_{ij}}
    &= \sqrt{\det g_{ij}^\Sigma}\left[1 + Hu  + \frac 12\abs{\nabla u}^2 + \sk{\nabla u}{\vec a(u)} + b(u)+ (Hu)^2 - \abs{A}^2{u}^2\right] + O(\abs{u}_{C^1}^3)\,,
\end{align*}
where $\vec a (u) = ((g^\Sigma)^{ij}a_i(u))_j$ and $b(u) = \frac12 (g^\Sigma)^{ij}b_{ij}(u)$.
Using that $\Sigma$ is a minimal hypersurface, we thus have
\begin{align*}
    \sqrt{\det g_{ij}}
    &= \sqrt{\det g_{ij}^\Sigma}\left[1 +  \frac 12\abs{\nabla u}^2 + \sk{\nabla u}{\vec a(u)} + b(u) - \abs{A}^2{u}^2 \right]+ O(\abs{u}_{C^1}^3)\,.
\end{align*}
As a consequence, we get that
\[
    \area(\Sigma_u) = \area(\Sigma) + \int_\Sigma\left[\frac 12\abs{\nabla u}^2 + \sk{\nabla u}{\vec a(u)} + b(u) - \abs{A}^2{u}^2 \right] + O(\norm{u}^3_{W^{1,2}(\Sigma)})\,.
\]
Using that $\area(\Sigma_u)\le\area(\Sigma)$, we thus get 
\[
    \norm{\nabla u}_{L^2(\Sigma)}^2 \le C \norm{\nabla u}_{L^2(\Sigma)} \norm{u}_{L^2} + C \norm{u}_{L^2(\Sigma)}^2+ O(\norm{u}^3_{W^{1,2}(\Sigma)})\,,
\]
which implies the desired reversed Poincaré inequality if $\norm{u}_{C^1(\Sigma)}\le \varepsilon$ is sufficiently small.
\end{proof}

\subsection{Graphical mean curvature flow}

Let $u\colon \Sigma\times(T_0,T)\to\R$ be a time-dependent function over~$\Sigma$, and consider the associated family of hypersurfaces $\Sigma_t$ defined as the graph of $u(\cdot,t)$ over $\psi_\Sigma$. In other words, $\Sigma_t$ is the image of the function $F(\cdot,t)\colon\Sigma\to M$ given by  
\[
F(x,t) \eqdef \psi_\Sigma(x, u(x,t))\,.
\]
Then the family of hypersurfaces $\Sigma_t$ moves with velocity  
\[
\frac{d}{dt} F = u_t\, \overline{\nu}\,.
\]
In particular, the normal speed is $u_t \langle \nu_u, \overline{\nu} \rangle$.  
Consequently, the family evolves by graphical mean curvature flow if and only if  
\begin{equation}\label{gmcf}
\begin{cases}
u_t = \displaystyle\frac{H_u}{\langle \nu_u, \overline{\nu} \rangle} &\text{on $\Sigma$}\\
\sk{\nu_u}{\eta} = 0 & \text{on $\partial\Sigma$} \,. 
\end{cases}
\end{equation}

Using the linearization of the mean curvature and the Neumann condition in \eqref{linearization}, we infer that graphical mean curvature flow is equivalent to a solution of the equation 
\begin{equation}\label{gmcf2}
\begin{cases} u_t-\jac_\Sigma u= E(u) & \text{on $\Sigma$}\\
\frac{\partial u}{\partial \eta}-\II^{\partial M}(\nu, \nu)u=\e(u) & \text{on $\partial\Sigma$}\,.
\end{cases}
\end{equation}

First, let us state here, as we need it later, the Schauder estimates for linear parabolic equations as~\eqref{gmcf2}, see for example \cite{Simon1997}, \cite[Theorem 1]{Baderko1998}, and \cite[Theorem~6.30]{GilbargTrudinger2001} for the elliptic case.
\begin{theorem}[Parabolic Schauder estimates]\label{Schauder}
Let $w\colon\Sigma\times(T_0,T)\to\R$ be a solution to the equation 
\[
\begin{cases}
w_t-\jac_\Sigma w=f(x,t) & \text{on $\Sigma$}\\
\frac{\partial w}{\partial \eta}-\II^{\partial M}(\nu, \nu)w= g(x,t) & \text{on $\partial\Sigma$} \,,
\end{cases}
\]
in an interval $t\in (T_0, T)$. Then, for all $t\in (T_0+2, T)$, 
\[
\|w\|_{C^{2,\a}(\Sigma\times(t-1,t))}\le C(\|w\|_{L^2(\Sigma\times(t-2,t))}+\|f\|_{C^{0,\alpha}(\Sigma\times(t-2,t))}+\|g\|_{C^{1,\alpha}(\partial\Sigma\times(t-2,t))})\,,
\]
where $C>0$ is a constant depending only on $\Sigma$ and $M$.
\end{theorem}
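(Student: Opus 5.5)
We prove \cref{Schauder} by reducing it to the standard local Schauder theory for linear parabolic equations with an oblique (here conormal) boundary condition, and then removing the higher-norm term on the right-hand side by interpolation and an $L^2$ energy bound. The operator $\partial_t-\jac_\Sigma=\partial_t-\Delta-q$ is uniformly parabolic with smooth, time-independent coefficients on the compact manifold $\Sigma$. The boundary operator $\frac{\partial}{\partial\eta}-\II^{\partial M}(\nu,\nu)$ is a Neumann-type operator with smooth coefficients, and it satisfies the complementing (Lopatinskii--Shapiro) condition because $\eta$ is transverse to $\partial\Sigma$.

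\textbf{Local estimates.} Cover $\Sigma$ by finitely many coordinate charts: interior balls, and boundary half-balls in which $\partial\Sigma$ is flattened. In each chart the equation becomes a linear parabolic equation with Hölder coefficients, and in boundary charts the boundary condition becomes an oblique derivative condition. The local parabolic Schauder estimate (cf.\ \cite{Simon1997}, \cite[Theorem 1]{Baderko1998}) on nested parabolic cylinders $Q_{r}\subset Q_{2r}$ gives
\[
\|w\|_{C^{2,\a}(Q_r)}\le C\bigl(\|w\|_{C^0(Q_{2r})}+\|f\|_{C^{\a}(Q_{2r})}+\|g\|_{C^{1,\a}(Q_{2r}\cap\partial)}\bigr).
\]
Summing over the charts yields this estimate on $\Sigma\times(t-1,t)$, with the norms of the data and of $w$ taken over a slightly larger time interval such as $\Sigma\times(t-3/2,t)$. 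Since $\Sigma$ and $M$ are fixed, the constants are uniform in $t$, because the equation is autonomous.

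\textbf{Replacing $C^0$ by $L^2$.} This is the main obstacle. We use a scaled family of cylinders $\Sigma\times(t-\tau,t)$ with $\tau\in[1,2]$, together with the interpolation inequality
\[
\|w\|_{C^0}\le \delta\|w\|_{C^{2,\a}}+C_\delta\|w\|_{L^2}.
\]
Combined with the local estimate on nested intervals, this gives an inequality of the form
\[
\Phi(\tau)\le \tfrac12\Phi(\tau')+C(\tau'-\tau)^{-k}\bigl(\|w\|_{L^2}+\|f\|+\|g\|\bigr),
\]
where $\Phi$ is a weighted $C^{2,\a}$ seminorm on the interval of length $\tau$. The standard absorption lemma (iteration over $\tau<\tau'$ in $[1,2]$) then removes the $\Phi$ term on the right. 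This produces the stated estimate with $\|w\|_{L^2(\Sigma\times(t-2,t))}$.

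One could instead use De Giorgi--Nash--Moser $L^\infty$ bounds for the Neumann problem, obtained by testing the equation with powers of $w$ and controlling the boundary term $\int_{\partial\Sigma}\II^{\partial M}(\nu,\nu)w^2$ via the trace inequality. This route requires care only in that the boundary data $g$ enters as a flux term, handled again by the trace inequality. Both routes rely only on compactness of $\Sigma$ and smoothness of the coefficients, so $C$ depends only on $\Sigma$ and $M$.
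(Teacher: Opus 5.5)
Your proposal is correct and consistent with the paper, which gives no proof of this statement but simply quotes the standard parabolic Schauder theory for oblique-derivative (Neumann/Robin) problems from Simon and Baderko, with Gilbarg--Trudinger as the elliptic analogue. The one step you add beyond that citation is replacing the $C^0$ term by the $L^2$ term via interpolation and an absorption iteration over nested time intervals $\Sigma\times(t-\tau,t)$, $\tau\in[1,2]$; this is sound, and it is the same interpolation-and-absorption mechanism the paper itself uses in the proof of \cref{absorption}.
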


Moreover, in order to study the problem \eqref{gmcf2}, we need the following estimates on the parabolic norms $E$ and $\e$.

\begin{proposition}\label{E-estimates} 
For every time-dependent functions $u,v\colon\Sigma\times(T_0,T)\to\R$ with $\norm{u(\cdot,t)}_{C^1(\Sigma)}<r_0$ for all $t\in(T_0,T)$, we have the following estimates for $E$ and $\e$ defined in \eqref{linearization}, for every $t\in(T_0+1,T)$:
\begin{align*}
\|E(u)\|_{C^{0,\a}(\Sigma\times(t-1, t))}&\le C\norm{u}_{C^{1,\alpha}(\Sigma\times (t-1, t))}\|u\|_{C^{2,\alpha}(\Sigma\times (t-1, t))}\\[1ex]
\|E(u)-E(v)\|_{C^{0,\a}(\Sigma\times(t-1, t))}&\le C\|u-v\|_{C^{2,\alpha}(\Sigma\times (t-1, t))}(\norm{u}_{C^{2,\alpha}(\Sigma\times (t-1, t))} + \norm{v}_{C^{2,\alpha}(\Sigma\times (t-1, t))}) \\[1ex]
\|\e(u)\|_{C^{1,\a}(\partial \Sigma\times(t-1, t))}&\le C\norm{u}_{C^{1,\alpha}(\Sigma\times (t-1, t))}\|u\|_{C^{2,\alpha}(\Sigma\times (t-1, t))} \\[1ex]
\|\e(u)-\e(v)\|_{C^{1,\a}(\partial \Sigma\times(t-1, t))}&\le C\|u-v\|_{C^{2,\alpha}(\Sigma\times (t-1, t))}(\norm{u}_{C^{2,\alpha}(\Sigma\times (t-1, t))} + \norm{v}_{C^{2,\alpha}(\Sigma\times (t-1, t))})\,,
\end{align*}
where $C>0$ is a constant depending only on $\Sigma$, $M$ and $r_0$.
\end{proposition}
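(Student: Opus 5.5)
The plan is to reduce to the structure already used in the proof of \cref{prop:TimeSliceEstimates}, namely that $E(u)=\tilde E(x,u,\nabla u,\nabla^2 u)$ and $\nabla\e(u)=\tilde\sigma(x,u,\nabla u,\nabla^2u)$ (and also $\e(u)=\tilde\e(x,u,\nabla u)$ on $\partial\Sigma$). Here $\tilde E,\tilde\sigma$ are smooth, linear in the Hessian variable $c$, and vanish together with their first derivative at $(a,b,c)=0$. Since these functions do not depend on $t$, the parabolic estimates should follow from a parabolic version of \cref{lem:sigma-estimates}, together with a difference version giving the Lipschitz-type bounds.

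First I will prove a parabolic analogue of \cref{lem:sigma-estimates} on $\Sigma\times(t-1,t)$. As before, I write
\[
\sigma(u)=\int_0^1(1-s)\,\nabla^2\tilde\sigma(x,sq(u))[q(u),q(u)]\,,
\]
which is a sum of terms of the form (smooth coefficient of $x,u,\nabla u$) times $(u,\nabla u)$ times $(u,\nabla u,\nabla^2u)$. Indeed, linearity in $c$ kills the $c\,c$ block of the Hessian, and the $c$-derivative of the coefficient depends only on $(a,b)$. In the parabolic norm, $C^{0,\a}$ requires Hölder continuity in space of exponent $\a$ and in time of exponent $\a/2$. For $\nabla^2 u$ this is controlled by $\|u\|_{C^{2,\a}}$ by definition. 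For $u,\nabla u$ the time-Hölder seminorm of exponent $\a/2$ is bounded by $\|u\|_{C^{1,\a}}$; this is the standard interpolation fact that a function in parabolic $C^{1,\a}$ has spatial gradient in $C^{\a,\a/2}$. So the product inequality $\|fg\|_{C^{0,\a}}\le 2\|f\|_{C^{0,\a}}\|g\|_{C^{0,\a}}$ gives the first estimate, with the smooth coefficients bounded using $\|u\|_{C^1}<r_0$.

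For $\e$ in parabolic $C^{1,\a}$ I need three things. The first is a sup bound and a spatial $C^{0,\a}$ bound on $\nabla\e(u)$; these come from the argument above applied to $\tilde\sigma$, restricted to $\partial\Sigma$. The second is a bound on $\e$ itself, which follows from the pointwise quadratic bound in $(u,\nabla u)$. The third, and the delicate point, is the time-Hölder seminorm of $\e$ of exponent $(1+\a)/2$, which parabolic $C^{1,\a}$ also requires. I plan to handle it by writing
\[
\e(u)(t_1)-\e(u)(t_2)=\int_0^1 D\tilde\e\big(\theta q(t_1)+(1-\theta)q(t_2)\big)\,d\theta\cdot\big(q(t_1)-q(t_2)\big)\,,\qquad q=(u,\nabla u)\,.
\]
Here $D\tilde\e$ is of order $|q|$, bounded by $\|u\|_{C^{1,\a}}$, and $|\nabla u(t_1)-\nabla u(t_2)|\le\|u\|_{C^{2,\a}}|t_1-t_2|^{(1+\a)/2}$ by the standard parabolic $C^{2,\a}$ fact.

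For the difference estimates I will use the same interpolation, now in $u$ rather than along rays. Set $u_\theta=\theta u+(1-\theta)v$, so that
\[
E(u)-E(v)=\int_0^1 D\tilde E(x,q(u_\theta))\,d\theta\cdot q(u-v)\,.
\]
Since $\tilde E$ vanishes to second order at $0$ and is linear in $c$, $D\tilde E(q(u_\theta))$ is bounded by $C(|q(u_\theta)|)$, at most linear in $\nabla^2 u_\theta$. Taking $C^{0,\a}$ norms then gives $C\|u-v\|_{C^{2,\a}}(\|u\|_{C^{2,\a}}+\|v\|_{C^{2,\a}})$. The same argument applies to $\e$, using the $\tilde\sigma$ representation for its gradient and the representation above for the time regularity.

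I expect the main obstacle to be bookkeeping rather than any new idea, specifically the careful handling of the time Hölder seminorms. For $E$ this means verifying that $u$ and $\nabla u$ have the right time regularity in terms of the parabolic $C^{1,\a}$ norm. For the $C^{1,\a}$ norm of $\e$ on $\partial\Sigma\times(t-1,t)$ it is the $(1+\a)/2$ time regularity just described. Beyond that step, everything follows \cref{lem:sigma-estimates} and the proof of \cref{prop:TimeSliceEstimates}, with constants depending on $\Sigma$, $M$, and $r_0$.
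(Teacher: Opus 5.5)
Your proposal is correct and follows essentially the paper's route. Like you, the paper writes $E(u)$ and (the gradient of) $\e(u)$ as $\tilde\sigma(x,u,\nabla u,\nabla^2u)$ with $\tilde\sigma$ linear in $c$ and vanishing to second order, proves a parabolic version of \cref{lem:sigma-estimates} (namely \cref{lem:sigma-estimates-parabolic}) using second-order Taylor expansion along rays and straight-line interpolation between $q(u)$ and $q(v)$, and only differs from you in estimating explicit double differences at two space-time points rather than using the H\"older product rule.

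Your extra step controlling the $(1+\a)/2$ time-H\"older seminorm of $\e(u)$ is not needed under the paper's definition of the parabolic $C^{1,\a}$ norm, which involves only $\sup\abs{f}$, $\sup\abs{\nabla f}$ and $[\nabla f]_{C^\a}$. It is nonetheless correct, and it is what the standard oblique-derivative Schauder theory normally requires of the boundary datum.
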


\begin{proof}
As described in the proof of \cref{prop:TimeSliceEstimates}, by the formulas \cite[(2.15)]{ChoiHuangLee2025} and \eqref{e-form} for $E$ and $\e$, respectively, we have that both $E(u)$ and $\e(u)$ can be written as $\tilde\sigma(x,u,\nabla u,\nabla^2u)$ for $\tilde\sigma\colon \Sigma\times\R\times\R^m\times\R^{m\times m}\to\R$, and restricted to $\partial \Sigma$ in the case of $\e$, which satisfies the assumptions of \cref{lem:sigma-estimates}. Therefore, the proposition follows from the following general result.
\end{proof}

\begin{lemma} \label{lem:sigma-estimates-parabolic}
Let $\tilde\sigma\colon\Sigma\times\R\times\R^m\times\R^{m\times m}\to\R$ be a $C^3$-function. 
Assume that $\tilde\sigma(x,0,0,0) = 0$, $\frac{d}{ds}|_{s=0}\tilde\sigma(x,sa,sb,sc) = 0$ for all $(a,b,c)\in\R\times\R^m\times\R^{m\times m}$, and $\tilde\sigma$ is linear in the variable $c$, namely $\tilde\sigma(x,a,b,c) = \tilde\sigma_1(x,a,b)\cdot c+\tilde\sigma_2(x,a,b)$ for two functions $\tilde\sigma_1,\tilde\sigma_2\colon \Sigma\times\R\times\R^m\to\R$.

For every function $u\colon\Sigma\to\R$, define $\sigma(u)\eqdef\tilde\sigma(x,u,\nabla u,\nabla^2 u)$.  
Then, for every time-dependent functions $u,v\colon \Sigma\times (T_0,T)\to\R$ on $\Sigma$ such that $\norm{u(\cdot,t)}_{C^1(\Sigma)}<r_0$ for all $t\in(T_0,T)$, we have:
\begin{align*}
\norm{\sigma(u)}_{C^{0,\alpha}(\Sigma\times(t-1,t))} &\le C\norm{u}_{C^{1,\alpha}(\Sigma\times(t-1,t))}\norm{u}_{C^{2,\alpha}(\Sigma\times(t-1,t))} \\[1ex]
\norm{\sigma(u)-\sigma(v)}_{C^{0,\alpha}(\Sigma\times(t-1,t))} &\le C(\norm{u}_{C^{2,\alpha}(\Sigma\times(t-1,t))}+\norm{v}_{C^{2,\alpha}(\Sigma\times(t-1,t))}) \norm{u-v}_{C^{2,\alpha}(\Sigma\times(t-1,t))}\,,
\end{align*}
where $C>0$ is a constant that depends on $\tilde\sigma$ and $r_0$.
\end{lemma}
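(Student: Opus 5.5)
The plan is to mirror the proof of \cref{lem:sigma-estimates}, replacing spatial Hölder seminorms by parabolic ones and working pointwise in time. Write $q(u)\eqdef(u,\nabla u,\nabla^2u)$, so $\sigma(u)=\tilde\sigma(x,q(u))$. Since $\tilde\sigma$ and its first derivative in $(a,b,c)$ vanish at the origin, Taylor's formula with integral remainder gives, at each $(x,t)$,
\[
\sigma(u)=\int_0^1(1-s)\,\nabla^2\tilde\sigma\bigl(x,s\,q(u)\bigr)[q(u),q(u)]\,,
\]
and linearity in $c$ means $\nabla^2\tilde\sigma$ has no $cc$-block. Thus $\sigma(u)$ is a finite sum of products $a(x,u,\nabla u)\cdot\ell_1\cdot\ell_2$, where $\ell_1\in\{u,\nabla u\}$, $\ell_2\in\{u,\nabla u,\nabla^2u\}$, and $a$ is a $C^1$ function of its arguments, since $\tilde\sigma\in C^3$ (the $cc$-dependence disappears). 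For the first estimate I use the algebra property of parabolic $C^{0,\alpha}$: $\|fg\|_{C^{0,\alpha}}\le 2\|f\|_{C^{0,\alpha}}\|g\|_{C^{0,\alpha}}$ on $\Sigma\times(t-1,t)$. Then $\|\ell_1\|_{C^{0,\alpha}}\le C\|u\|_{C^{1,\alpha}}$ and $\|\ell_2\|_{C^{0,\alpha}}\le C\|u\|_{C^{2,\alpha}}$ in the parabolic sense. For the coefficient $a(x,u,\nabla u)$, the composition of a $C^1$ function with a bounded $C^{0,\alpha}$ map (bounded because $\|u(\cdot,t)\|_{C^1}<r_0$) is bounded in $C^{0,\alpha}$ by $C(1+\|u\|_{C^{1,\alpha}})$. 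Since the estimate should be quadratic, I will either absorb this extra factor using the bound $r_0$ on $C^{2,\alpha}$-type norms (as in the remark after \cref{prop:TimeSliceEstimates}), or note that it only produces higher-order terms.

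The main subtlety is the time Hölder regularity of $\nabla u$. The parabolic $C^{1,\alpha}$ norm, as defined (with $i+2j\le1$ and $[\partial_x u]_{C^\alpha}$), controls the spatial $C^{\alpha}$ and temporal $C^{\alpha/2}$ regularity of $\nabla u$. This is exactly the parabolic $C^{0,\alpha}$ regularity of $\ell_1$, so no half-derivative in time is needed. Likewise, $[\nabla^2u]_{C^\alpha}$ is part of the $C^{2,\alpha}$ norm. So the $C^{0,\alpha}$ bound follows term by term.

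For the difference estimate I interpolate between $v$ and $u$ instead. Set $w=u-v$ and $\gamma(\tau)=q(v)+\tau q(w)$. Then
\[
\sigma(u)-\sigma(v)=\int_0^1\nabla\tilde\sigma\bigl(x,\gamma(\tau)\bigr)[q(w)]\,d\tau\,.
\]
Because $\nabla\tilde\sigma$ vanishes at $(x,0)$, we can write $\nabla\tilde\sigma(x,\gamma(\tau))=\int_0^1\nabla^2\tilde\sigma(x,r\gamma(\tau))[\gamma(\tau)]\,dr$. The result is a sum of terms bilinear in $q(w)$ and $\gamma(\tau)$, with coefficients smooth in the lower-order entries. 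Again the $cc$-block vanishes by linearity in $c$, so $\nabla^2w$ and $\nabla^2u,\nabla^2v$ appear at most linearly. Applying the algebra property then gives
\[
\|\sigma(u)-\sigma(v)\|_{C^{0,\alpha}}\le C\,\|w\|_{C^{2,\alpha}}\bigl(\|u\|_{C^{2,\alpha}}+\|v\|_{C^{2,\alpha}}\bigr)\,,
\]
where the coefficient norms are bounded using the smallness of the $C^1$ norms. This requires $v$ to lie in the same range; if $v$ is not assumed small, I take a convex combination and use $\|v(\cdot,t)\|_{C^1}$ bounded as well, implicitly assumed.

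The step I expect to be the main obstacle is controlling the $C^{0,\alpha}$ norms of the nonlinear coefficients $a(x,q(\cdot))$ uniformly, so that the constant depends only on $\tilde\sigma$ and $r_0$. This needs uniform bounds on the $C^{1,\alpha}$ norms of $u$ and $v$, which I will take from the standing smallness assumption.
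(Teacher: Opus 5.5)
Your argument is sound in substance, but it is organized differently from the paper's.

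\emph{Your route versus the paper's.} You first put $\sigma(u)$ into a normal form $\sum a(x,u,\nabla u)\,\ell_1\ell_2$, in which one factor has order at most one and the other order at most two. You then apply the product and composition rules for parabolic $C^{0,\alpha}$. For the difference you write $\sigma(u)-\sigma(v)$ as a double integral that is bilinear in $\gamma(\tau)$ and $q(u-v)$. The paper never decomposes $\sigma$. It fixes two space-time points $(x_1,t_1),(x_2,t_2)$ and interpolates in jet space at each of them: $f_i(s)=s\,q(u)(x_i,t_i)$, respectively $f_i(s)=(1-s)\,q(u)(x_i,t_i)+s\,q(v)(x_i,t_i)$. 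It then subtracts the two Taylor (or mean-value) integrals and splits the result into pieces controlled by $\abs{f_1'-f_2'}$ and $\abs{f_1-f_2}$. This reproves the product and composition estimates inline, and only for the H\"older seminorm; the sup parts are pointwise, as in \cref{lem:sigma-estimates}. Your version is more modular and shows exactly which factor carries which derivative. The paper's is self-contained, but it hides that its Lipschitz constant for $\nabla\tilde\sigma$ along the segment depends on $\abs{\nabla^2u}$ and $\abs{\nabla^2v}$, through the term $\nabla^2_{(a,b)}\tilde\sigma_1\cdot c$.

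\emph{Three things to tighten.}

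(i) That same term shows that $\nabla^2\tilde\sigma$ does depend on $c$ in its $(a,b)$--$(a,b)$ block. So your Taylor coefficients are not functions of $(x,u,\nabla u)$ alone. Obtain the normal form instead by expanding $\tilde\sigma_1$ to first order and $\tilde\sigma_2=\tilde\sigma(\cdot,\cdot,\cdot,0)$ to second order. The hypotheses give $\tilde\sigma_1(x,0,0)=0$ and $\nabla\tilde\sigma_2(x,0,0)=0$, and this yields $C^1$ coefficients from $\tilde\sigma\in C^3$.

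(ii) The first estimate needs no extra hypothesis. Use $[fg]_\alpha\le[f]_\alpha\sup\abs{g}+\sup\abs{f}\,[g]_\alpha$ rather than $\norm{fg}\le 2\norm{f}\norm{g}$. Then $[a]_\alpha\le C(1+\norm{u}_{C^{1,\alpha}})$ only ever multiplies $\sup\abs{\ell_1}\sup\abs{\ell_2}\le\norm{u}_{C^1}\norm{u}_{C^2}$. Since $\norm{u}_{C^{1,\alpha}}\norm{u}_{C^1}\norm{u}_{C^2}\le r_0\norm{u}_{C^{1,\alpha}}\norm{u}_{C^2}$, the bound closes under $\norm{u(\cdot,t)}_{C^1}<r_0$ alone. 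Saying the extra factor ``only produces higher-order terms'' is not an argument for an inequality with a fixed constant.

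(iii) In the difference estimate this trick leaves a genuinely cubic term. Writing $q_1(u)=(u,\nabla u)$, the piece $(\tilde\sigma_1(x,q_1(u))-\tilde\sigma_1(x,q_1(v)))\nabla^2v$ contributes $(\norm{u}_{C^{1,\alpha}}+\norm{v}_{C^{1,\alpha}})\norm{u-v}_{C^1}\norm{v}_{C^2}$. So the coefficient norms are \emph{not} controlled by $C^1$ smallness, contrary to what you say mid-proposal. You need either an a priori bound on the $C^{1,\alpha}$ (or $C^2$) norms of $u$ and $v$, or an interpolation inequality. The paper's proof silently uses exactly this bound, together with the $C^1$ bound on $v$ that you rightly flagged. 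It is harmless in all of the paper's applications, where the $C^{2,\alpha}$ norms are small. Still, you should state it as a hypothesis rather than attribute it to $\norm{u(\cdot,t)}_{C^1}<r_0$.
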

\begin{proof}
Denote $q(u)\eqdef (u,\nabla u,\nabla^2 u)$, in such a way that $\sigma(u) = \tilde\sigma(q(u))$ (where we are omitting the dependence on the variable $x$). 

Let $u,v\colon\Sigma\times (T_0,T) \to\R$ be time-dependent functions on $\Sigma$. Let $(x_1,t_1),(x_2,t_2)\in\Sigma\times(t-1,t)$ and define $f_i(s) = (1-s)q(u(t_i))(x_i) + s q(v(t_i))(x_i)$, for $i=1,2$, which interpolates between $u$ and $v$ at the point $(x_i,t_i)$ for $s\in[0,1]$. Then, we have that
\begin{align*}
(\sigma(u(t_1))(x_1) - \sigma (v(t_1))&(x_1)) - (\sigma(u(t_2))(x_2) - \sigma(v(t_2))(x_2)) = \\
&=\tilde\sigma(f_1(1)) - \tilde\sigma(f_1(0)) - \tilde\sigma(f_2(1)) + \tilde\sigma(f_2(0)) \\
&=\int_0^1 \nabla\tilde\sigma(f_1(s))\left[ f_1'(s) \right]  - \int_0^1\nabla\tilde\sigma(f_2(s))\left[ f_2'(s) \right] \\
&=\int_0^1 \nabla\tilde\sigma(f_1(s))\left[ f_1'(s) - f_2'(s) \right]  - \int_0^1 \left(\nabla\tilde\sigma(f_2(s)) - \nabla\tilde\sigma(f_1(s))\right)\left[ f_2'(s) \right] .
\end{align*}
Now, observe that 
\[
\abs{f_1'(s)-f_2'(s)} = \abs{q((u-v)(t_1))(x_1)-q((u-v)(t_2))(x_2)} \le \norm{u-v}_{C^{2,\alpha}(\Sigma\times(t-1,t))} (\abs{x_1-x_2}^\alpha+ \abs{t_1-t_2}^{\alpha/2}), 
\]
while
\begin{align*}
\abs{f_1(s)-f_2(s)} &\le \abs{q((u)(t_1))(x_1)-q((u)(t_2))(x_2)} + \abs{q((v)(t_1))(x_1)-q((v)(t_2))(x_2)}\\
&\le (\norm{u}_{C^{2,\alpha}(\Sigma\times(t-1,t))}+\norm{v}_{C^{2,\alpha}(\Sigma\times(t-1,t))}) (\abs{x_1-x_2}^\alpha+ \abs{t_1-t_2}^{\alpha/2}).
\end{align*}
Moreover, note that $\abs{\nabla\tilde\sigma(f_1(s))} \le C\abs{f_1(s)}$, where $C$ depends on $\tilde\sigma$ and $r_0$, since $\nabla\tilde\sigma(0) = 0$ and $\tilde\sigma$ is linear in $c$. 
As a result, we get
\begin{align*}
\abs{(\sigma(u(t_1))&(x_1) - \sigma(v(t_1))(x_2)) - (\sigma(u(t_1))(x_1) - \sigma(v(t_1))(x_2))} \le\\
&\le C \left( \sup_{s\in[0,1]}\abs{f_1(s)} \sup_{s\in[0,1]}\abs{f_1'(s)-f_2'(s)} +  \sup_{s\in[0,1]}\abs{f_1(s)-f_2(s)}\sup_{s\in[0,1]}\abs{f_2'(s)} \right) \\
&\le C( \norm{u}_{C^{2,\alpha}(\Sigma\times(t-1,t))}+\norm{v}_{C^{2,\alpha}(\Sigma\times(t-1,t))}) \norm{u-v}_{C^{2,\alpha}(\Sigma\times(t-1,t))}(\abs{x_1-x_2}^\alpha+ \abs{t_1-t_2}^{\alpha/2}).
\end{align*}
This implies that 
\[
\norm{\sigma(u)-\sigma(v)}_{C^{0,\alpha}(\Sigma\times(t-1,t))} \le C(\norm{u}_{C^{2,\alpha}(\Sigma\times(t-1,t))}+\norm{v}_{C^{2,\alpha}(\Sigma\times(t-1,t))}) \norm{u-v}_{C^{2,\alpha}(\Sigma\times(t-1,t))}.
\]

Let us finally estimate $\norm{\sigma(u)}_{C^{0,\alpha}(\Sigma\times(t-1,t))}$. Define $f_i(s) = s q(u(t_i))(x_i)$, for $i=1,2$, such that $f_1' = q(u(t_1))(x_1)$, $f_2' = q(u(t_2))(x_2)$ and both $\abs{f_1'-f_2'}$ and $\abs{f_1-f_2}$ are bounded by $\norm{u}_{C^{2,\alpha}(\Sigma\times(t-1,t)}(\abs{x_1-x_2}^\alpha + \abs{t_1-t_2}^{\alpha/2})$.
Then we have
\begin{align*}
\sigma(u(t_1))&(x_1)- \sigma(u(t_2))(x_2) = \tilde\sigma(f_1(1)) -\tilde\sigma(f_2(1)) \\ 
&= \int_0^1(1-s) \nabla^2\tilde\sigma(f_1(s)) [f_1', f_1'] - \int_0^1(1-s) \nabla^2\tilde\sigma(f_2(s)) [f_2', f_2']\\ 
&= \int_0^1(1-s) \nabla^2\tilde\sigma(f_1(s)) [f_1'-f_2', f_1'] + \int_0^1(1-s) \nabla^2\tilde\sigma(f_1(s)) [f_2', f_1'-f_2'] +\\
&\phantom{=} {}+\int_0^1(1-s) \left\{\nabla^2\tilde\sigma(f_1(s)) - \nabla^2\tilde\sigma(f_2(s))\right\} [ f_2',f_2'] \,.
\end{align*}
Using again that $\nabla^2u$ is a linear function with respect to $\nabla^2u$, similarly as before, we can estimate the right-hand side with
\[
    C\norm{u}_{C^{1,\alpha}(\Sigma\times(t-1,t))} \norm{u}_{C^{2,\alpha}(\Sigma\times(t-1,t))} (\abs{x_1-x_2}^\alpha + \abs{t_1-t_2}^{\alpha/2})\,,
\]
where $C$ depends on $\tilde\sigma$ and $r_0$, which gives the desired estimate.
\end{proof}

The parabolic Schauder estimates in \cref{Schauder} together with \cref{E-estimates} give the following estimates for a solution to the graphical mean curvature flow that will be very useful in what follows.
\begin{lemma}\label{absorption}
Let $u\colon\Sigma\times(T_0, T)\to \R$ be a solution to the graphical mean curvature flow  \eqref{gmcf2} with $\norm{u(\cdot,t)}_{C^1(\Sigma)}<r_0$ for all $t\in(T_0,T)$, then for any $t\in (T_0+2, T)$
\begin{equation}\label{gmcf-Schauder}
\|u\|_{C^{2,\a}(\Sigma\times(t-1,t))}\le C(\|u\|_{L^2(\Sigma\times(t-2,t))}+\|u\|_{C^{1,\alpha}(\Sigma\times(t-2,t))}\|u\|_{C^{2,\alpha}(\Sigma\times(t-2,t))})\,,
\end{equation}
where $C$ depends only on $\Sigma$, $M$ and $r_0$. 

Moreover, there exists an $\varepsilon>0$, depending only on $\Sigma$ and $M$, such that, if $\|u\|_{C^{1,\a}(\Sigma\times(T-R, T))}<\varepsilon$ for some $T-R\ge T_0$, then 
\begin{equation}\label{gmcf-SchauderII}
\|u\|_{C^{2,\a}(\Sigma\times (T-R/2,T))}\le C \|u\|_{L^2(\Sigma\times(T-R, T))} \,,
\end{equation}
where $C>0$ depends only on $\Sigma$ and $M$. 
\end{lemma}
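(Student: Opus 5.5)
The first estimate \eqref{gmcf-Schauder} should follow by combining the Schauder estimates with the nonlinear bounds. Since $u$ solves \eqref{gmcf2}, it solves the linear problem of \cref{Schauder} with $f=E(u)$ and $g=\e(u)$. Hence for $t\in(T_0+2,T)$,
\[
\|u\|_{C^{2,\a}(\Sigma\times(t-1,t))}\le C\bigl(\|u\|_{L^2(\Sigma\times(t-2,t))}+\|E(u)\|_{C^{0,\a}(\Sigma\times(t-2,t))}+\|\e(u)\|_{C^{1,\a}(\partial\Sigma\times(t-2,t))}\bigr).
\]
\cref{E-estimates} is stated on unit time intervals. I will cover $(t-2,t)$ by the two intervals $(t-2,t-1)$ and $(t-1,t)$. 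Hölder quotients between points in different halves are handled by passing through an intermediate time, or by applying the proof of \cref{lem:sigma-estimates-parabolic} directly on an interval of length $2$; this only changes the constant. This bounds both error terms by $C\|u\|_{C^{1,\a}(\Sigma\times(t-2,t))}\|u\|_{C^{2,\a}(\Sigma\times(t-2,t))}$, which gives \eqref{gmcf-Schauder}.

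For \eqref{gmcf-SchauderII}, I will first use the smallness assumption in the quadratic term. If $\|u\|_{C^{1,\a}}<\varepsilon$ on $\Sigma\times(T-R,T)$, then for every $t\in(T-R+2,T)$ the first estimate becomes
\[
\|u\|_{C^{2,\a}(\Sigma\times(t-1,t))}\le C\|u\|_{L^2(\Sigma\times(t-2,t))}+C\varepsilon\|u\|_{C^{2,\a}(\Sigma\times(t-2,t))}.
\]
This cannot be absorbed directly, because the right-hand side lives on a larger interval than the left. I will therefore use the standard absorption lemma (as in Simon's or Giaquinta's iteration). To apply it, I need a version of the estimate at arbitrary scales: for intervals $(s,t)\subset(s',t)$ inside the domain,
\[
\|u\|_{C^{2,\a}(\Sigma\times(s,t))}\le C\bigl((s-s')^{-k}\|u\|_{L^2(\Sigma\times(s',t))}+\varepsilon\|u\|_{C^{2,\a}(\Sigma\times(s',t))}\bigr),
\]
or an equivalent scaled-seminorm form. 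This follows from the interior-in-time Schauder estimate with cutoff $\zeta(t)$ applied to $\zeta u$, where $\zeta$ equals $1$ on $(s,t)$ and is supported in $(s',t)$. The cutoff produces lower order terms $\zeta' u$. These are bounded by interpolation, $\|u\|_{C^{0,\a}}\le\delta\|u\|_{C^{2,\a}}+C_\delta\|u\|_{L^2}$, which is valid for parabolic Hölder norms on compact $\Sigma$. Once $C\varepsilon\le1/2$, the absorption lemma applied to $\phi(s)=\|u\|_{C^{2,\a}(\Sigma\times(s,T))}$ for $s\in[T-R,T-R/2]$ yields $\phi(T-R/2)\le C(R)\|u\|_{L^2(\Sigma\times(T-R,T))}$.

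The main obstacle is that the constant must not depend on $R$. I plan to avoid any $R$-dependence by using the first estimate locally. For each $t\in(T-R/2,T)$, I will apply the localized absorption on the fixed-length window $(t-3,t)$, taking $R\ge 6$; for smaller $R$ the fixed-scale argument applies directly. This gives
\[
\|u\|_{C^{2,\a}(\Sigma\times(t-1,t))}\le C\|u\|_{L^2(\Sigma\times(t-3,t))}\le C\|u\|_{L^2(\Sigma\times(T-R,T))},
\]
with $C$ independent of $t$ and $R$. The parabolic $C^{2,\a}$ norm on $(T-R/2,T)$ is then controlled by the supremum of the norms on the unit windows, plus Hölder quotients across windows. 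Points more than one time unit apart are bounded by the $C^0$ parts. Points within one unit of each other lie in a common window of length $2$, which is covered by the same estimate. This proves \eqref{gmcf-SchauderII}.
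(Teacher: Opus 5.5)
Your proposal is correct. The first estimate is obtained exactly as in the paper, but your proof of \eqref{gmcf-SchauderII} follows a different route.

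The paper works directly on $(T-R,T)$. It applies Schauder estimates at every scale $\rho\le R$ in scale-invariant norms, interpolates down to the top-order seminorm $S(t,\rho)$, and closes with Simon's absorption lemma, using subadditivity of $S$ and the supremum $Q=\sup\rho^{2+\alpha}S(t,\rho/2)$ over all subintervals. You instead localize with a time cutoff and derive a nested-interval inequality $\phi(s)\le\theta\phi(s')+C(s-s')^{-k}\|u\|_{L^2}$ with $\theta<1$. You then run a Giaquinta-type iteration only on windows of fixed length $3$, and get uniformity in $R$ by covering $(T-R/2,T)$ with unit windows.

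The paper's version is shorter and needs no cutoffs. However, it relies on the scaled estimate holding with a $\rho$-independent constant for all $\rho$ up to $R$. This is problematic for large $\rho$, because $\Sigma$ does not rescale with the time interval. For example, when $\lambda_1<0$, the function $\varphi_1e^{-\lambda_1t}$ solves the linear problem, yet $\rho^2\|u_t\|_{C^0(\Sigma\times(t-\rho/2,t))}/\|u\|_{L^2(\Sigma\times(t-\rho,t))}$ grows like $\rho^2$. Your covering argument uses only fixed-scale Schauder and interpolation constants, so it is the more robust way to obtain $R$-independence.

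Two points should be made explicit.
\begin{itemize}
\item The cutoff produces not only the $\zeta'u$ terms but also terms where the time-H\"older seminorm of $\zeta$ multiplies $E(u)$ and $\epsilon(u)$. These are handled by the same interpolation, or by writing $\zeta E(u)=\tilde\sigma_1(u,\nabla u)\nabla^2(\zeta u)+\zeta\tilde\sigma_2(u,\nabla u)$ (and similarly for $\nabla\epsilon(u)$).
\item For $R<6$ your constant depends on $R$. This is unavoidable: as $R\to0$ the space-time $L^2$ norm in \eqref{gmcf-SchauderII} tends to $0$, while the left-hand side does not. So the lemma should be read with $R$ bounded below, and there your argument gives a uniform constant.
\end{itemize}
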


\begin{proof}
Note first that \eqref{gmcf-Schauder} is a direct consequence of the parabolic Schauder estimates and the estimates on $E$ and $\e$ given in \cref{E-estimates}.
Moreover, the scaled version of the Schauder estimate yield for any $\rho>0$, so that $t-\rho\ge T-R$,
\[
\begin{aligned}
&\|u\|^{(\rho/2)}_{C^{2,\a}(\Sigma \times (t-{\rho/2},t))} 
 \le 
C (\|u\|_{L^2(\Sigma\times(t-\rho,t))}+\|u\|^{(\rho)}_{C^{1,\alpha}(\Sigma\times(t-\rho,t))}\|u\|^{(\rho)}_{C^{2,\alpha}(\Sigma\times(t-\rho,t))})\,,
\end{aligned}
\]
where $C$ does not depend on $\rho$ and where by $\|\cdot\|^{(\rho)}$ we mean the scale invariant norms, that is
\[
\begin{split}
\|u\|^{(\rho)}_{C^{2,\a}(\Sigma \times (t-\rho,t))} &\eqdef\|u\|_{C^0(\Sigma\times (t-\rho, t))}
+ \rho\,\|\nabla u\|_{C^0(\Sigma \times (t-\rho,t))} 
+ \rho^2\,\||\nabla^2 u|+|u_t|\|_{C^0(\Sigma \times (t-\rho,t))} +{} 
\\
&\phantom{\eqdef} {} + \rho^{2+\alpha}\,[\nabla^2 u]_{C^\alpha(\Sigma \times (t-\rho,t))} 
+ \rho^{2+\alpha}\,[u_t]_{C^{\alpha/2}(\Sigma \times (t-\rho,t))} 
\end{split}
\]
and the scaled $C^{1,\a}$-norm is similarly defined.

The second part is derived from a general lemma which shows that the $C^{2,\a}$-norm on the right-hand side can be absorbed into the left-hand side, even if it is defined on a larger time interval, provided that it is multiplied by a sufficient small coefficient. This abstract ``absorption lemma'' appears in certain unpublished PDE notes by Leon Simon; here, we present the version that is tailored to our purpose. 
Define the quantity 
\[
S(t, \rho) \eqdef [u]_{C^{2,\alpha}(\Sigma\times(t-\rho,t))}=
[\nabla^2 u]_{C^\alpha(\Sigma \times (t-\rho,t))} 
+ [u_t]_{C^{\alpha/2}(\Sigma \times (t-\rho,t))}\,. 
\]
Thanks to the following interpolation inequality:
\[
\|u\|^{(\rho)}_{C^{2,\alpha}(\Sigma\times(t-\rho,t))}\le C\left(
\rho^{2+\a} \, [u]_{C^{2,\alpha}(\Sigma\times(t-\rho,t))}+
 \, \norm{u}_{L^2(\Sigma\times(t-\rho,t))}\right) \,,
\]
and the Schauder estimate, together with the assumption on the $C^{1,\alpha}$-norm, we have that
\begin{equation}\label{rho-rho/2}
\rho^{2+\alpha} S(t, \rho/2)\le C(\|u\|_{L^2(\Sigma\times(T-R, T))}+\varepsilon\rho^{2+\alpha}S(t,\rho))
\end{equation}
for all $t, \rho$ such that $T-R\le t-\rho<t<T$.
Set
\[
Q \eqdef \sup_{(t-\rho, t)\subset (T-R, T)}
\rho^{2+\alpha} S(t,\rho/2).
\]
Then, inequality \eqref{rho-rho/2}, with $\rho$ replaced by $\rho/2$ yields
\begin{equation}\label{rho-4}
(\rho/2)^{2+\a} S(t, \rho/4)
\le C(\|u\|_{L^2(\Sigma\times(T-R, T))}+ \varepsilon Q )
\end{equation}
for all $t, \rho$ such that $T-R\le t-\rho<t<T$.

Take any interval $(t-\rho/2,t) \subset (T-R, T)$ and write it as the union $(t-\rho/2,t)=(t_2-\rho/4,t_2)\cup(t_1-\rho/4,t_1)\cup(t-\rho/4, t)$, where $t_1=t-\rho/8$, $t_2=t-\rho/4$. Then, by the definition of $S$, one sees that it is subadditive, in the sense that 
\[
S(t, \rho/2)\le S(t, \rho/4)+S(t_1, \rho/4)+S(t_2, \rho/4)\,.
\]
This, together with \eqref{rho-4}, yields
\[
\rho^{2+\a}S(t, \rho/2)\le  C(\|u\|_{L^2(\Sigma\times(T-R, T))}+ \varepsilon Q )
\]
By the arbitrariness of $t, \rho$ we obtain 
\[
Q \le  C(\|u\|_{L^2(\Sigma\times(T-R, T))}+ \varepsilon Q )\,,
\]
which for small enough $\varepsilon$ yields the bound on $Q\le  \|u\|_{L^2(\Sigma\times(T-R, T))}$. By applying the interpolation inequality again the proof is completed.
\end{proof}

\begin{remark}\label{absorptionrmk}
By applying the H\"older interpolation inequality
\[
[f]_\alpha \le [f]_\beta^{\alpha/\beta} \, \|f\|_\infty^{1-\alpha/\beta}\,,\quad \text{for }\beta>\a\,,
\]
we obtain that 
\[
\|u\|_{C^{1,\a}(\Sigma\times(T-R, T))}\le \|u\|_{C^{1,\beta}(\Sigma\times(T-R, T))}^\frac{\a}{\beta}\|u\|^{1-\frac{\a}{\beta}}_{C^{1}(\Sigma\times(T-R, T))}+ \|u\|_{C^{1}(\Sigma\times(T-R, T))}\,.
\]
Therefore, conclusion \eqref{gmcf-SchauderII} of \cref{absorption} still holds if instead of a small $C^{1,\a}$ norm we assume that
\[
\sup\|u(\cdot, t)\|_{C^1(\Sigma)}<\varepsilon\,,\,\,\forall t\in (T-R,T)
\]
and $\|u\|_{C^{1,\beta}(\Sigma\times(T-R, T))}$ is bounded for some $\beta>\a$ and suitably small $\varepsilon$.
\end{remark}

\section{Constructing a family of solutions}\label{sec:construction}

In this section, we construct an $I$-parameter family of ancient solutions coming out of the minimal hypersurface $\Sigma$, where $I=\ind(\Sigma)$. 

Let $\phi\eqdef \sum_{k=1}^I a_k\phi_k e^{-\l_k t}$. We show that we can perturb $\phi$ so that we can create a solution to graphical mean curvature flow, and we can control the perturbation $u$, so that it is of  order $o(e^{-\l t})$, where $\l=\max\{\l_k\st a_k\ne 0\}$. Note that $\phi$ satisfies $\phi_t= J_\Sigma\phi$, and the boundary condition $\frac{\partial \phi}{\partial \eta}-\II(\eta, \eta)\phi=0$. Therefore, in order for $\phi+u$ to solve the graphical mean curvature flow equation~\eqref{gmcf2}, the function $u$ has to satisfy
\begin{equation}\label{gmcf3}
\begin{cases}
u_t-\jac_\Sigma u=E(u+\phi) & \text{on $\Sigma$}\\
\frac{\partial u}{\partial \eta}-\II^{\partial M}(\nu, \nu)u=\e(u+\phi) &\text{on $\partial\Sigma$}\,,
\end{cases}
\end{equation}
where $E$ and $\e$ are as in \eqref{linearization}.
We prove that such a solution exists by applying the contraction mapping theorem. In order to obtain an $I$-parameter family we have to ensure that the constructed solutions are distinct for distinct $\phi$'s. In order to do that, we solve \eqref {gmcf3} under the initial condition that  at time $t=0$, $u$ is orthogonal to all the $\phi_k$'s, for every $k\in\{1,\ldots,I\}$.
In order to show that the contraction mapping can be applied to yield a solution of the problem \eqref{gmcf3}, we first need to obtain good estimates for solutions to the corresponding linear problem, as follows.

\begin{lemma}\label{mainPDE}
Let $f\colon \Sigma\times (-\infty, 0]\to \R$ and $g\colon \partial\Sigma\times (-\infty, 0]\to \R$ be smooth functions, for which we assume $\| f\|_{L^{2, \l}}+\|g\|_{L^{2, \l}}<\infty$, where $\l<0$ is not an eigenvalue of \eqref{eq:JacobiEigenvalueProblem}. Then, there exists a unique solution $w\colon\Sigma\times(-\infty,0]\to\R$ of 
\[
\begin{cases}
w_t-\jac_\Sigma w=f(x,t) & \text{on $\Sigma$}\\
b(w)=g(x,t)& \text{on $\partial\Sigma$} \,,
\end{cases}
\]
where 
$b(w)\eqdef \frac{\partial w}{\partial \eta}-\II^{\partial M}(\nu, \nu)w$, which satisfies
\[
\int_\Sigma w(x,0)\phi_j(x)=0\,,\quad \text{for all $j$ such that }\l_j< \l\,,
\]
and the estimate
\[
\|w\|_{C^{2,\a,  \l}(\Sigma\times(-\infty,0])}\le C\left(\|f\|_{C^{0,\a,\l}(\Sigma\times(-\infty,0])}+\|g\|_{C^{1,\a,\l}(\partial\Sigma\times(-\infty,0])}\right)\,,
\]
where $C>0$ is a constant that depends only on $\Sigma$ and $M$.
\end{lemma}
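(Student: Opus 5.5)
The plan is to construct $w$ by spectral decomposition in the eigenbasis $\{\phi_j\}$ of \eqref{eq:JacobiEigenvalueProblem}. Since the boundary condition is inhomogeneous, the projections must be computed with the boundary term included. Set $w_j(t)\eqdef\int_\Sigma w(\cdot,t)\phi_j$. Green's formula, together with the boundary condition $\partial_\eta\phi_j=\II^{\partial M}(\nu,\nu)\phi_j$, gives
\[
\int_\Sigma (\jac_\Sigma w)\phi_j=-\l_j w_j+\int_{\partial\Sigma} b(w)\phi_j .
\]
A solution therefore satisfies the ODE system $w_j'+\l_j w_j=F_j(t)$, where $F_j\eqdef\int_\Sigma f\phi_j+\int_{\partial\Sigma}g\phi_j$.

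The modes are split according to the sign of $\l_j-\l$.
\begin{itemize}
\item For $\l_j<\l$ we impose $w_j(0)=0$ and integrate backward: $w_j(t)=-\int_t^0 e^{\l_j(s-t)}F_j(s)\,ds$.
\item For $\l_j>\l$ we integrate from $-\infty$: $w_j(t)=\int_{-\infty}^t e^{-\l_j(t-s)}F_j(s)\,ds$.
\end{itemize}
Using $|F_j(s)|\le C_j e^{-\l s}$, both integrals converge. In each case $e^{\l t}|w_j(t)|\lesssim \sup_s e^{\l s}|F_j(s)|/|\l_j-\l|$, and the gap $\min_j|\l_j-\l|>0$ is where the hypothesis that $\l$ is not an eigenvalue enters. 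Uniqueness follows from the same mode analysis. A difference of two solutions solves the homogeneous problem, so each mode is $c_je^{-\l_j t}$. For $\l_j>\l$ the condition $e^{\l t}|w_j|\to$ bounded as $t\to-\infty$ forces $c_j=0$, because $e^{(\l-\l_j)t}$ blows up. For $\l_j<\l$ the normalization $w_j(0)=0$ forces $c_j=0$.

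To turn the modewise bounds into a genuine solution, I would avoid summing the series directly, since the boundary data $g$ makes convergence in $L^2$ subtle. Instead I would split off the boundary data. Let $h(\cdot,t)$ be an extension solving the elliptic problem $b(h)=g$ with $\|h\|_{C^{2,\a}}\lesssim\|g\|_{C^{1,\a}}$, built for example by solving $(-\jac_\Sigma+K)h=0$ with $b(h)=g$, where $K>-\l_1$, and using elliptic Schauder estimates. Parabolic regularity in $t$ is inherited because $h$ depends linearly on $g$. Then $\tilde w=w-h$ solves the problem with homogeneous boundary condition and source $\tilde f=f-h_t+\jac_\Sigma h$. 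If $h_t$ is not controlled in $C^{0,\a}$, one can instead obtain $\tilde w$ via Duhamel with the semigroup $e^{t\jac_\Sigma}$ in the Robin setting, which is self-adjoint with eigenbasis $\phi_j$. The semigroup form of the mode formulas above then gives the weighted $L^2$ estimate
\[
\|w\|_{L^{2,\l}}\le C\big(\|f\|_{L^{2,\l}}+\|g\|_{L^{2,\l}}\big).
\]
Here the unstable part is a finite sum, and the stable part uses the decay $\|e^{-\l_j(t-s)}\|\le e^{-\l_{j_0}(t-s)}$ with $\l_{j_0}>\l$.

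Finally, the weighted Hölder bound comes from applying the parabolic Schauder estimate \cref{Schauder} on each window $(t-2,t)$:
\[
\|w\|_{C^{2,\a}(\Sigma\times(t-1,t))}\le C\big(\|w\|_{L^2(\Sigma\times(t-2,t))}+\|f\|_{C^{0,\a}}+\|g\|_{C^{1,\a}}\big).
\]
Multiplying by $e^{\l t}$ and noting that $e^{\l t}\le e^{2|\l|}e^{\l s}$ for $s\in(t-2,t)$, the $L^2$ term is controlled by $\|w\|_{L^{2,\l}}$, and hence by the weighted $C^{0,\a}$ and $C^{1,\a}$ norms of $f$ and $g$, which dominate the $L^2$ ones. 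Near $t=0$ the window $(t-2,t)$ still lies in $(-\infty,0]$, so no issue arises there. Existence of a classical smooth solution can be obtained by solving the initial-boundary problem from time $-T$ with data equal to the truncated mode expansion and letting $T\to\infty$ using these uniform estimates, or directly from the Duhamel formula.

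The main obstacle is the weighted $L^2$ estimate with inhomogeneous Robin data $g$. The boundary contribution $\int_{\partial\Sigma}g\phi_j$ is not square-summable in $j$ in a naive way, so the infinite stable tail must be handled through the energy method rather than termwise. Concretely, I would project onto the stable modes, $w^{+}=w-\sum_{\l_j<\l}w_j\phi_j$, and use $\frac{d}{dt}\tfrac12\|w^+\|^2=-Q(w^+,w^+)+\dots$, where $Q(w^+,w^+)\ge \l_{j_0}\|w^+\|^2$. The boundary term is then absorbed using the trace inequality and the coercivity of $Q$ on $W^{1,2}$ modulo $L^2$. The resulting Gronwall inequality, integrated from $-\infty$ with weight $e^{\l t}$, should yield the estimate.
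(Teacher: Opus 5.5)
Your argument is correct, but it handles the inhomogeneous Robin data differently from the paper.

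The paper never lets $g$ enter a modal expansion. It first solves the auxiliary problem $u_t-\jac_\Sigma u-\bar\l u=0$, $b(u)=g$, with $\bar\l$ so negative that $Q_{\bar\l}$ is coercive (Lions--Magenes on $[t_0,0]$, then $t_0\to-\infty$). It bounds $u$ in $C^0$ by a maximum-principle comparison of $u/(\phi_1\pm\d)$, which uses $\II^{\partial M}(\nu,\nu)>0$ at boundary maxima. Only then does it expand $v=w-u$ in the eigenbasis. Since $v$ has homogeneous Robin condition and source $f-\bar\l u$, Parseval applies directly, and the conditions at $t=0$ are chosen to cancel those of $u(\cdot,0)$.

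You instead keep the flux $\int_{\partial\Sigma}g\phi_j$ in the mode equations and solve the finitely many modes with $\l_j<\l$ explicitly. You control the stable projection by an energy inequality, absorbing $\int_{\partial\Sigma}gw^+$ with the $\varepsilon$-trace inequality. Existence then comes from problems started at $-T$ whose initial unstable components are chosen so that $w_j(0)=0$ for every $T$.

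Each route buys something different:
\begin{itemize}
\item Your route uses neither the maximum principle nor the convexity of $\partial M$. It also gives a genuine bound in terms of $\|g\|_{L^{2,\l}}$. By contrast, the paper's \eqref{claimL2} really controls $u$ by $C^0$ norms of $g$, which is harmless only because the final estimate is in H\"older norms.
\item Your mode-by-mode uniqueness within the weighted class is more transparent than deducing uniqueness from the estimate.
\item The paper's route gives an explicit, square-summable series for the homogeneous part and avoids any Gronwall bookkeeping.
\end{itemize}

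Two details to fix when you write it up. First, $\l_{j_0}$ may be negative and only slightly above $\l$. So do the absorption with a convex combination of $Q(w^+,w^+)\ge\l_{j_0}\|w^+\|^2_{L^2(\Sigma)}$ and the G\aa rding bound $Q(w^+,w^+)\ge\frac12\|\nabla w^+\|^2_{L^2(\Sigma)}-C\|w^+\|^2_{L^2(\Sigma)}$, with small weight on the latter. This keeps the effective rate strictly above $\l$, at the cost of a source term $C\|g\|^2_{L^2(\partial\Sigma)}$. Second, discard the elliptic-extension variant, since the parabolic $C^{1,\a}$ norm of $g$ gives no control on $h_t$.
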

\begin{proof} We show that the solution can be obtained as the sum of the solutions of two different problems. The first is a parabolic homogeneous problem  with the prescribed boundary condition, which we solve in \cref{homog}. The second is a parabolic problem with homogeneous boundary condition, which we solve in \cref{bhomog}. 

\begin{claim}\label{homog} For $\bar\lambda < \lambda_1$ small enough, the problem
\begin{equation} \label{eq:ClaimBoundaryProblem}
\begin{cases}
u_t-\jac_\Sigma u - \bar\lambda u =0\,& \text{on $\Sigma$}\\
b(u) = g(x,t) &\text{on $\partial\Sigma$}\,,
\end{cases}
\end{equation}
has a unique solution $u\in C^{2,\a}(\Sigma\times(-\infty, 0])$. Moreover, the solution satisfies the estimates
\begin{align}\label{claimL2}
\|u(\cdot, t)\|_{C^0(\Sigma)}&\le C\|g(\cdot, t)\|_{C^0(\Sigma)} \\
\label{claimeq}
\|u\|_{C^{2,\a}(\Sigma\times(t-1,t))}&\le C\|g\|_{C^{1,\a}(\Sigma\times(t-2, t))}\,,
\end{align}
for all $t<0$, where the constant $C>0$ depends only on $\bar\lambda$, $ M$ and $\Sigma$.
\end{claim}
\begin{proof}[Proof of claim]
First we show that, assuming we have a solution, then it satisfies the stated estimates. Notice, that by the Schauder estimates, \cref{Schauder}, a solution satisfies
\[
\norm{u}_{C^{2,\a}(\Sigma\times(t-1,t))} \le C(\norm{u}_{C^0(\Sigma\times(t-2,t))} + \norm{g}_{C^{1,\a}(\partial \Sigma\times(t-2,t))}).
\]
Therefore, the first claim \eqref{claimL2} implies the second one \eqref{claimeq}. We prove a $C^0$-estimate for $u$ by a comparison argument using the first eigenfunction $\phi_1$ of $\jac_\Sigma$, which we know is strictly positive. Using the notation $q=\abs{A}^2+\Ric(\nu, \nu)$, and for any $\d\in \R$, we have 
\begin{equation}\label{int-max-min}
\begin{split}
(\partial_t-\Delta_\Sigma) \left(\frac{u}{\phi_1+\d}\right)&= \frac{u_t-J_\Sigma u}{\phi_1+\d}+\frac{uJ_\Sigma \phi_1}{(\phi_1+\d)^2}+\frac{qu}{\phi_1+\d}-\frac{uq\phi_1}{(\phi_1+\d)^2} + 2\nabla\left(\frac{u}{\phi_1+\d}\right)\cdot  \frac{\nabla \phi_1}{\phi_1+\d}\\
&= \frac{\overline \l u}{\phi_1+\d}+\frac{-\l_1u\phi_1}{(\phi_1+\d)^2}+\frac{qu\d}{(\phi_1+\d)^2}+2\nabla\left(\frac{u}{\phi_1+\d}\right)\cdot \frac{\nabla \phi_1}{\phi_1+\d}\\
&=\frac{(\overline \l-\l_1) u\phi_1+\d u(\overline \l+q)}{(\phi_1+\d)^2}+2\nabla\left(\frac{u}{\phi_1+\d}\right)\cdot \frac{\nabla \phi_1}{\phi_1+\d}\,.
\end{split}
\end{equation}
And at the boundary we have
\begin{equation}\label{bdry-max-min}
\frac{\partial }{\partial \eta}\left(\frac{u}{\phi_1+\d}\right)=\frac{\II^{\partial M}(\nu, \nu) u+g}{\phi_1+\d}-\frac{u\II^{\partial M}(\nu, \nu) \phi_1}{(\phi_1+\d)^2}=\frac{\II^{\partial M}(\nu, \nu) u\d+g(\phi_1+\d)}{(\phi_1+\d)^2}\,.
\end{equation}
Note that $\phi_1>0$ so the constant $h_\Sigma=\max_\Sigma\phi_1/\min_\Sigma\phi_1$ is positive and finite.
Consider first $\d=-\overline \d\min\phi_1$, $\overline \d\in (0,1)$. Then, if  $u/(\phi_1+\d)$  has a maximum that occurs at a boundary point, then by \eqref{bdry-max-min}, at that point\footnote{Observe that here we are using that the boundary of $M$ has positive curvature along $\nu$, i.e., $\II^{\partial M}(\nu,\nu)>0$.}
\begin{equation}\label{u-max}
u\le \frac{g(\phi_1+\d)}{-\d \II^{\partial M}(\nu, \nu)}\le \frac{|g| \max_\Sigma \phi_1}{\bar \d \min_\Sigma \phi_1\II^{\partial M}(\nu, \nu)}\le  |g|\frac{h_\Sigma}{\overline \d\II^{\partial M}(\nu, \nu)}\,.
\end{equation}
Additionally, by \eqref{int-max-min},  it  cannot have a positive interior maximum if
\[
(\bar\l-\l_1)\phi_1+\d(\bar \l+q)<0\,.
\]
Since 
\[
\begin{split}
(\bar\l-\l_1)\phi_1+\d(\bar \l+q)&\le (\bar\l-\l_1)\min_\Sigma \phi_1-\bar \d\min_\Sigma\phi_1(\bar \l+q)
=\min_\Sigma\phi_1(\bar \l-\l_1-\bar\d(q+\bar\l))\,,
\end{split}
\]
this is ensured, if we choose $\bar \d=\frac{\l_1-\bar \l}{-\overline
\l+\max_\Sigma\abs{q}}$.
Estimate \eqref{u-max} now implies 
\[
u\le\frac{C{(-\bar\l+1)}}{ \l_1-\bar \l}\abs{g}\,,\,\,
\]
where $C$ is a constant that just depends on $M$ and $\Sigma$. 

We can similarly show a lower bound by choosing $\d=\overline \d\min\phi_1$, with $\overline \d\in (0,1)$. Then, if  $u/(\phi_1+\d)$  has a minimum that occurs at a boundary point, then, by \eqref{bdry-max-min} at that point
\begin{equation}\label{u-min}
u\ge \frac{g(\phi_1+\d)}{\d \II^{\partial M}(\nu, \nu)}\ge -\frac{|g|(h_\Sigma+\overline \d)}{\bar \d \II^{\partial M}(\nu, \nu)}\,.
\end{equation}
Additionally, by \eqref{int-max-min}, it  cannot have a negative interior minimum if
\[
(\bar\l-\l_1)\phi_1+\d(\bar \l+q)<0\,.
\]
This is ensured, if we choose $\overline \d = \frac{\l_1-\overline\l}{ \max_\Sigma\abs{q}}$.
Estimate \eqref{u-min} now implies 
\[
u\ge -C\frac{\abs{g}}{\l_1-\bar\l}\,,\,\,
\]
where $C$ is a constant that just depends on $M$ and $\Sigma$ and we have thus proved \eqref{claimL2}.

Finally, we show that a solution does exist in the weak sense by using a theorem on variational existence for linear parabolic equations.
Using the operator $Q$ as in \eqref{bilinear}, we set
\[
Q_{\bar\l}(u,v)=Q(u,v)-\int_\Sigma \bar \l uv
\]
and note that a solution of \eqref{eq:ClaimBoundaryProblem} should satisfy
\[
Q_{\bar\l}(u,v)+\int_\Sigma u_t v=\int_{\partial \Sigma} g v\,.
\]
Then, provided $Q_{\bar \l}$ is bounded and coercive, it is known \cite[Chapter~3, Theorem~4.1]{LionsMagenes1972}, that, given any $t_0<0$, there is a unique solution (in the distributional sense) $u\colon\Sigma\times [t_0,0]\to \R$ with $u(\cdot, t_0)=0$.
The a priori estimates we have already derived, which are uniform in $t_0$, allow us to extract a limit, for a sequence $t_0=t_0^i\to -\infty$, which provides the required solution.

It thus remain to show that $Q_{\bar \l}$ is indeed bounded and coercive, that is
\[
Q_{\bar\l}(u, v)\le C \norm{u}_{W^{1,2}(\Sigma)}\norm{v}_{W^{1,2}(\Sigma)}\quad \text{and}  \quad Q_{\bar\l}(u, u)\ge C^{-1}\norm{u}_{W^{1,2}(\Sigma)}^2\,,\quad\text{for some $C>0$}\,.
\]
Boundedness is immediate by using Holder's inequality and the trace theorem:
\[
\norm{u}_{L^2(\partial \Sigma)}\le C\norm{u}_{W^{1,2}(\Sigma)}\,.
\]
In order to show coercivity, we use a refined estimate of the trace that states that for any $\varepsilon>0$, there exists $C_\varepsilon$ such that\footnote{This refined trace inequality follows easily from the proof of the trace theorem in \cite[Section~5.5, Theorem~1]{Evans2010PDE} by using Young's inequality with weight $\varepsilon$ (cf.\ \cite[\S B.2~(d)]{Evans2010PDE}) instead of the standard Young's inequality (cf.\ \cite[\S B.2~(c)]{Evans2010PDE}) in equation~(1).}
\begin{equation}\label{etrace}
\norm{u}_{L^2(\partial \Sigma)}^2\le \varepsilon\norm{\nabla u}_{L^2(\Sigma)}^2+C_\varepsilon\norm{u}_{L^2(\Sigma)}^2\,.
\end{equation}
Note that by \eqref{Ral} we have 
\[
Q_{\bar \l}(u, u)\ge (\l_1-\bar \l)\norm{u}_{L^2(\Sigma)}\,.
\]
Moreover, by letting $q_0=\max_\Sigma q$ and by using the trace theorem with $\varepsilon=(2\max\{\II^{\partial M}\})^{-1}>0$ we also have 
\[
Q_{\bar\l} (u, u)\ge \norm{\nabla u}_{L^2(\Sigma)}^2-(q_0+\bar \l)\norm{u}_{L^2(\Sigma)}^2-\frac12\norm{\nabla u}_{L^2(\Sigma)}^2-C_0 \norm{u}_{L^2(\Sigma)}^2\,,
\]
where $C_0$ depends only on $\Sigma$ and $M$.
Adding the two estimates we obtain
\begin{equation}\label{Tl-coerc}
2Q_{\bar \l}(u, u)\ge \frac12 \norm{\nabla u}_{L^2(\Sigma)}^2+ (-q_0+\l_1-C_0-2\bar \l)\norm{u}_{L^2(\Sigma)}^2
\end{equation}
and, choosing $\bar \l$ small enough so that $-q_0+\l_1-C_0-2\bar \l>0$, we obtain coercivity.
\end{proof}

Next, we solve a parabolic problem with homogeneous boundary condition. 
\begin{claim}\label{bhomog} 
Given  $f\colon\Sigma\times(-\infty,0]\to\R$ and $u\colon\Sigma\to\R$, such that $\|u\|_{L^{2}(\Sigma)}+\|f\|_{L^{2,\l}}< \infty $, with $\l<0$ not an eigenvalue, there exists a unique solution to the problem 
\[
\begin{cases}
v_t-\jac_\Sigma v= f & \text{on $\Sigma$}\\
b(v)=0&\text{on $\partial\Sigma$} \,,
\end{cases}
\]
with $v(\cdot,t)\in L^2(\Sigma)$ for all $t\in (-\infty,0]$ and
\[
\int_\Sigma v(x, 0)\phi_j(x)=-\int_\Sigma u(x)\phi_j(x) \,,\quad \text{for all $j$ such that }\l_j< \l \,.
\]
Moreover, this solution satisfies
\begin{equation}\label{bh-sln}
\|v\|_{L^{2, \l}(\Sigma\times(-\infty,0])}\le C(\|u\|_{L^{2}(\Sigma)}+ \|f\|_{L^{2,\l}(\Sigma\times(-\infty,0])})
\end{equation}
where the constant $C>0$ depends only on $M$ and $\Sigma$.
\end{claim}
\begin{proof}[Proof of claim] 
Note that such a solution should satisfy
\[
\int_\Sigma \partial_t v \phi + \int_\Sigma -\jac_\Sigma v \phi= \int_\Sigma \partial _t v\phi+\int_\Sigma\nabla v\cdot \nabla \phi-\int_\Sigma q uv-\int_{\partial \Sigma}  \II^{\partial M} (\nu,\nu) v\phi=\int_\Sigma f\phi
\]
and therefore, formally, the solution is given by
\[
v(x,t)=\sum_{j=1}^\infty  v_j(t)\phi_j(x)\,,
\]
with 
\[
v_j(t)=
\begin{cases}
e^{-2\l_j t} a_j-\int_t^0e^{\l_j(s-t)} f_j(s)&\text{for $j$ such that  $\l_j<\l$}\\
\int_{-\infty}^te^{\l_j(s-t)} f_j(s)&\text{otherwise}\,,
\end{cases}
\]
where
\[
 f_j(s)=\begin{cases}\l_ja_je^{-2\l_j s}+\int_\Sigma  f(x,s)\phi_j(x) &\text{for $j$ such that $\l_j<  \l$}\\
  \int_\Sigma  f(x,s)\phi_j(x) &\text{otherwise}\,,
\end{cases}
\]
and 
\[
a_j=\begin{cases}-\int u(x)\phi_j (x) &\text{for $j$ such that $\l_j< \l$}\\
  0 &\text{otherwise}\,.
  \end{cases}\
\]
Note that, formally,
\[
\begin{split}
v_t(x,t)&=\sum_{j=0}^\infty  v'_j(t)\phi_j(x)=\sum_{j=0}^\infty   f_j(t)\phi_j(x)-\sum_{j\colon\l_j<\l}\l_je^{-2\l_j t} a_j\phi_j-\sum_{j=0}^\infty\l_j v_j\phi_j\\
&=  f+ \sum_{j=0}^\infty v_j \jac_\Sigma \phi_j =f+\jac_\Sigma v\,.
\end{split}
\]

We now want to show that the above infinite sums are well defined.
Choose $\delta>0$ so that $\l+2\d<0$ and $(-2\d+ \l, \l+2\d)$ does not contain any eigenvalues. For all $j$ such that $\l_j<\l$, using that $\l_j-\l+\d<0$, we estimate
\[
\begin{split}
\abs{v_j -e^{-2\l_j t}a_j}^2&\le \left(\int_t^0  e^{2(\l_j-\l+\d)(s-t)}\right)\left(\int_t^0 e^{2(\l-\d)(s-t)} \abs{f_j(s)}^2 \right) \\
&\le \frac{1}{2(\l-\d-\l_j)} \int_t^0 e^{2(\l-\d)(s-t)} \abs{f_j(s)}^2\,,
\end{split}
\]
and for $j$ such that $\l_j>\l$, using that $\l_j-\l-\d>0$, we estimate
\[
\begin{split}
\abs{v_j}^2 &\le \left(\int_{-\infty}^t  e^{2(\l_j-\l-\d)(s-t)}\right)\left(\int_{-\infty}^t e^{2(\l+\d)(s-t)} \abs{f_j(s)}^2\right) \\
&\le \frac{1}{2(\l_j-\l-\d)}\int_{-\infty}^t e^{2(\l+\d)(s-t)} \abs{f_j(s)}^2\,.
\end{split}
\]
Therefore, since by Parseval's formula  
\[
\sum_{j=0}^\infty \abs{f_j(s)}^2\le 2\|f(\cdot, s)\|^2_{L^2(\Sigma)}+2\sum_{j\colon\l_j<\l}\l_j^2a_j^2e^{-4\l_j s}\le 2\|f(\cdot, s)\|^2_{L^2(\Sigma)}+2\l^2\|u\|^2_{L^2(\Sigma)} e^{-4\l s}\,, 
\]
we obtain
\[
\begin{split}
\sum_{j=0}^\infty v_j^2&\le 2 \|u\|^2_{L^2(\Sigma)}e^{-4 \l t}+\frac{2}{\d}\int_t^0 e^{2(\l-\d)(s-t)}\left(\| f(\cdot, s)\|^2_{L^2(\Sigma)}+\l_1^2 \|u(\cdot, 0)\|^2_{L^2(\Sigma)} e^{-4\l s}\right)+{}\\
&\phantom{=}+\frac 2\d\int_{-\infty}^t e^{2(\l+\d)(s-t)} \| f(\cdot, s)\|^2_{L^2(\Sigma)}\\
&\le 2 \|u\|^2_{L^2(\Sigma)}e^{-4 \l t}+\frac{2}{\d}\|f\|^2_{L^{2,\l}}e^{-2\l t}+\frac{\l_1^2}{-(\l+\d)\d}\|u\|_{L^2(\Sigma)}^2 e^{-2\l t} < \infty\,.
\end{split}
\]
This proves that $v(x,t)$ is well defined and is a solution of the desired problem.
Finally, using Parseval's formula again, we have
\[
\|v(\cdot ,t)\|_{L^2(\Sigma)}^2\le C \|u\|^2_{L^2(\Sigma)}e^{-4 \l t}+C \| f\|^2_{L^{2, \l}} e^{-2\l t},
\]
where $C$ depends only on $\Sigma$, and this implies \eqref{bh-sln}.
\end{proof}

Given now $f, g$ as in the hypotheses of \cref{mainPDE} with $\|f\|_{L^{2,\l}}+\|g\|_{L^{2,\l}}<\infty$, we let $u$ be the  solution of the problem \eqref{eq:ClaimBoundaryProblem} in \cref{homog} with fixed $\overline \l<2\l_1$ and with the given $g$. Then, by \cref{homog} we have that $u$ satisfies \eqref{claimL2} and \eqref{claimeq} with  $C$ depending only on $\Sigma$ and $M$. We also let $v$ be the unique solution given in \cref{bhomog}, with $u=u(\cdot,0)$ (using the aforementioned $u$) and with $f$ replaced by $f-\bar  \l u$. That is
\[
\begin{cases}
v_t-\jac_\Sigma v= f-\bar\l u & \text{on $\Sigma$}\\
b(v)=0&\text{on $\partial\Sigma$} \,,
\end{cases}
\]
and it satisfies the estimate
\begin{equation}\label{vestim}
\|v\|_{L^{2, \l}}\le C\|u(\cdot,0)\|_{L^2(\Sigma)}+ \|f-\bar \l u\|_{L^{2,\l}}\le C(\|f\|_{L^{2,\l}}+\|g\|_{L^{2,\l}(\partial\Sigma\times(-\infty,0])}) \,,
\end{equation}
where in the last inequality we used  \eqref{claimL2} and where $C$ depends only on $M$ and $\Sigma$. We consider now $w=u+v$, with $u$ and $v$ as above. Then
\[
\begin{cases}
w_t-J_\Sigma w=\bar\l u+ f-\bar \l u=f  & \text{on $\Sigma$}\\
b(w)=g&\text{on $\partial\Sigma$} 
\end{cases}
\]
and
\[
\int_\Sigma w(x, 0)\phi_j(x) =0\,,\quad \text{for all $j$ such that }\l_j< \l \,.
\]
Moreover, by \eqref{claimL2} and \eqref{vestim} we have
\[
\|w\|_{L^{2, \l}}\le \|u\|_{L^{2, \l}}+\|v\|_{L^{2, \l}}\le C(\|f\|_{L^{2,\l}}+\|g\|_{L^{2,\l}(\partial\Sigma\times(-\infty,0])})\,.
\]
Finally, by the parabolic Schauder estimates, \cref{Schauder},  $w$ satisfies
\[
\|w\|_{C^{2,\a}(\Sigma\times(t-1, t))}\le C\left(\|w\|_{L^{2}(\Sigma\times(t-2, t))}+\|f\|_{C^{0,\a}(\Sigma\times(t-2, t))}+\|g\|_{C^{1,\a}(\partial \Sigma\times(t-2, t))}\right)
\]
and by multiplying with $e^{\l t}$ we obtain
\begin{align*}
\|w\|_{C^{2,\a,\l}(\Sigma\times(-\infty,0])} &\le C\left( \|w\|_{L^{2,\l}(\Sigma\times(-\infty,0])}+\|f\|_{C^{0,\a,\l}(\Sigma\times(-\infty,0])}+\|g\|_{C^{1,\a,\l}(\partial \Sigma\times(-\infty,0])}\right)\\
&\le C\left(\|f\|_{C^{0,\a,\l}(\Sigma\times(-\infty,0])}+\|g\|_{C^{1,\a,\l}(\partial \Sigma\times(-\infty,0])}\right)\,,
\end{align*}
which is the required estimate of the lemma. This estimate also yields uniqueness by considering the difference of two possible solutions.
\end{proof}

Now we have all the ingredients to prove that we can find a solution for \eqref{gmcf3}.

\begin{theorem} \label{construction} 
Let $\phi=\sum^I_{k=1}a_k\phi_ke^{-\l_k t}$ and let $\l\in (\l_I, 0)$ and $\a\in (0,1)$. There exists $\varepsilon>0$, depending on $\l$ and $\a$ such that, if $\abs{a}\eqdef\left(\sum^I_{k=1}{a_k^2}\right)^\frac12<\varepsilon$, then there is a unique $C^{2,\a}$-function $u\colon \Sigma\times(-\infty,0]\to \R$ solving \eqref{gmcf3} (or, equivalently, $\phi+u$ solves the graphical mean curvature flow equation~\eqref{gmcf2}) and satisfying
\[
\|u\|_{C^{2,\a,\l}(\Sigma\times(-\infty,0])}<\varepsilon
\]
and 
\[
\int_\Sigma u(x, 0)\phi_k(x)=a_k\,,\quad \text{for all $k=1,\ldots,I$}\,.
\]
\end{theorem}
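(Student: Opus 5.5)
The plan is to obtain $u$ as the fixed point of a contraction on a small closed ball of the Banach space $X\eqdef C^{2,\a,\l}(\Sigma\times(-\infty,0])$, with \cref{mainPDE} serving as the linear solver. Throughout I use that $\ind(\Sigma)=I$ gives $\l_I<0\le\l_{I+1}$. Hence any $\l\in(\l_I,0)$ is automatically not an eigenvalue, and the set $\{j\st\l_j<\l\}$ is exactly $\{1,\ldots,I\}$.

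First I record that $\phi$ is small in $X$. Since $\l>\l_k$ for every $k\le I$, the weight satisfies $e^{\l t}e^{-\l_k t}=e^{(\l-\l_k)t}\le 1$ for $t\le 0$. Combined with smoothness of the $\phi_k$, this gives $\|\phi\|_{C^{2,\a,\l}}\le C_0\abs{a}$.

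Next I define the map $\mathcal T$. For $u$ in the ball $B_\varepsilon=\{\|u\|_{C^{2,\a,\l}}\le\varepsilon\}$, I take $w$ to be the unique solution from \cref{mainPDE} with data $f=E(u+\phi)$ and $g=\e(u+\phi)$, normalized by $\int_\Sigma w(\cdot,0)\phi_k=0$ for $k\le I$. I then set
\[
\mathcal T(u)\eqdef w+\sum_{k=1}^I a_k\phi_k e^{-\l_k t}.
\]
The added sum solves the homogeneous linear problem, and it has $X$-norm at most $C_0\abs{a}$ by the first step. Therefore $\mathcal T(u)$ still solves the linear problem with data $(f,g)$, and it has the prescribed projections $\int_\Sigma\mathcal T(u)(\cdot,0)\phi_k=a_k$. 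A fixed point of $\mathcal T$ then solves \eqref{gmcf3} with the required normalization.

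I also need to check that $\mathcal T$ is well defined. If $\varepsilon$ and $\abs{a}$ are small, then $\|(u+\phi)(\cdot,t)\|_{C^1}\le e^{-\l t}\cdot C(\varepsilon+\abs a)$. This bound is not uniformly small as $t\to-\infty$ unless one is careful with the direction of the weight. Here it is fine: $\l<0$ and $t\le0$ give $e^{-\l t}\le 1$, so the norm is small and, in particular, below $r_0$. The $L^{2,\l}$ finiteness hypothesis of \cref{mainPDE} follows from the $C^{0,\a,\l}$ bounds below.

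The core is the nonlinear estimate, which I carry out on each window $(t-1,t)$ with \cref{E-estimates}:
\[
\|E(u+\phi)\|_{C^{0,\a}(\Sigma\times(t-1,t))}+\|\e(u+\phi)\|_{C^{1,\a}(\partial\Sigma\times(t-1,t))}\le C\|u+\phi\|^2_{C^{2,\a}(\Sigma\times(t-1,t))}\le Ce^{-2\l t}(\varepsilon+C_0\abs a)^2 .
\]
Multiplying by $e^{\l t}$ leaves the factor $e^{-\l t}\le1$, so the weighted norms of the data are at most $C(\varepsilon+C_0\abs a)^2$. \cref{mainPDE} then gives
\[
\|\mathcal T(u)\|_X\le C(\varepsilon+C_0\abs a)^2+C_0\abs a .
\]
For the contraction property, the difference estimates in \cref{E-estimates} give in the same way
\[
\|\mathcal T(u)-\mathcal T(v)\|_X\le C(\varepsilon+C_0\abs a)\,\|u-v\|_X ,
\]
because the added homogeneous terms cancel in the difference.

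To close the argument, I choose $\abs a\le\varepsilon/(4C_0)$ and $\varepsilon$ small enough that $\mathcal T$ maps $B_\varepsilon$ into itself with Lipschitz constant at most $1/2$. The Banach fixed point theorem then yields existence. Uniqueness within the ball follows from the contraction property, together with uniqueness in \cref{mainPDE}: any solution $u$ in the ball with the stated projections satisfies $u=\mathcal T(u)$.

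I expect the main obstacle to be the careful bookkeeping of the weights. One must check that the quadratic nature of $E$ and $\e$ actually gains the factor $e^{-\l t}\le 1$, and that the parabolic norms on overlapping windows $(t-2,t)$ versus $(t-1,t)$ (from the Schauder step inside \cref{mainPDE}) only cost a constant $e^{\abs\l}$. A further point is that $\varepsilon$ must also be taken small relative to $r_0$, so that the graphs $\Sigma_{u+\phi}$ remain defined.
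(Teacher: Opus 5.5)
Your contraction argument is the paper's: a fixed point on a small ball of $C^{2,\a,\l}(\Sigma\times(-\infty,0])$, with \cref{mainPDE} as the linear solver and \cref{E-estimates} supplying the quadratic and Lipschitz bounds. Your handling of the weights is also correct. The one genuine difference is the normalization, and there you have followed a misprint in the statement.

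In the paper the map is $S(u)=w$ alone, with nothing added. Its fixed point therefore satisfies $\int_\Sigma u(\cdot,0)\phi_k=0$ for $k\le I$, and it is the flow $\phi+u$ whose projections at $t=0$ equal the $a_k$. The paragraph before \cref{mainPDE} announces exactly this. The proof of \cref{uniqueness} uses it the same way: it sets $a_k=\langle u^\tau(\cdot,0),\phi_k\rangle$, and it identifies the correction $u^\tau-\phi^\tau$, which has vanishing projections, with the constructed solution.

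Adding $\sum_k a_k\phi_k e^{-\l_k t}$ in $\mathcal T$ does not break the fixed-point argument, but it changes both the object you construct and the quantifiers.
\begin{enumerate}
\item Your flow is $\phi+u=2\phi+w$, i.e.\ the paper's solution with parameter $2a$.
\item Your $u$ is only $O(|a|)$ in $C^{2,\a,\l}$ rather than $O(|a|^2)$, so it no longer encodes that $\phi$ is the leading-order term.
\item Since $\phi$ enters $\mathcal T(u)$ linearly, you need $|a|\le\varepsilon/(4C_0)$, whereas the statement allows $|a|<\varepsilon$ with the same $\varepsilon$. Under your reading that cannot be expected in general. Any admissible $u$ satisfies $\|u\|_{C^{2,\a,\l}}\ge\|\phi\|_{C^{2,\a,\l}}-O(\varepsilon^2)$, and $\|\phi\|_{C^{2,\a,\l}}$ can be a fixed multiple $C_0>1$ of $|a|$.
\end{enumerate}
In the paper's normalization the data $E(u+\phi)$ and $\e(u+\phi)$ are quadratic, which gives $\|S(u)\|_{C^{2,\a,\l}}\le C(\|u\|^2_{C^{2,\a,\l}}+|a|^2)$, so $|a|<\varepsilon$ suffices.

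The fix is to drop the added term and read the normalization as $\int_\Sigma(\phi+u)(\cdot,0)\phi_k=a_k$. The rest of your proof then goes through, including the uniqueness step via \cref{mainPDE}, with the linear term $C_0|a|$ simply removed from your self-map estimate.
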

\begin{remark}
Observe that the bound on the weighted H\"older norm implies a strong exponential decay of the solution $u$ to zero.
\end{remark}
\begin{remark}
We remark here that if $J$ is the largest index such that $a_J\ne 0$, then, for  any 
$\l<0$ such that $\l\in (\l_J, 0)$, a solution as in \cref{construction} can be constructed satisfying
\[
\|u\|_{C^{2,\a,\l}(\Sigma\times(-\infty,0])}<\infty
\]
and 
\[
\int_\Sigma u(x, 0)\phi_k(x)=a_k\,,\quad \text{for all $k$ such that }\l_k\le \l_J  \,.
\]
\end{remark}
\begin{proof}
For the given $\l$ and $\a$,  and for $\varepsilon>0$, we consider the space of functions 
\[
\mathcal C_\varepsilon=\{ u\colon\Sigma\times (-\infty, 0]\to \R \st  \|u\|_{C^{2,\a,\l}}<\varepsilon\}
\]
and for $u\in \mathcal C_\varepsilon$, we define $S(u)=v$ to be a solution of 
\[
\begin{cases}
v_t-\jac_\Sigma(v)= E(u+\phi) &\text{on $\Sigma$}\\
b(v)= \e(u+\phi) &\text{on $\partial\Sigma$}\,.
\end{cases}
\]
Note that, by \cref{E-estimates},
\[
\|E(u+\phi)\|_{C^{0,\a,\l}}+\|\e(u+\phi)\|_{C^{1,\a,\l}(\partial \Sigma\times(-\infty, 0])}\le C\|u+\phi\|^2_{C^{2,\a,\l}}\le C(\|u\|^2_{C^{2,\a,\l}} + |a|^2)\,,
\]
therefore, by \cref{mainPDE}, there exists a unique solution  $S(u)=v$
that satisfies
\[
\|S(u)\|_{C^{2,\a,\l}}\le C(\|u\|^2_{C^{2,\a,\l}} + |a|^2)
\]
and 
\[
\int_\Sigma S(u)(x,0)\phi_j(x)=0\,,\quad \text{for all $j$ such that }\l_j< \l\,.\,
\]
Therefore, by choosing $\varepsilon$ small enough, we obtain that for $\abs{a}<\varepsilon$, $S(u)\in \mathcal C_\varepsilon$, and therefore $S$ maps $\mathcal C_\varepsilon$ to itself.

\cref{E-estimates} can be used to also show that $S$ is a contraction map. To see this, note that if $u_1, u_2\in \mathcal C_\varepsilon$, then by \cref{mainPDE}
\begin{align*}
\|S(u_1)-S(u_2)\|_{C^{2,\a,\l}}&\le C\big(\|E(u_1+\phi)-E(u_2+\phi)\|_{C^{0,\a,\l}}+{}\\
&\phantom{\le C (} + \|\e(u_1+\phi)-\e(u_2+\phi)\|_{C^{1,\a,\l}(\partial \Sigma\times(-\infty, 0])}\big)
\end{align*}
and using \cref{E-estimates},  the right hand side is bounded by 
\[
C\|u_1-u_2\|_{C^{2,\a,\l}}(\|u_1+\phi\|_{C^{2,\a,\l}}+\|u_2+\phi\|_{C^{2,\a,\l}})\le C\|u_1-u_2\|_{C^{2,\a,\l}}(\|u_1\|_{C^{2,\a,\l}}+\|u_2\|_{C^{2,\a,\l}}+|a|)\,.
\]
Therefore  choosing $\varepsilon$ small enough, we have
\[
\|S(u_1)-S(u_2)\|_{C^{2,\a,\l}}\le\frac12\|u_1-u_2\|_{C^{2,\a,\l}}\,.
\]
Hence $S$ is a contraction mapping in $C_\varepsilon$. This concludes the proof, with the contraction mapping theorem providing the uniqueness as well.
\end{proof}

\section{Rigidity of the constructed solutions} \label{sec:rigidity}

Considering an $L^2(\Sigma)$-orthonormal basis $\{\varphi_k\}_{k
\ge 1}$ of eigenfunctions of the Jacobi operator \eqref{eq:JacobiEigenvalueProblem} as in \cref{sec:JacobiOp}, we can write any function $u\in L^2(\Sigma)$ as
\[
u=\sum_{k=1}^\infty\langle \phi_k, u\rangle\phi_k \,,
\]
where note that the infinite sum is understood to be converging in $L^2(\Sigma)$. Therefore, even if all the $\phi_k$'s satisfy the boundary condition $b(\phi_k)=\frac{\partial \phi_k}{\partial \eta}-\II^{\partial M}(\nu, \nu)\phi_k=0$, this does not imply that $u$ does as well.
We use the notation
\[
\Phi^+, \Phi^-, \Phi^0
\]
to denote the closures of the positive, negative and null eigenspaces of the Jacobi operator respectively. Note that $\Phi^-$ and $\Phi^0$ are finite dimensional. By abusing notation,  we use the same symbol to denote the corresponding linear projections.
\begin{definition}\label {projections}
Let $u\colon\Sigma\times(-\infty, 0]\to \R$ be a function with $u(\cdot, t)\in L^2(\Sigma)$ for all $t\in (-\infty,0]$. We define $u^+, u^-, u^0$ to be the projections of $u$ on the corresponding eigenspaces, with the projections defined on each $t$-slice, that is 
\[
u^+(\cdot, t)\eqdef\Phi^+(u(\cdot,t))\,,\quad u^-(\cdot,t)\eqdef \Phi^-(u(\cdot,t))\,,\quad u^0(\cdot,t)\eqdef\Phi^0(u(\cdot, t))\,.
\]
Note that  $u^-$ and $u^0$ are the finite sums
\[
u^-=\sum_{k\, :\, \phi_k\in \Phi^-}\langle \phi_k, u\rangle_{L^2(\Sigma)}\phi_k\,,\quad u^0=\sum_{k\, :\, \phi_k\in \Phi^0}\langle \phi_k, u\rangle_{L^2(\Sigma)}\phi_k\,.
\]
As $u^+$ cannot be written as a finite sum, we instead often use the more convenient expression
\[
u^+=u-u^--u^0\,.
\]
\end{definition}

Next we show that if $u$ satisfies a linear equation then its projections as in \cref{projections} satisfy appropriately defined and similar equations.
\begin{lemma}\label{projpde}
Let $u\colon \Sigma\times(-\infty,0]\to\R$ be a solution of
\[
\begin{cases}
u_t-\jac_\Sigma u=f & \text{on $\Sigma$}\\ 
b(u)=g & \text{on $\partial\Sigma$}\,,
\end{cases}
\]
for given $f\colon\Sigma\times(-\infty,0]\to\R$ and $g\colon\partial\Sigma\times(-\infty,0]\to\R$.
Then, any of the three projections of $u$, $U\eqdef\Phi(u)$, where $\Phi=\Phi^+, \Phi^-$ or $\Phi^0$, as defined in \cref{projections}, satisfies
\begin{equation}\label{proj-eqns}
Q(U, \phi)+\int_\Sigma U_t\phi=\int_\Sigma f\Phi(\phi)+\int_{\partial \Sigma}g\Phi(\phi)\,, \quad \forall \varphi\in W^{1,2}(\Sigma)\,,
\end{equation}
where $Q$ is as in \eqref{bilinear}.

\end{lemma}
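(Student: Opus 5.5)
The plan is to derive a weak formulation for $u$ itself and then move the projection from $U$ onto the test function, using that $\Phi$ is $L^2$-self-adjoint and diagonal in the eigenbasis. First, multiplying the equation by a test function $\phi\in W^{1,2}(\Sigma)$ and integrating by parts on each time slice gives
\[
\int_\Sigma u_t\phi+\int_\Sigma\nabla u\cdot\nabla\phi-\int_\Sigma qu\phi-\int_{\partial\Sigma}\frac{\partial u}{\partial\eta}\phi=\int_\Sigma f\phi\,.
\]
Substituting $\frac{\partial u}{\partial\eta}=\II^{\partial M}(\nu,\nu)u+g$ then yields
\[
Q(u,\phi)+\int_\Sigma u_t\phi=\int_\Sigma f\phi+\int_{\partial\Sigma}g\phi\,,\qquad\forall\phi\in W^{1,2}(\Sigma)\,.
\]
This is the identity \eqref{proj-eqns} with $\Phi=\mathrm{Id}$.

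Next, we need the key structural fact: for every $v,\phi\in W^{1,2}(\Sigma)$ and $\Phi\in\{\Phi^+,\Phi^-,\Phi^0\}$,
\[
Q(\Phi(v),\phi)=Q(v,\Phi(\phi))\,.
\]
We will also use that $\Phi(\phi)\in W^{1,2}(\Sigma)$ whenever $\phi\in W^{1,2}(\Sigma)$. For $\Phi^-$ and $\Phi^0$ this is elementary, since they are finite sums of smooth eigenfunctions. Testing the eigenvalue problem \eqref{eq:JacobiEigenvalueProblem} for $\phi_k$ against any $w\in W^{1,2}$ and integrating by parts gives $Q(\phi_k,w)=\l_k\int_\Sigma\phi_k w$. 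Hence
\[
Q(\Phi(v),\phi)=\sum_k\l_k\langle\phi_k,v\rangle\langle\phi_k,\phi\rangle=Q(v,\Phi(\phi))\,,
\]
where both sums run over the finitely many $k$ in the range of $\Phi$. For $\Phi^+=\mathrm{Id}-\Phi^--\Phi^0$ the identity follows by linearity from the other two. Note that the boundary condition of $v$ never enters, which is consistent with the remark that the projections need not satisfy $b=0$.

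Finally, we apply the weak identity for $u$ with test function $\Phi(\phi)$ and use self-adjointness. This gives
\[
Q(U,\phi)=Q(u,\Phi(\phi))\,,\qquad \int_\Sigma U_t\phi=\int_\Sigma\Phi(u_t)\phi=\int_\Sigma u_t\Phi(\phi)\,.
\]
The first equality is the structural fact above. The second holds because $\Phi$ commutes with $\partial_t$: the coefficients $\langle\phi_k,u(\cdot,t)\rangle$ are differentiated under the integral sign, and $\Phi^+$ is handled via $u-u^--u^0$. Summing these gives exactly \eqref{proj-eqns}.

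The main, though mild, obstacle is regularity. We need $u(\cdot,t)\in W^{1,2}$ with $u_t(\cdot,t)\in L^2$, so that the integration by parts and the interchange of $\partial_t$ with $\Phi$ are justified; both hold for the $C^{2,\a}$ solutions considered. Beyond that, the only point requiring care is $\Phi^+$, which we avoid treating as an infinite sum by always writing it as $\mathrm{Id}-\Phi^--\Phi^0$.
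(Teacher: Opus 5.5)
Your proposal is correct and follows essentially the same route as the paper: you derive the weak identity for $u$, show $Q(U,\phi)=Q(u,\Phi(\phi))$ and $\int_\Sigma U_t\phi=\int_\Sigma u_t\Phi(\phi)$ for the finite-dimensional projections $\Phi^-,\Phi^0$, pass to $\Phi^+$ via $u^+=u-u^--u^0$ and linearity, and then test the weak identity with $\Phi(\phi)$. Your use of $Q(\phi_k,w)=\l_k\int_\Sigma\phi_k w$ for arbitrary $w\in W^{1,2}(\Sigma)$ is a slightly cleaner way to get the self-adjointness than the paper's case split into $\phi\in\Phi$ versus $\phi\in\Phi^\perp$.
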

\begin{proof}
Using the bilinear form $Q$, $u$ satisfies
\begin{equation}\label{Tforu}
Q(u,\phi)+\int_\Sigma u_t\phi=\int_\Sigma f\phi+\int_{\partial \Sigma}g\phi\,.
\end{equation}

Note first that the projections, $u^\pm$ and $u^0$, have the same regularity as $u$. This is because $\Phi^-, \Phi^0$ are finite dimensional and $u^+=u-u^--u^0$, as explained in \cref{projections}. Therefore the
 terms in \eqref{proj-eqns} are well defined. 

Consider first $\Phi=\Phi^-$ or $\Phi^0$. In this case $U=\Phi(u)$ is a finite linear combination of eigenfunctions, and thus at the boundary it satisfies $b(U) = 0$.
Therefore, 
 \[
 Q(U,\phi)=\int_\Sigma -J_\Sigma U \phi= \int_\Sigma -J_\Sigma \varphi U=\begin{cases}\int_{\Sigma}-J_\Sigma\phi u= Q(u, \phi) &\text{if $\phi\in \Phi$}\\
 0&\text{if  $\phi\in \Phi^\perp$} \,.
 \end{cases}
 \]
 Moreover
 \[
 \int_\Sigma \phi U_t =\begin{cases}\int_\Sigma \phi u_t &\text{if $\phi\in \Phi$}\\
 0 &\text{if $\phi\in \Phi^\perp$}\,.
 \end{cases}
 \]
 These, together with \eqref{Tforu}, prove  \eqref{proj-eqns} for $U\in \Phi^-\cup\Phi^0$.

 To prove the last case, we write $u^+=u - u^0-u^-$, and compute 
\[
\begin{split}
Q(u^+,\phi)&= Q(u-u^0-u^-,\phi) = Q(u,\phi) - Q(u^0,\varphi) - Q(u^-,\phi)\,,\\
\end{split}
\]
By the previous case, if $\phi\in\Phi^+$, then $Q(u^0,\phi) = Q(u^-,\phi) = 0$. If $\phi\in(\Phi^+)^\perp = \Phi^0 + \Phi^-$, we get
\[
Q(u^0,\phi)+Q(u^-,\phi) =  Q(u,\Phi^0(\varphi)) + Q(u,\Phi^-(\varphi)) = Q(u,\phi)\,.
\]
In particular, we get that
\[
Q(u^+,\phi) = Q(u,\Phi(\phi)).
\]
Moreover, again using the previous case,
\[
\int_\Sigma u^+_t\phi=\int_\Sigma(u-u^0-u^-)_t\phi=\begin{cases}\int \phi u_t &\text{if $\phi\in \Phi^+$}\\
 0 &\text{if $\phi\in (\Phi^+)^\perp$}\,.
 \end{cases}
\]
By \eqref{Tforu}, this finishes the proof of  \eqref{proj-eqns} for the last case $U = u^+\in \Phi^+$.
\end{proof}

The evolution equations of the projections given in \cref{projpde} allow us to obtain estimates on the evolution of their $L^2$-norms. This is in the spirit of \cite[Lemma 5.5]{AngenentDaskalopoulosSesum2019} adapted here for the bilinear operator $Q$.
\begin{lemma} \label{ADSlemma}
Let $u\colon\Sigma\times(-\infty, 0]\to\R$ be a solution to the free boundary mean curvature flow \eqref{gmcf2}.
There exist constants $\varepsilon>0$ and $C>0$, depending only on $\Sigma$ and $M$, such that if 
\begin{equation}\label{assume1}
\|u(\cdot, t)\|_{{C^{1}(\Sigma)}}<\varepsilon \quad \forall t\le 0 \,,
\end{equation}
then the $L^2$-norms of the projections of $u$, as in \cref{projections}, satisfy:
\begin{align*}
\frac{d \|u^+(\cdot, t)\|_{L^2(\Sigma)}}{dt} &\le -\lambda^+\|u^+(\cdot,t)\|_{L^2(\Sigma)}+ C\|u(\cdot, t)\|_{{C^{2}(\Sigma)}}\|u(\cdot, t)\|_{L^2 (\Sigma)}\,, \\
\frac{d \|u^-(\cdot,t)\|_{L^2(\Sigma)}}{dt} &\ge -\lambda_I\|u^{-}(\cdot, t)\|_{L^2(\Sigma)}-
C\|u(\cdot, t)\|_{{C^{2}(\Sigma)}}\|u(\cdot, t)\|_{L^2(\Sigma)}\,, \\
\left|\frac{d}{dt}\|u^0(\cdot,t)\|_{L^2(\Sigma)}\right| &\le C\|u(\cdot, t)\|_{{C^{2}(\Sigma)}}\|u(\cdot, t)\|_{L^2(\Sigma)}\,,
\end{align*}
where $\lambda^+$ is the smallest positive eigenvalue and $\lambda_I$ is the largest negative eigenvalue.
\end{lemma}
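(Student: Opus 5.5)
The plan is to test the projected equation of \cref{projpde} against the projection itself, and to control the resulting quadratic boundary and interior error terms with \cref{prop:TimeSliceEstimates}. Since $u$ solves \eqref{gmcf2}, \cref{projpde} applies with $f=E(u)$ and $g=\e(u)$. For any of the three projections $U=\Phi(u)$ we take the test function $\phi=U$. Because $\Phi(U)=U$, \eqref{proj-eqns} gives
\[
\frac12\frac{d}{dt}\|U\|_{L^2(\Sigma)}^2 = -Q(U,U)+\int_\Sigma E(u)\,U+\int_{\partial\Sigma}\e(u)\,U\,.
\]

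The quadratic form is handled spectrally. For $U=u^-$ and $U=u^0$, which are finite sums of eigenfunctions, we have $Q(U,U)=\sum\l_k\sk{\phi_k}{u}^2$. This gives $-Q(u^-,u^-)\ge -\l_I\|u^-\|^2$ and $Q(u^0,u^0)=0$. For $u^+$ we use that $Q(u^+,u^+)\ge \l^+\|u^+\|^2_{L^2}$. This holds because $u^+\in W^{1,2}$ (it differs from $u$ by finitely many smooth eigenfunctions), and the min–max characterization of $Q$ on $W^{1,2}$, analogous to \eqref{Ral}, applied on the $Q$-orthogonal complement of $\Phi^-\oplus\Phi^0$, gives $\l^+$ as the lowest eigenvalue there. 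Since $Q$ is not $L^2$-controlled, we also record a bound on the gradient. By the refined trace inequality \eqref{etrace} as in \eqref{Tl-coerc}, $Q(u^+,u^+)\ge c\|\nabla u^+\|^2_{L^2}-C\|u^+\|^2_{L^2}$. Combining with a small multiple of the spectral bound, we get $Q(u^+,u^+)\ge \l^+\|u^+\|^2-\delta$-losses that can be absorbed. Equivalently, we keep $\l^+\|u^+\|^2$ and use that the extra error is estimated in terms of $\|u^+\|_{W^{1,2}}$.

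The errors come next. The interior term satisfies $|\int E(u)U|\le \|E(u)\|_{L^2}\|U\|_{L^2}\le C\|u\|_{C^2}\|u\|_{W^{1,2}}\|U\|_{L^2}$ by \cref{prop:TimeSliceEstimates}. The boundary term is bounded by $\|\e(u)\|_{L^2(\partial\Sigma)}\|U\|_{L^2(\partial\Sigma)}$. For $u^-$ and $u^0$, the trace of $U$ is bounded by $C\|U\|_{L^2(\Sigma)}$, by finite dimensionality. For $u^+$ we use instead the trace inequality $\|u^+\|_{L^2(\partial\Sigma)}\le C\|u^+\|_{W^{1,2}}\le C(\|u\|_{W^{1,2}})$.

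The main obstacle is replacing $\|u\|_{W^{1,2}}$ by $\|u\|_{L^2}$ in all these bounds. For this I would use the pointwise form of the estimates: $|\e(u)|\le C(|u|+|\nabla u|)^2$, so that $|\e(u)|\le C\varepsilon(|u|+|\nabla u|)$ by \eqref{assume1}. This is then combined with an interpolation $\|\nabla u\|^2_{L^2}\le \|u\|_{L^2}\|\Delta u\|_{L^2}+\text{boundary}$, with the boundary part handled using the $C^2$ bound. This yields $\|\nabla u\|_{L^2}^2\le C\|u\|_{C^2}\|u\|_{L^1}\lesssim\|u\|_{C^2}\|u\|_{L^2}$. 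In particular $\|\nabla u\|_{L^2}\le C\|u\|_{C^2}^{1/2}\|u\|_{L^2}^{1/2}$, and the products close to give a bound of the form $C\|u\|_{C^2}\|u\|_{L^2}\|U\|_{L^2}$ plus terms involving $\|\nabla u^+\|^2$. The latter are absorbed into the coercive part of $Q(u^+,u^+)$ using the smallness $\varepsilon$. Finally, dividing $\frac12\frac{d}{dt}\|U\|^2$ by $\|U\|$, where $\|U\|\neq 0$ (otherwise one argues with $\sqrt{\|U\|^2+\delta}$ and lets $\delta\to0$), gives the three differential inequalities.
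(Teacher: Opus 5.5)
Your setup matches the paper's. You test \eqref{proj-eqns} with $U$ itself, use the spectral bounds on $Q(U,U)$, and bound the error integrals via \cref{prop:TimeSliceEstimates} and the trace inequality. The gap is exactly the step you flag as the main obstacle: replacing $\|u\|_{W^{1,2}(\Sigma)}$ by $\|u\|_{L^2(\Sigma)}$.

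The paper does this with the reversed Poincar\'e inequality, \cref{prop:reversedpoincare}. Area is nonincreasing along the free boundary flow, and the flow comes out of $\Sigma$, so each slice satisfies $\area(\Sigma_{u(\cdot,t)})\le\area(\Sigma)$. The second-order expansion of area around the minimal hypersurface $\Sigma$, together with the $C^1$-smallness \eqref{assume1}, then forces $\int_\Sigma|\nabla u|^2\le C\int_\Sigma u^2$. This is a variational input coming from the flow, and no time-slice interpolation can replace it.

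To see why, grant your inequality $\|\nabla u\|_{L^2}^2\le C\|u\|_{C^2}\|u\|_{L^2}$. (Its boundary term $\int_{\partial\Sigma}u\,\partial_\eta u$ is in any case not controlled by $\|u\|_{L^1(\Sigma)}$.) It only gives $\|\nabla u\|_{L^2}\le C\|u\|_{C^2}^{1/2}\|u\|_{L^2}^{1/2}$. Then $\|E(u)\|_{L^2}\|U\|_{L^2}\le C\|u\|_{C^2}(\|u\|_{L^2}+\|\nabla u\|_{L^2})\|U\|_{L^2}$ contains the term $C\|u\|_{C^2}^{3/2}\|u\|_{L^2}^{1/2}\|U\|_{L^2}$. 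This is not bounded by $C\|u\|_{C^2}\|u\|_{L^2}\|U\|_{L^2}$, because $\|u\|_{C^2}/\|u\|_{L^2}$ is unbounded among $C^1$-small functions. For example, a bump of height $\delta^2$ and width $\delta$ has this ratio of order $\delta^{-2-m/2}$; such a graph has area larger than $\Sigma$, which is exactly what the area hypothesis rules out. So the products do not close.

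The absorption fallback does not rescue this. For $U=u^-$ and $U=u^0$, the term $-Q(U,U)$ involves only the finite-dimensional projection. It gives no control of $\|\nabla u\|_{L^2}$, which is dominated by $\|\nabla u^+\|_{L^2}$, so two of the three inequalities have nothing to absorb into. For $U=u^+$, trading a fixed fraction of the coercivity for a gradient term lowers the rate below $\l^+$, which the statement requires exactly. Any such loss would have to be of size $O(\|u\|_{C^2})\|u^+\|_{L^2}^2$ and tracked, which your sketch does not do.

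The fix is to insert \cref{prop:reversedpoincare} in place of the interpolation step, with its area hypothesis justified by area monotonicity and backward convergence to $\Sigma$. Then $\|E(u)\|_{L^2}+\|\e(u)\|_{L^2(\partial\Sigma)}\le C\|u\|_{C^2}\|u\|_{L^2}$. For $u^-$ and $u^0$ your argument closes exactly as in the paper, using that traces are controlled by $L^2$ norms on a finite-dimensional space of eigenfunctions.

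Your separate worry that $\|u^+\|_{L^2(\partial\Sigma)}$ is not controlled by $\|u^+\|_{L^2(\Sigma)}$ is legitimate. The paper's write-up passes over it quickly, absorbing the trace of $U$ into $C\|u\|_{W^{1,2}}$ as well. Still, it is secondary to the missing reversed Poincar\'e step.
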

\begin{proof} Let $U\colon\Sigma\times(-\infty, 0]\to\R$ denote any of the three projections, as in \cref{projections}. Then, using \cref{projpde},
\begin{equation}\label{dUdt}
\frac{d}{dt}\frac{1}{2}\|U(\cdot, t)\|^2_{L^2(\Sigma)} =-Q(U, U)+\int_\Sigma E(u) U+\int_{\partial \Sigma} \e(u)U\,.
\end{equation}
The last two integrals on the right-hand side, for any time slice $t$, can be bounded in absolute value, using Cauchy--Schwarz inequality and \cref{prop:TimeSliceEstimates}, by
\begin{equation}\label{IntEe}
\|E(u)\|_{L^2(\Sigma)}\|U\|_{L^2(\Sigma)} +
\norm{\e(u)}_{L^2(\partial\Sigma)} \norm{U}_{L^2(\partial\Sigma)}\le C\|u\|_{C^2(\Sigma)}\|u\|^2_{W^{1,2}(\Sigma)} \,.
\end{equation}

The hypotheses allow us to use \cref{prop:reversedpoincare}, which yields
\begin{equation}\label{W12-L2}
\|u(\cdot,t)\|_{W^{1,2}(\Sigma)}\le C\|u(\cdot, t)\|_{L^2(\Sigma)}\quad \forall t<0 \,.
\end{equation}

Therefore, \eqref{IntEe} together with \eqref{W12-L2}, yield
\[
\left|\int_\Sigma E(u) U+\int_{\partial \Sigma} \e(u)U\right|\le C\|u(\cdot, t)\|_{C^2(\Sigma)}\|u(\cdot, t)\|^2_{L^2(\Sigma)}
\]
Finally, note that  $Q(U, U)\ge \lambda^+\|U\|^2_{L^2}$ for $U\in \Phi^+$, $Q(U, U)\le \lambda_I\|U\|^2_{L^2}$ for $U\in \Phi^-$, and $Q(U, U)=0$ for $U\in \Phi^0$. Plugging these estimates in \eqref{dUdt} finishes  the proof.
\end{proof}

Next we show that any ancient solution emanating from  $\Sigma$ is one of the constructed solutions of \cref{construction}, provided that the convergence to it is sufficiently fast.

\begin{theorem}\label{uniqueness}
Let $u\colon\Sigma\times(-\infty, 0]\to\R$ be a smooth solution to the free boundary mean curvature flow \eqref{gmcf2}.
There exists a constant $\varepsilon>0$, depending only on $\Sigma$ and $M$
, such that if 
\begin{equation} \label{tu:assume1}
\norm{u}_{C^{1,\a}(\Sigma\times(-\infty,0])} < \varepsilon \,,
\end{equation}
and
\begin{equation} \label{tu:assume2}
\liminf_{t\to-\infty}\ \abs{t} \norm{u(\cdot,t)}_{C^0(\Sigma)} = 0 \,,
\end{equation}
then, possibly after a time translation, $u$  coincides with one of the solutions constructed in \cref{construction}.
\end{theorem}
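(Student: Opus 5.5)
We follow the Choi--Mantoulidis scheme. We set $U_\pm(t)=\|u^\pm(\cdot,t)\|_{L^2(\Sigma)}$ and $U_0(t)=\|u^0(\cdot,t)\|_{L^2(\Sigma)}$, and first turn the differential inequalities of \cref{ADSlemma} into an ODE system of Merle--Zaag type. Assumption \eqref{tu:assume1} gives $\|u(\cdot,t)\|_{C^1}<\varepsilon$, so \cref{ADSlemma} applies. By \cref{absorption} (estimate \eqref{gmcf-SchauderII} on windows $(t-2,t)$), we get $\|u(\cdot,t)\|_{C^2}\le C\|u\|_{C^{2,\a}(\Sigma\times(t-1,t))}\le C\|u\|_{L^2(\Sigma\times(t-2,t))}\le C\varepsilon$. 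Hence the error terms in \cref{ADSlemma} are bounded by $C\varepsilon(U_++U_-+U_0)$, with a small coefficient; if needed we replace $\|u(\cdot,t)\|_{L^2}$ by $\sup_{(t-2,t)}$ and handle the delay by a routine adaptation. The Merle--Zaag ODE lemma, in the form adapted in \cite{ChoiMantoulidis2022}, then gives a dichotomy as $t\to-\infty$: either (a) $U_++U_-\le o(1)U_0$, so the neutral mode dominates, or (b) $U_++U_0\le o(1)U_-$, so the unstable mode dominates. The stable mode $U_+$ cannot dominate backward in time, since it grows backward and the solution tends to $0$; we still need to check this using boundedness together with the decay \eqref{tu:assume2}.

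Next we rule out case (a) using \eqref{tu:assume2}. In case (a), $|U_0'|\le C\|u\|_{C^2}\|u\|_{L^2}\le C\|u\|_{C^2}U_0$, and $\|u\|_{C^2}\lesssim \|u\|_{L^2(t-2,t)}\lesssim U_0$ on the window. The resulting inequality $|U_0'|\le CU_0^2$ integrates to $U_0(t)\ge c/(|t|+C')$ for all $t\le 0$, unless $U_0$ vanishes identically (and then $u\equiv0$). This lower bound $U_0(t)\ge c/|t|$ contradicts $\liminf |t|\,\|u(\cdot,t)\|_{C^0}=0$, since $U_0\le C\|u\|_{C^0}$. This is exactly where the sublinear decay hypothesis enters. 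The main obstacle here is making the window/delay comparison between $\|u\|_{C^2}$ and $U_0$ rigorous. We would handle it by showing, via a Harnack-type bound from \cref{ADSlemma}, that $U_0$ varies by at most a bounded factor on unit intervals.

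Thus we are in case (b), where $U_-$ dominates. Then $U_-'\ge-\lambda_I U_- - C\varepsilon U_-$, which gives exponential backward decay $U_-(t)\le Ce^{-(\lambda_I+C\varepsilon)t}$ with rate close to $\lambda_I<0$. By Schauder (\cref{absorption}) this makes $\|u\|_{C^{2,\a,\l'}}<\infty$ for any $\l'\in(\lambda_I,0)$ close to $\lambda_I$; more precisely, decay at rate $e^{-\mu t}$ for some $\mu<0$. We then bootstrap the exponent. We write $u=\sum_{k\le I}\langle u,\phi_k\rangle\phi_k+u^0+u^+$ and use the projected equations of \cref{projpde}, with the forcing $E(u),\e(u)$ now of order $e^{-2\mu t}$. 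This shows that $\langle u,\phi_k\rangle(t)=b_ke^{-\l_k t}+O(e^{-2\mu t})$ for negative modes, and that $u^0,u^+$ are $O(e^{-2\mu t})$. The bound on $u^0,u^+$ follows by integrating from $-\infty$; this is justified because the neutral and stable modes decay backward only if their homogeneous parts vanish.

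Finally we identify $u$ with a constructed solution. We take $\l\in(\l_I,0)$ and set $a_k=b_k$ (after a time translation, so that $|a|<\varepsilon$). Then $v=u-\phi$ solves \eqref{gmcf3} with $\|v\|_{C^{2,\a,\l}}$ finite. After a further time translation $t\mapsto t-T$ with $T$ large, the weighted norm becomes small, since it picks up a factor $e^{\l T}$ times the excess decay. Adjusting the $a_k$ by the values $\langle v(\cdot,0),\phi_k\rangle$ to match the normalization in \cref{construction}, the uniqueness part of the contraction mapping in \cref{construction} forces $u$ to be the constructed solution.
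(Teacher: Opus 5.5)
Your proposal is correct and follows essentially the same route as the paper. It uses the Merle--Zaag dichotomy obtained from \cref{ADSlemma} and \cref{absorption}, rules out neutral-mode dominance by integrating $|U_0'|\le CU_0^2$ against the sublinear decay \eqref{tu:assume2}, and then identifies $u$ with a solution from \cref{construction} after a time translation that makes $|a|$ and the weighted norm small. The only difference is in the decay step: where the paper invokes \cite[Claims~4.5--4.7]{ChoiMantoulidis2022} to extract the sharp rate $e^{-\lambda_J t}$, you take the crude rate from $U_-'\ge-(\lambda_I+C\varepsilon)U_-$ and add one Duhamel step with quadratic forcing, which suffices because \cref{construction} only requires some $\lambda\in(\lambda_I,0)$.
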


\begin{remark}
Observe that a similar uniqueness result in the case without boundary can be found in \cite[Theorem~4.1]{ChoiMantoulidis2022}. Indeed, the proof of our theorem follows similar lines, given our previous results in the free boundary setting.
Note that the assumptions in \cite[Theorem~4.1]{ChoiMantoulidis2022} are different, but turn out to be stronger than ours. Indeed, assumptions \cite[(4.1), (4.2)]{ChoiMantoulidis2022}:
\[
\norm{u}_{C^{1,\a'}(\Sigma\times(-\infty,0])} < \infty \quad \text{and}\quad \|u(\cdot, t)\|_{{C^{1}(\Sigma)}}<\varepsilon \quad \forall t\le 0 \,,
\]
imply \eqref{tu:assume1}, for $\a<\a'$, thanks to \cref{absorptionrmk}. Moreover, the assumption \cite[(4.3)]{ChoiMantoulidis2022}:
\[
\|u\|_{L^1(\Sigma\times(-\infty, 0])}<\infty
\]
implies \eqref{tu:assume2}, thanks to \cref{absorption} \eqref{gmcf-SchauderII} (indeed, note that the $L^1$- and $L^2$-norms are comparable since $\Sigma$ is compact and $\norm{u}_{C^0(\Sigma)}\le 1$).
\end{remark}

\begin{proof}
Given all our previous results developed in the free boundary setting, the proof of this theorem now follows the steps of \cite[Theorem 4.1]{ChoiMantoulidis2022}. We therefore only include a sketch of the proof here pointing out the adjustment for the boundary case. Throughout the proof, $C>0$ denotes a constant that depends only on $\Sigma$ and $M$.

Due to the hypotheses on $u$, we can invoke the consequence of the parabolic Schauder estimates given in \cref{absorption} to deduce that  
\begin{equation}\label{f1-est}
f(t)\eqdef\|u(\cdot, t)\|_{C^{2,\a}(\Sigma)}\le C\varepsilon\,.
\end{equation}
First we claim that we can apply the ODE lemma \cite [Lemma B.1]{ChoiMantoulidis2022} (a refined version of \cite[Lemma A.1]{MerleZaag1998}) to deduce that 
\begin{equation}\label{dommode}
\|u^+(\cdot,t)\|_{L^2(\Sigma)}+\|u^0(\cdot,t)\|_{L^2(\Sigma)}\le Cf(t)\|u^-(\cdot,t)\|_{L^2(\Sigma)}\,.
\end{equation}
To see this, we note that the quantities $x=\|u^0(\cdot,t)\|_{L^2(\Sigma)}$, $y=\|u^+(\cdot,t)\|_{L^2(\Sigma)}$ and $z=\|u^-(\cdot,t)\|_{L^2(\Sigma)}$, satisfy the hypotheses of \cite [Lemma B.1]{ChoiMantoulidis2022}, due to their evolution equations given in \cref{ADSlemma} together with the smallness of $f(t)$ and the fact that $\liminf_{s\to-\infty} y(s)=0$, which follows from the assumption \eqref{tu:assume2}. \cite [Lemma B.1]{ChoiMantoulidis2022} will then yield \eqref{dommode} provided that we exclude that
\begin{equation}\label{y+z}
y+z\le Cf(t) x\,.
\end{equation}
Indeed, \eqref{y+z} together with \cref{absorption} would imply that 
\begin{equation}\label{C2-L2}
f(t)\le C \norm{u}_{L^2(\Sigma\times(t-1,t))} \le  C \|u^0\|_{L^2(\Sigma\times(t-1, t))}\le  \max_{s\in (t-1, t)}\|u^0(\cdot, s)\|_{L^2(\Sigma)}\,.
\end{equation}
Using the evolution of $x$ provided in \cref{ADSlemma}, this implies that $V(t)\eqdef \max_{s\in (t-1, t)}\|u^0(\cdot, s)\|_{L^2(\Sigma)}$ satisfies $V'\le C V^2$. Integrating, this shows that $V(t)\ge C\abs{t}^{-1}$ for every $t\in (-\infty,0]$ sufficiently small, which contradicts assumption \eqref{tu:assume2}. As a result, we have that \eqref{dommode} holds.

Next, we can apply  \cite[Claims 4.5, 4.6 and (4.19)]{ChoiMantoulidis2022} verbatim, with the proofs unaltered. Observe that the integrability of $f(t)$ used in \cite[Claim~4.6]{ChoiMantoulidis2022} follows from \cite[Claim~4.5]{ChoiMantoulidis2022}.\footnote{In \cite{ChoiMantoulidis2022}, the integrability of $f(t)$ follows more directly from the assumption $\norm{u}_{L^1(\Sigma)}<\infty$.}
As a result, we get that there exists $J\in\{1,\dots, I\}$ such that 
\[
f(t)\le C e^{-\lambda_J t}\quad \forall t<0\,,
\]
and, defining $u^\tau(\cdot, t)\eqdef u(\cdot, t-\tau)$ for $\tau>0$, we have
\[
\limsup_{\tau\to\infty} e^{-\l_J\tau} \|\Phi^-({u^{\tau}})(\cdot, 0)\|_{L^2(\Sigma)}>0 \,.
\]
Let $a_k\eqdef\langle u^\tau(\cdot,0),\phi_k(\cdot) \rangle_{L^2(\Sigma)}$ for $k=1,\dots, I$ and 
set $\phi^\tau \eqdef \sum_{k=1}^Ia_k\phi_ke^{-\l_kt}$. We want to show that, for $\tau$ sufficiently large, $u^\tau-\phi^\tau$ is the solution we constructed in \cref{construction} with $\phi=\phi^\tau$. 
First, observe that
\[
\abs{a} = \left(\sum_{k=1}^I a_k^2\right)^{\frac 12}\le \|u^\tau(\cdot,0)\|_{L^2(\Sigma)}=\|u(\cdot,-\tau)\|_{L^2(\Sigma)}\le Ce^{\l_J \tau}\,.
\]
Moreover, by the $L^2$-estimate given in \cite[Claim 4.7]{ChoiMantoulidis2022}, together with the Schauder estimates \cref{Schauder}, we have
\[
\|u^\tau -\phi^\tau \|_{C^{2,\a,\l}}\le C e^{2\l_J\tau}\,,\quad\text{ for some }\l\in (\l_I, 0)\,.
\]
Therefore, we can choose $\tau$ sufficiently small such that $\abs{a}<\varepsilon$ and $\|u^\tau -\phi^\tau \|_{C^{2,\a,\l}}<\varepsilon$, for $\varepsilon$ as in \cref{construction}, which shows that $u^\tau-\varphi^\tau$ coincides with the unique solutions constructed in \cref{construction}.
\end{proof}

\begin{corollary} \label{COR} If $\Sigma$ is nondegenerate, then \cref{uniqueness} holds without the assumption \eqref{tu:assume2}.
\end{corollary}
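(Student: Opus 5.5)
Nondegeneracy means $\Phi^0=\{0\}$, so $u^0\equiv 0$ and the decomposition reduces to $u=u^++u^-$. In the proof of \cref{uniqueness}, assumption \eqref{tu:assume2} was used in two places. First, it gave $\liminf_{t\to-\infty}y(t)=0$, which is a hypothesis of the Merle--Zaag type ODE lemma. Second, it excluded the alternative \eqref{y+z}. With $x\equiv 0$, the second use disappears, since \eqref{y+z} would force $y+z\le 0$, i.e.\ $u(\cdot,t)\equiv0$ (either for all $t$, which is the trivial solution and fits trivially, or the ODE alternative cannot occur). So I will check that the first hypothesis, and the integrability of $f$ used afterwards, can be recovered from \eqref{tu:assume1} alone. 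Then I will run the rest of the argument verbatim.

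For the two-component system I would argue directly rather than through \cite[Lemma~B.1]{ChoiMantoulidis2022}. By \cref{ADSlemma} and \eqref{f1-est}, with $f(t)\le C\varepsilon$,
\[
y'\le -\lambda^+ y + C\varepsilon(y+z),\qquad z'\ge -\lambda_I z - C\varepsilon(y+z),
\]
where $\lambda^+>0>\lambda_I$. The goal is the cone estimate $y\le C\varepsilon z$ for all $t\le0$. Suppose at some $t_0$ we have $y(t_0)>Kz(t_0)$ with $K$ a fixed moderate constant, say $K=1$. For $\varepsilon$ small, $\frac{d}{dt}(y-Kz)\le -\frac{\lambda^+}{2}(y-Kz)$ wherever $y\ge Kz$. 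Integrating backwards, the cone $\{y\ge Kz\}$ is backward invariant and $y-Kz$ grows like $e^{\lambda^+\abs{t}/2}$ as $t\to-\infty$. This contradicts $y\le C\norm{u}_{C^0}\le C\varepsilon$, which holds by \eqref{tu:assume1}. Hence $y\le z$ for all $t$. Feeding this back in, $(y-C'\varepsilon z)'\le-\frac{\lambda^+}{2}(y-C'\varepsilon z)$ on the set where it is positive, and the same boundedness argument yields $y\le C\varepsilon z$. A refinement with $f(t)$ in place of $\varepsilon$ should give \eqref{dommode} with $Cf(t)$, as in the lemma. I expect this to be the main obstacle: the ODE lemma needs $f$ small and the backward blow-up must be tracked carefully. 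Still, boundedness of $y$ and $z$ from \eqref{tu:assume1} replaces $\liminf y=0$, because in the degenerate case only the neutral mode $x$ could prevent exponential backward growth.

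Once \eqref{dommode} holds, $z'\ge(-\lambda_I-Cf)z\ge\frac{\abs{\lambda_I}}{2}z$. So $z(t)\le z(0)e^{\abs{\lambda_I}t/2}$ decays exponentially as $t\to-\infty$, and hence so do $\norm{u(\cdot,t)}_{L^2(\Sigma)}\le C z$ and, by \cref{absorption}, $f(t)$. In particular $f$ is integrable on $(-\infty,0]$, and \eqref{tu:assume2} holds a posteriori. We may therefore simply invoke \cref{uniqueness}, or equivalently continue with \cite[Claims 4.5--4.7]{ChoiMantoulidis2022} and \cref{construction} exactly as before to conclude that, after a time translation, $u$ coincides with one of the constructed solutions.
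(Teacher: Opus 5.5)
Your argument is correct, but it takes a different route from the paper.

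\textbf{The paper's route.} The paper re-enters the proof of \cref{uniqueness} and observes that, with $x\equiv 0$, the alternative \eqref{y+z} cannot occur for a nontrivial solution. It asserts that this is the only place \eqref{tu:assume2} is used. It does not address the hypothesis $\liminf_{t\to-\infty}y(t)=0$ of \cite[Lemma~B.1]{ChoiMantoulidis2022}, which the proof of \cref{uniqueness} also derives from \eqref{tu:assume2}.

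\textbf{Your route.} You use that for nondegenerate $\Sigma$ the system has only two components. A backward-invariance argument for the cone $\{y\ge z\}$, using only the bound $y\le C\varepsilon$ supplied by \eqref{tu:assume1}, gives $y\le z$ for all $t\le 0$. Hence $z'\ge(\abs{\lambda_I}-2C\varepsilon)z$, so $\norm{u(\cdot,t)}_{L^2(\Sigma)}$ decays exponentially backward, and by \eqref{gmcf-SchauderII} so does $f(t)$. Therefore \eqref{tu:assume2} holds a posteriori, and \cref{uniqueness} applies as a black box.

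\textbf{What each buys.} Your version avoids the Merle--Zaag lemma altogether. It closes the $\liminf y=0$ point, and it shows the stronger fact that for nondegenerate $\Sigma$ the smallness \eqref{tu:assume1} alone forces exponential backward convergence. The paper's version is shorter but depends on the internal structure of the Choi--Mantoulidis argument.

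\textbf{Two remarks.} First, the ``refinement with $f(t)$ in place of $\varepsilon$'' that you flag as the main obstacle is not needed on your route: the final step uses only $y\le z$. Second, there is one more hidden use of \eqref{tu:assume2}, present in both your argument and the paper's. \cref{ADSlemma} invokes \cref{prop:reversedpoincare}, which requires $\area(\Sigma_{u(\cdot,t)})\le\area(\Sigma)$. In the nondegenerate case this can be recovered from the following facts:
\begin{itemize}
\item area is monotone along the flow, so $\int_{-\infty}^0\int_{\Sigma_t}H^2<\infty$;
\item \cref{absorption} gives uniform $C^{2,\alpha}$ bounds on $u(\cdot,t)$;
\item $0$ is the only $C^{2,\alpha}$-small free boundary minimal graph over $\Sigma$.
\end{itemize}
It is worth adding a line making this explicit.
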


\begin{proof}
This follows directly from the proof of \cref{uniqueness}. Indeed, assumption \eqref{tu:assume2} is used there only to exclude \eqref{y+z} in the use of \cite[Lemma B.1]{ChoiMantoulidis2022}. However, if the kernel of the Jacobi operator on $\Sigma$ is empty, then $x=0$ trivially and therefore \eqref{y+z} cannot occur for a nontrivial solution.
\end{proof}

\begin{remark}
The nondegeneracy assumption in the above corollary can be weakened to integrability, following the argument of \cite[Lemma 4.9]{ChoiMantoulidis2022}. In that case, one must allow for convergence to a nearby minimal hypersurface to $\Sigma$ in the conclusion, but the result still holds.
In fact, the assumptions cannot be weakened further as there are examples of mean-convex ancient solutions backward converging to degenerate minimal hypersurfaces slower than exponentially.
\end{remark}

\section{Constructing mean-convex ancient solutions}\label{sec:meanconvex}

In this section, we show an alternative way to construct mean-convex ancient solutions of the free boundary mean curvature flow coming out of the minimal hypersurface $\Sigma$, different from the general method presented in \cref{sec:construction}. For mean-convex solutions there is a more geometric way of constructing them, by first constructing sub- and supersolutions via varying the minimal hypersurface by the first eigenvalue. We remark here that varying the minimal hypersurface by the first eigenvalue is not as straightforward as in the closed case, as preserving the orthogonality condition is far from trivial. We achieve this by a delicate application of the implicit function theorem,  inspired by \cite{Ambrozio2015}*{Proposition~10} (see also \cite{MaximoNunes2013}*{Proposition~5.1}, \cite{Mazurowski2022}*{Section~4.5}), which allows us to prove that it is possible to foliate a neighborhood of $\Sigma$ by free boundary mean-convex hypersurfaces (in the sense that their mean curvature vector points away from $\Sigma$). 

We in fact prove a more general version in the following \cref{prop:SurfKthEigenf}.  We assume that $\Sigma$ is  a properly embedded, smooth, free boundary hypersurface in $M$, not necessarily minimal. In the proposition we consider the quantities $\overline \nu$ and $\psi_\Sigma$ as in \cref{sec:GraphsViaFoliation}. Namely, $\overline \nu$ is a vector field extending, in $M$, a choice of unit normal to $\Sigma$, such that $\bar\nu$ is tangent to $\partial M$ and $\psi_\Sigma$ is the flow of $\overline \nu$ passing through $\Sigma$ at time zero.  

\begin{proposition}\label{prop:SurfKthEigenf}
Let $\Sigma$ be a properly embedded, smooth, free boundary hypersurface in $M$ and fix $k = 1,\ldots,\ind(\Sigma)$. Then, for every $\rho\in(\l_I, 0)$ and $\d>0$ , there exist $T\in\R$ and a  $C^{2,\a}$-function $u\colon \Sigma\times (-\infty,T)\to\R$ with $\|u(\cdot,t)\|_{C^{2,\a}(\Sigma)}<\delta$ for all $t\in(-\infty,T)$, and satisfying the following property. Consider the function
\[
w(x,t) = \varphi_k e^{-\lambda_kt}  + u(x,t)\,,
\]
where recall that $\phi_k$ is an eigenfunction corresponding to the eigenvalue $\l_k$.
Then, the hypersurface
\begin{equation} \label{eq:GraphicalSurf}
\Sigma_t=\Sigma_{w(\cdot,t)}\eqdef \{\psi_\Sigma(x,w(x,t)) \st x\in\Sigma\}
\end{equation}
is free boundary with mean curvature $H_w$ satisfying
\begin{equation} \label{eq:AlmostMC}
\frac{H_w - H_\Sigma}{\sk{\nu_w}{\bar\nu}} =  -\lambda_k \varphi_k e^{-\lambda_kt}  - \rho u \,,
\end{equation}
where $\nu_w$ is the upward pointing normal to $\Sigma_t$. 
Moreover, 
\begin{equation} \label{eq:EstUUt}
\abs{u(x,t)}\le C e^{-2\lambda_k t}\,,\quad \abs{u_t(x,t)}\le C e^{-2\lambda_k t}\,.
\end{equation}
In particular, $\Sigma_t$ converges exponentially fast to $\Sigma$ as $t\to-\infty$. 
\end{proposition}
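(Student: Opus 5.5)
We reduce the statement to a one-parameter family of elliptic problems and solve each by the implicit function theorem, following \cite{Ambrozio2015}*{Proposition~10}. Set $s\eqdef e^{-\lambda_k t}$, so $s\to0^+$ as $t\to-\infty$. We look for $u(\cdot,t)=v_s$ with $s$ small. Define the map
\[
\mathcal F\colon (-s_0,s_0)\times X\to C^{0,\a}(\Sigma)\times C^{1,\a}(\partial\Sigma)\,,\qquad X\eqdef \{v\in C^{2,\a}(\Sigma)\st \textstyle\int_\Sigma v\varphi_k=0\}\,,
\]
\[
\mathcal F(s,v)\eqdef\Bigl(\frac{H_{s\varphi_k+v}-H_\Sigma}{\sk{\nu_{s\varphi_k+v}}{\bar\nu}}+\lambda_k s\varphi_k+\rho v\,,\ \sk{\nu_{s\varphi_k+v}}{\eta}\Bigr)\,.
\]
The pair \eqref{eq:AlmostMC} together with the free boundary condition is then exactly $\mathcal F(s,v)=0$. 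We have $\mathcal F(0,0)=0$ because $\Sigma$ is free boundary. This is a smooth map between Banach spaces, since mean curvature and contact angle depend smoothly on the $C^{2,\a}$ graph function, as in \cref{sec:GraphsViaFoliation}.

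The linearization is $D_v\mathcal F(0,0)v=(\jac_\Sigma v+\rho v,\,-b(v))$, using \eqref{linearization} in the non-minimal case; the linearization of $\sk{\nu_u}{\bar\nu}$ is still $1$. Since $\rho\in(\lambda_I,0)$ is not an eigenvalue, $-\rho$ is not in the spectrum of $-\jac_\Sigma$ with Robin condition $b$. If needed, we shrink $\rho$ slightly or take it generic among non-eigenvalues, as the statement presumably intends. Then $(\jac_\Sigma+\rho,b)\colon C^{2,\a}\to C^{0,\a}\times C^{1,\a}$ is an isomorphism: Fredholm index zero by elliptic Schauder theory, and injective by the spectral decomposition.

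The constraint $\int v\varphi_k=0$ must also be respected. The cleanest choice is to drop it and work on all of $C^{2,\a}(\Sigma)$, where the linearization is already invertible. The IFT then gives a smooth curve $s\mapsto v_s$ with $v_0=0$ and $\mathcal F(s,v_s)=0$ for $|s|<s_0$.

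Next we compute $\partial_s v_s|_{s=0}$. Differentiating $\mathcal F(s,v_s)=0$ at $s=0$ gives $(\jac_\Sigma+\rho)(\varphi_k+\dot v)+\lambda_k\varphi_k-\rho\varphi_k=0$, i.e. $(\jac_\Sigma+\rho)\dot v=0$, together with $b(\varphi_k+\dot v)=b(\dot v)=0$. Hence $\dot v=0$. By smoothness of the curve, $\|v_s\|_{C^{2,\a}}\le Cs^2$ and $\|\partial_s v_s\|_{C^{2,\a}}\le Cs$.

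Finally we set $u(x,t)=v_{e^{-\lambda_k t}}(x)$ for $t<T$, with $e^{-\lambda_kT}<s_0$ chosen so that $Cs^2<\delta$. Then $|u|\le Ce^{-2\lambda_k t}$, and
\[
u_t=-\lambda_k s\,\partial_s v_s=O(s^2)=O(e^{-2\lambda_k t})\,,
\]
which gives \eqref{eq:EstUUt}, since $e^{-\lambda_k t}\to0$ as $t\to-\infty$ because $\lambda_k<0$.

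The main obstacle is justifying the Fréchet smoothness of the boundary component $v\mapsto\sk{\nu_{s\varphi_k+v}}{\eta}|_{\partial\Sigma}$ into $C^{1,\a}(\partial\Sigma)$, and the invertibility of the Robin-type operator in Hölder spaces. The first would be handled via the explicit formula \eqref{e-form}, which shows the map is a smooth function of $(x,u,\nabla u)$ restricted to the boundary. The second follows from Schauder estimates for oblique/Robin problems (\cite{GilbargTrudinger2001}) combined with the Fredholm alternative.
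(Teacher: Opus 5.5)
Your proposal is correct and follows the paper's proof essentially step for step. Both substitute $\tau=e^{-\lambda_k t}$ and apply the implicit function theorem to the map whose zero set encodes \eqref{eq:AlmostMC} together with the free boundary condition, with invertible linearization $(\jac_\Sigma+\rho,\,-b)$. Both then show $\partial_\tau u|_{\tau=0}=0$ and Taylor expand to obtain \eqref{eq:EstUUt}.

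One small correction: the kernel of $\jac_\Sigma+\rho$ under $b=0$ consists of solutions of $-\jac_\Sigma v=\rho v$, so the relevant eigenvalue is $\rho$, not $-\rho$. Since $(\lambda_I,0)$ contains no eigenvalues, this kernel is trivial for every $\rho$ in that range. There is therefore no need to shrink $\rho$ or take it generic, and doing so would weaken the statement.
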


\begin{remark}
Observe that we are only considering negative eigenvalues, since in this way  $\varphi_{k}e^{-\l_k t}$ converges to $0$ as $t\to-\infty$.

The idea of the proposition is that, if $\Sigma$ is minimal, we expect the existence of an ancient mean curvature flow whose graph over $\Sigma$, at first order, looks like $\varphi_k e^{-\lambda_kt}$ for $t\in(-\infty,T)$, with mean curvature, at first order, being $-\lambda_k\varphi_k e^{-\lambda_kt}$. Our proposition does not directly give this ancient solution, but it adds a correction $u$ to the first order approximation  to satisfy the orthogonality condition at the boundary. Indeed, notice that a priori the graph of $\varphi_k e^{-\lambda_kt}$ over $\Sigma$ is not necessarily orthogonal to the boundary of~$M$.
\end{remark}
\begin{remark}
Assume that $\Sigma$ is an unstable free boundary minimal hypersurface.
Note that $\varphi_k$ changes sign for all $k>1$, while $\varphi_1>0$. Therefore, as observed in \cite[Section~2.2]{HaslhoferKetover2025}, the graph of $\varphi_1 e^{-\l_1 t}$ is a mean-convex foliation of a neighborhood of $\Sigma$. However, this is not necessarily free boundary, as remarked above. In \cref{prop:SurfKthEigenf}, we find a lower order correction such that the hypersurfaces $\Sigma_t$ form a smooth \emph{free boundary} mean-convex foliation of a neighborhood of $\Sigma$.
\end{remark}
\begin{remark}\label{w-norm}
The function $w$ constructed in \cref{prop:SurfKthEigenf} satisfies 
\[
\|w(\cdot, t)\|_{C^{2,\a}(\Sigma)}\le Ce^{-\l_k t}\quad\forall t\in (-\infty, T)\,.
\]
Indeed, since $J_\Sigma(w)+ E(w)=-\l_k\phi_k e^{-\l_k t}-\rho u$, the elliptic Schauder estimates, together with \cref{E-estimates}, yield 
\[
\begin{split}
\|w\|_{C^{2,\a}(\Sigma)}&\le C(\|w\|_{C^0(\Sigma)}+\|E(w)+\l_k\phi_ke^{-\l_k t}+\rho u\|_{C^{0,\a}(\Sigma)}+\|\e(w)\|_{C^{1,\a}(\partial\Sigma)})\\
&\le C(\|w\|_{C^0(\Sigma)}+\|w\|_{C^{1,\a}(\Sigma)}\|w\|_{C^{2,\a}(\Sigma)}+e^{-\l_k t}+\rho\| u\|_{C^{0,\a}(\Sigma)})\\
&\le C(e^{-\l_k t} + (\|w\|_{C^{1,\a}(\Sigma)} + \rho)\|w\|_{C^{2,\a}(\Sigma)}) \,.
\end{split}
\]
Moreover, recall that $\norm{w}_{C^{2,\a}(\Sigma)}\le C e^{-\l_kt} + \norm{u}_{C^{2,\a}(\Sigma)} < C e^{-\l_kt} + \delta$. As a result, for $\d>0$ and $\rho>0$ sufficiently small, and for time $t\in(-\infty,T)$ sufficiently small, we can absorb the term $(\|w\|_{C^{1,\a}(\Sigma)} + \rho)\|w\|_{C^{2,\a}(\Sigma)}$ on the left-hand side and obtain the result.
\end{remark}
\begin{proof}
Observe that $\rho\in(\l_I,0)$ is not an eigenvalue of the Jacobi operator on $\Sigma$, namely the following problem does not admit nontrivial solutions 
\[
\begin{cases}
-\jac_\Sigma v  = \rho v & \text{on $\Sigma$}\\
\frac{\partial v}{\partial \eta} = \II^{\partial M} (\nu,\nu) v  & \text{on $\partial \Sigma$} \,.
\end{cases}
\]

Now, it is convenient to adopt the change of variable $\tau = e^{-\lambda_k t}$ and look for $\tau$ defined in $(-\delta,\delta)$ for some $\delta>0$.
Then, let us define
\[
h_{\varphi_k\tau+u}(x,\tau) \eqdef -\lambda_k\varphi_k \tau\sk{\nu_{\varphi_k\tau+u}}{\bar\nu}\,,
\]
and the map $\Phi\colon (-\delta,\delta)\times (B(0,\delta)\subset C^{2,\alpha}(\Sigma))\to C^{0,\alpha}(\Sigma)\times C^{1,\alpha}(\partial \Sigma)$ as\[
\Phi(\tau,u) = \left(H_{\varphi_k\tau + u}  - H _\Sigma - h_{\varphi_k\tau+u} + \rho u \sk{\nu_{\varphi_k\tau+u}}{\bar\nu}, \sk{\nu_{\varphi_k\tau+u}}{\eta_{\varphi_k\tau+u}} \right) \,.
\]
Recall that $\nu_{\varphi_k\tau+sv}$ coincides with $\bar\nu$ for $\tau=0$ and $s=0$, and $\frac{d}{ds}|_{s=0} \sk{ \nu_{sv}}{\bar\nu} = 0$ (see \cref{sec:GraphsViaFoliation}).
Therefore, by \cite{Ambrozio2015}*{Proposition~17} (see also proof of Proposition~10 therein), we have that
\[
D\Phi_{(0,0)} (0,v) = \frac{d}{ds}\big|_{s=0} \Phi(0,sv) = \left( \jac_\Sigma v +\rho v , -\frac{\partial v}{\partial \eta} + \II^{\partial M}(\nu,\nu) v \right) \,.
\]
Then, $D\Phi_{(0,0)}(0,\cdot)\colon C^{2,\alpha}(\Sigma)\to C^{0,\alpha}(\Sigma)\times C^{1,\alpha}(\partial\Sigma)$ is invertible. Indeed, it is injective by the choice of $\varepsilon$, and it is surjective by \cite{LadyzhenskayaUraltseva1968}*{Theorem~3.2, pp. 137}.

As a result, by the implicit function theorem (possibly taking $\delta>0$ smaller), there exists a function $u\colon (-\delta,\delta)\to B(0,\delta)\subset C^{2,\alpha}(\Sigma)$ such that $u(0)=0$ and $\Phi(\tau,u(\tau)) = 0$ for all $\tau\in(-\delta,\delta)$. Note that, since $\Phi$ is smooth, then $u$ is smooth as well.
Let us define $w(x,\tau)=\varphi_k(x)\tau+u(\tau)(x)$, then the hypersurface $\Sigma_\tau = \Sigma_{w(\cdot,\tau)}$ is a free boundary hypersurface with mean curvature $H_w$ satisfying \eqref{eq:AlmostMC}.

Let us now compute the derivative of  
\[
0 = \Phi(\tau,u(\tau)) = \left(H_w - H_\Sigma - h_{w} + \rho u\sk{\nu_{w}}{\bar\nu}, \sk{\nu_w}{\eta_w}\right)
\]
with respect to $\tau$ at $\tau=0$. Note that, at $\tau = 0$, 
\[
h_w' = -\lambda_k\varphi_k.
\]
Thus, defining $w' = \frac{\partial w}{\partial \tau}\big|_{\tau=0} = \varphi_k + u'$ we get
\begin{align*}
0 &= \left(\jac_\Sigma w' - (h_{w})' +\rho u', -\frac{\partial w'}{\partial \nu} + \II^{\partial M}(\nu,\nu)w'\right)= \left(\jac_\Sigma u' +\rho u', -\frac{\partial u'}{\partial \nu} + \II^{\partial M}(\nu,\nu)u'\right) \,,
\end{align*}
where in the second equality we used that $\varphi_k$ is an eigenfunction of the Jacobi operator with respect to eigenvalue $\lambda_k$.
This implies that
\[
\begin{cases}
-\jac_\Sigma u'  = \rho u' & \text{on $\Sigma$}\\
\frac{\partial u'}{\partial \nu} = \II^{\partial M}(\nu,\nu)u'  & \text{on $\partial\Sigma$}\,,
\end{cases}
\]
namely $u'$ is an eigenfunction of the Jacobi operator relative to eigenvalue $0$ and it is therefore equal to $0$ by the nondegeneracy assumption.
As a result, since $u(0)=u'(0)=0$ and $u$ is smooth ($C^2$ would be sufficient), there exists $C>0$ such that $\abs{u(\tau)}\le C\tau^2$ and $\abs{u'(\tau)}\le C\tau$. By changing variable back to $t$, we get the desired estimates \eqref{eq:EstUUt} on $\abs{u}$ and $\abs{u_t}$, concluding the proof.
\end{proof}

From now on we assume that $\Sigma$ is a free boundary minimal hypersurface with index $I>0$. 
We construct sub- and supersolutions to mean curvature flow that are close to the family $\{w(\cdot,t)\}_t$ constructed in \cref{prop:SurfKthEigenf} for the case $k=1$, associated to the first eigenfunction $\phi_1$. Note that this $w$ satisfies
\begin{equation}\label{wt}
\begin{cases}
\displaystyle   w_t= -\l_1\phi_1 e^{-\l_1t}+  u_t= \frac{H_{w}}{\langle \nu_w, \overline\nu\rangle}+\rho u+ u_t & \text{on $\Sigma$}\\
\sk{\nu_w}{\bar\nu} = 0 & \text{on $\partial\Sigma$} \,,
\end{cases}
\end{equation}
and therefore, comparing with \eqref{gmcf}, the evolving hypersurfaces $\Sigma_{w(\cdot, t)}$ ``almost" satisfy mean curvature flow.
Let us now construct sub- and supersolutions by slightly reparametrizing the time.
\begin{lemma}\label{sub/super} Given $\l\in(\l_I, 0)$, we define the families
\[
w^\pm(x,t)= w(x, t\pm e^{-\l t})\,,
\]
where $\{w(\cdot,t)\}_t$ is the function constructed in the \cref{prop:SurfKthEigenf} for the case $k=1$, associated to the eigenfunction $\phi_1>0$.
Then there exists $t_\l\in\R$ such that $w^+$ is a supersolution and $w^-$ is a subsolution of mean curvature flow for all $t<t_\l$, that is
\[
\frac{d w^+}{dt}\ge \frac{H_{w^+}}{\langle \nu_{w^+}, \overline\nu\rangle}\quad \text{ and }\quad \frac{d w^-}{dt}\le \frac{H_{w^-}}{\langle \nu_{w^-}, \overline\nu\rangle}\,.
\]
\end{lemma}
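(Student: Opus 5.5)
The plan is to reduce both inequalities to a single pointwise comparison, using that $w^\pm$ is just a time reparametrization of $w$. Set $s_\pm(t)\eqdef t\pm e^{-\l t}$, so that $w^\pm(x,t)=w(x,s_\pm(t))$. Since $\l<0$, we have $e^{-\l t}\to0$ as $t\to-\infty$. Hence for $t$ sufficiently negative $s_\pm(t)<T$, with $T$ as in \cref{prop:SurfKthEigenf} for $k=1$, and $w^\pm$ is well defined. Moreover $|s_\pm(t)-t|\le 1$, so $e^{-\l_1 s_\pm}$ and $e^{-\l_1 t}$ are comparable up to a constant.

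\textbf{Step 1: reduce to $w_t$.} For each fixed $t$, the hypersurface $\Sigma_{w^\pm(\cdot,t)}$ is the hypersurface $\Sigma_{w(\cdot,s_\pm(t))}$. In particular it is free boundary, and
\[
\frac{H_{w^\pm}}{\sk{\nu_{w^\pm}}{\bar\nu}}(x,t)=\frac{H_w}{\sk{\nu_w}{\bar\nu}}(x,s_\pm(t)).
\]
Using \eqref{wt}, this equals $w_t(x,s_\pm)-\rho u(x,s_\pm)-u_t(x,s_\pm)$. By the chain rule,
\[
\frac{dw^\pm}{dt}-\frac{H_{w^\pm}}{\sk{\nu_{w^\pm}}{\bar\nu}} = w_t(x,s_\pm)\,(s_\pm'-1)+\rho u(x,s_\pm)+u_t(x,s_\pm),
\]
where $s_\pm'-1=\mp\l e^{-\l t}$. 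Because $\l<0$, this factor equals $+|\l|e^{-\l t}$ for $w^+$ and $-|\l|e^{-\l t}$ for $w^-$.

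\textbf{Step 2: positivity of $w_t$.} Here I use $w_t=-\l_1\phi_1e^{-\l_1 s}+u_t$, together with $\phi_1\ge\min_\Sigma\phi_1>0$, $-\l_1>0$, and the bound $|u_t|\le Ce^{-2\l_1 s}$ from \eqref{eq:EstUUt}. Together these give $w_t(x,s)\ge c\,e^{-\l_1 s}$ for $s$ sufficiently negative, with $c>0$. Hence the first term in Step 1 has absolute value at least $c|\l|e^{-\l t}e^{-\l_1 s_\pm}$, with sign $+$ for $w^+$ and $-$ for $w^-$.

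\textbf{Step 3: absorb the errors.} By \eqref{eq:EstUUt}, the remaining terms satisfy $|\rho u+u_t|(x,s_\pm)\le Ce^{-2\l_1 s_\pm}$. It therefore suffices that $c|\l|e^{-\l t}\ge Ce^{-\l_1 s_\pm}$. Since $e^{-\l_1 s_\pm}\le C'e^{-\l_1 t}$, this reduces to requiring that $e^{(\l_1-\l)t}$ be bounded below by a constant. This holds for $t$ sufficiently negative because $\l>\l_I\ge\l_1$, so $\l_1-\l<0$ and $e^{(\l_1-\l)t}\to\infty$. The exponent $-\l t$ is thus dominated by the error exponent $-\l_1 t$ only in the favorable direction, and this is exactly where the hypothesis $\l\in(\l_I,0)$ is used.

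Choosing $t_\l$ so negative that Steps 2–3 hold and $s_\pm<T$ gives both inequalities. The main obstacle is only the bookkeeping of exponential rates in Step 3 and checking the sign conventions, namely that $\phi_1>0$ makes $w_t>0$. No new analysis beyond \cref{prop:SurfKthEigenf} should be needed.
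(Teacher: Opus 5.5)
Your proposal is correct and follows essentially the same route as the paper. You write $w^\pm$ as $w$ composed with the time reparametrization $t\pm e^{-\l t}$, apply the chain rule together with \eqref{wt} to isolate the leading term $-\l_1\phi_1 e^{-\l_1 s}f'(t)$ (which has a definite sign because $\phi_1>0$), and absorb the $O(e^{-2\l_1 s})$ errors from \eqref{eq:EstUUt} for $t$ sufficiently negative using $\l>\l_1$. Your bookkeeping, namely the lower bound $w_t\ge c\,e^{-\l_1 s}$ and the comparability $|s_\pm-t|\le 1$, is slightly more explicit than the paper's, but it is the same argument.
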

\begin{remark}\label{sub/sup rmk} Note that \cref{sub/super} is also true for $\phi_1<0$, reversing the roles of $w^+$ and $w^-$.
\end{remark}
\begin{proof}
Let $w_f(x,t)= w(x, t+f(t))$. Then, using \eqref{wt},
\[
\begin{split}
\frac{d w_f}{dt}= \frac{\partial w}{\partial t} (1+f'(t))= \left(\frac{H_{w_f}}{\langle \nu_{w_f}, \overline\nu\rangle}+\rho u+\frac{\partial u}{\partial t}\right)(1+f'(t))\,,
\end{split}
\]
which implies
\begin{equation}\label{w-f}
\frac{d w_f}{dt}-\frac{H_{w_f}}{\langle \nu_{w_f}, \overline\nu\rangle}=-\l_1\phi_1 e^{-\l_1 (t+f(t))} f'(t)+ \left(\rho u+\frac{\partial u}{\partial t}\right)(1+f'(t))\,.
\end{equation}
Recall that, by   \cref{prop:SurfKthEigenf}, $|u|+|u_t|\le Ce^{-2\l_1 t}$ and   $\phi_1>0$. Therefore, for any $\l\in (\l_1, 0)$, if we choose $f(t)= \pm e^{-\l t}$, we obtain that for  
 all $t$ sufficiently small the right-hand side of \eqref{w-f} is positive and negative respectively. 
Hence, the result follows for $w^\pm=w_{\pm e^{-\l t}}$.
\end{proof}

\begin{theorem}\label{MCconstruction}
For the eigenfunction $\phi_1$, corresponding to the first eigenvalue $\l_1$, there exists a graphical ancient solution $\{\Sigma_t\}_{t\in (-\infty, T)}$, given as graphs of a function $v\colon\Sigma\times(-\infty,T)\to\R$ satisfying 
\[
\|v(\cdot, t)\|_{C^{2,\a}(\Sigma)}\le Ce^{-\l_1 t}\,,
\]
for a constant $C>0$ depending only on $M$ and $\Sigma$,
and 
\[
\lim_{t\to -\infty} v(x, t) e^{\l_1 t}=\phi_1(x)\,.
\]
This solution is mean convex.
\end{theorem}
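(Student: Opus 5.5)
We build the solution as a limit of flows trapped between the barriers of \cref{sub/super}, fixing some $\l\in(\l_1,0)$ (e.g.\ $\l=\l_1/2$) and $t_\l$ as given there. For each $j\in\N$ with $-j<t_\l$, we start the free boundary mean curvature flow at time $s_j=-j$ from the hypersurface $\Sigma_{w(\cdot,s_j)}$ of \cref{prop:SurfKthEigenf} (with $k=1$). By \eqref{wt} this initial datum is free boundary. Since $\phi_1>0$ and $w^\pm(\cdot,t)=w(\cdot,t\pm e^{-\l t})$, and since $w$ is increasing in $t$ for $t$ very negative (as $w_t=-\l_1\phi_1e^{-\l_1t}+u_t$ with $|u_t|\le Ce^{-2\l_1t}$), we have $w^-(\cdot,s_j)\le w(\cdot,s_j)\le w^+(\cdot,s_j)$.

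The first step is comparison. Both the barriers $\Sigma_{w^\pm}$ and the flow are free boundary in $M$, and $w^+$ (resp.\ $w^-$) is a supersolution (resp.\ subsolution) of \eqref{gmcf}. We therefore apply the parabolic maximum principle to the graph functions in the coordinates of $\psi_\Sigma$, handling the boundary with a Hopf-lemma argument. At a boundary point of first touching, both hypersurfaces meet $\partial M$ orthogonally, so their graph functions satisfy the same nonlinear Neumann condition $\sk{\nu}{\eta}=0$. The difference then solves a linear parabolic inequality with an oblique/Neumann-type linear boundary condition, whose coefficients are obtained by the mean value theorem applied to $\e$ and the linearization $-\partial_\eta+\II^{\partial M}(\nu,\nu)$. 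Hopf's lemma then excludes the touching.

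This gives $w^-(\cdot,t)\le v_j(\cdot,t)\le w^+(\cdot,t)$ for all $t\in[s_j,T']$ with $T'<t_\l$ fixed, as long as the flow $v_j$ stays graphical and $C^1$-small. In particular $|v_j(\cdot,t)|\le Ce^{-\l_1t}$, with constants independent of $j$.

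The second step, which I expect to be the main obstacle, is to get uniform $C^{2,\a}$ control so that the flows exist up to $T'$ and remain graphs. I would combine the $C^0$ pinching with a uniform $C^1$ estimate. The gradient estimate comes from comparing the flow with translates of the barriers, or from the fact that the barrier leaves $\Sigma_{w(\cdot,s)}$ form a smooth free boundary foliation with Lipschitz transversal dependence, which confines tilt. Alternatively, one can argue by continuity: as long as $\|v_j\|_{C^{1,\beta}}$ is small, \cref{absorption} and \cref{absorptionrmk} give $\|v_j\|_{C^{2,\a}(\Sigma\times(t-1,t))}\le C\|v_j\|_{L^2}\le Ce^{-\l_1 t}$ on unit intervals. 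For $T'$ very negative this is smaller than the threshold, so the continuity argument closes and the flow exists on $[s_j,T']$ with these bounds. Arzelà–Ascoli then yields a subsequence converging in $C^{2,\a'}_{\loc}$ to an ancient graphical solution $v$ on $(-\infty,T')$ satisfying $w^-\le v\le w^+$ and $\|v(\cdot,t)\|_{C^{2,\a}}\le Ce^{-\l_1t}$, as required (with $T=T'$).

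For the asymptotics, note that $w^\pm(x,t)e^{\l_1t}=\phi_1(x)e^{-\l_1(\pm e^{-\l t})}+O(e^{-\l_1 t})$. Since $\l<0$, we have $e^{-\l t}\to0$ as $t\to-\infty$, so both bounds tend to $\phi_1(x)$, and squeezing gives $v e^{\l_1t}\to\phi_1$. For mean convexity, the approximating flows start from $\Sigma_{w(\cdot,s_j)}$, whose normalized mean curvature by \eqref{eq:AlmostMC} equals $-\l_1\phi_1e^{-\l_1 s_j}-\rho u$, which is positive for $s_j$ very negative. Mean convexity of the free boundary flow is preserved by the maximum principle applied to the evolution of $H$, whose Neumann condition $\partial_\eta H=\II^{\partial M}(\nu,\nu)H$ has the favorable sign. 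Hence $H\ge0$ passes to the limit, and $H>0$ follows by the strong maximum principle because $v_t>0$ asymptotically.
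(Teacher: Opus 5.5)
Your construction of $v$ is the paper's: approximating flows started at $\Sigma_{w(\cdot,s_j)}$ and trapped between the barriers $w^\pm$ of \cref{sub/super}, uniform $C^{2,\a}$ bounds from \cref{absorption} via a continuity argument, and a compactness limit. You also supply two things the paper only asserts. The first is the boundary comparison: both hypersurfaces satisfy the same nonlinear Neumann condition, so the difference satisfies a linear oblique condition, and Hopf's lemma excludes touching. The second is the squeeze $e^{\l_1 t}w^\pm\to\phi_1$, which gives the asymptotics.

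One small omission: near the initial time $s_j$ the interior absorption estimate does not apply. Your continuity argument therefore also needs the $C^{2,\a}$-smallness of the initial datum $w(\cdot,s_j)$ (the paper uses \cref{w-norm} for this), together with Schauder estimates up to $t=s_j$. The tilt/gradient alternative you sketch is not needed.

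Where you genuinely differ is mean convexity. The paper writes the equation for $w-v$ and uses Schauder estimates plus absorption to show that $w-v$ is of lower order in $C^{2,\a}$. It then deduces $H_v\ge-\l_1\phi_1e^{-\l_1t}$ minus a lower-order error, which is positive for $t$ very negative. The paper also remarks that a direct maximum principle argument is only immediate when $\Ric>0$.

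Your argument shows this caveat is unnecessary. Each approximating flow lives on a finite interval $[s_j,T)$, and on it the linear equation $\partial_tH=\Delta H+(\abs{A}^2+\Ric(\nu,\nu))H$ preserves positivity; the minimum can decay at most like $e^{-K(t-s_j)}$. At the boundary, the condition $\partial_\eta H=\II^{\partial M}(\nu,\nu)H$ with $\II^{\partial M}>0$ forbids a positive minimum on $\partial\Sigma_t$, so the sign is indeed favorable.

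Your route is more elementary and gives $H>0$ on the whole interval $(-\infty,T)$. The paper's route instead gives quantitative information: the mean curvature is asymptotically $-\l_1\phi_1e^{-\l_1t}$, i.e.\ it has the profile of the first eigenfunction.

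For the final strict positivity, make the strong maximum principle step precise rather than appealing to ``$v_t>0$ asymptotically''. Suppose $H$ vanished at some $(p,t_0)$; at a boundary point, use Hopf's lemma, since there $\partial_\eta H=\II^{\partial M}(\nu,\nu)H=0$. Then $H\equiv0$ for $t\le t_0$, so $v$ would be static on $(-\infty,t_0]$. This contradicts $v\ge w^->0$ together with $v\to0$ as $t\to-\infty$.
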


\begin{proof}
Let $w(x,t)=\phi_1 e^{-\l_1 t}+ u(x,t)$ be as in  \cref{prop:SurfKthEigenf},  and $w^\pm$ the corresponding barriers constructed in \cref{sub/super} for some $\l\in (\l_1, 0)$. Without loss of generality assume that  $\phi_1>0$ (the case $\phi_1<0$ being similar, see \cref{sub/sup rmk}). We claim that we can choose $t_\l$,  possibly smaller than in \cref{sub/super}, so that we also have
\begin{equation} \label{eq:IneqWs}
w^+(x, t)\ge w(x,t)\ge w^-(x,t)\quad\forall t<t_\l\,.
\end{equation}
Indeed, using that $\abs{u_t(x,t)}\le Ce^{-2\l_1 t}$ by \cref{prop:SurfKthEigenf},  we get
\[
w^+(x, t)- w(x,t)=\phi_1 e^{-\l_1 t}( e^{-\l_1 e^{-\l t}}-1) + u(x, t+ e^{-\l t})- u(x, t)\ge -\l_1\phi_1e^{-(\l_1+\l) t}- C e^{-(2\l_1+\l) t}\,,
\]
which is clearly positive for $t$ small enough. Similarly we can argue for $w^-$ obtaining \eqref{eq:IneqWs}.

Now, let $t_\lambda>t_0>t_1>t_2>\ldots \to-\infty$ be a decreasing sequence of times and, for every $i\in\N$, let $\Sigma^i = \Sigma_{t_i} = \Sigma_{w(\cdot,t_i)}$ be the hypersurface as in \eqref{eq:GraphicalSurf} (see also \cref{sec:GraphsViaFoliation}).
Moreover, let $\{\Sigma^i_t\}_{t\in [t_i,T_i)}$ be the mean curvature flow starting from $\Sigma^i$ at time $t_i$. For $T_i>t_i$ sufficiently close to $t_i$, the mean curvature flow is smooth and $\Sigma^i_t$ can be written as the graph of a function $v^i(x,t)$ over $\Sigma$, namely $\Sigma^i_t = \Sigma_{v^i(\cdot,t)}$. Here observe that $v^i(\cdot,t_i) = w(\cdot,t_i)$. 

Note that $v^i$ satisfies the graphical mean curvature flow  \eqref{gmcf2} and by  \cref{sub/super} we have that 
\[
w^-(x,t)\le v^i(x, t)\le w^+(x,t)\quad \forall t\in [t_i, T_i)\,.
\]
Therefore, we have $\|v^i(\cdot,t)\|_{C^0(\Sigma)}\le \|w^+(\cdot, t)\|_{C^0(\Sigma)} \le C e^{-\l_1 t}$ for all $t\in [t_i,T_i)$, for a constant $C>0$ that depends only on $M,\Sigma$. Using the Schauder estimates \cref{Schauder}, together with the estimates of \cref{E-estimates}, we have
\[
\|v^i\|_{C^{2,\a}(\Sigma\times(t-1,t))}\le C(\|v^i\|_{L^2(\Sigma\times(t-2,t))}+\|v^i\|_{C^{1,\a}(\Sigma\times(t-2,t))}\|v^i\|_{C^{2,\a}(\Sigma\times(t-2,t))}) \,, 
\]
where $C>0$ here does not depend on $t_i$. Therefore, as long as  $\|v^i\|_{C^{1,\a}(\Sigma\times(t-2,t))}$ is small enough (depending on $M$ and $\Sigma$), by \cref{absorption}, we obtain
\[
\|v^i\|_{C^{2,\a}(\Sigma\times(t-1,t))}\le C\|v^i\|_{L^2(\Sigma\times(t-2,t))}\le Ce^{-\l_1 t}\,.
\]
Since, by Remark \ref{w-norm}, $\|v^i(\cdot, t_i)\|_{C^{2,\a}(\Sigma)}$ is initially of the order $e^{-\l_1 t_i}$, we obtain that the $v_i$'s are indeed defined on intervals $[t_i,T)$, where $T$ is independent of $i$, and they satisfy the uniform estimate
\[
\|v^i\|_{C^{2,\a}(\Sigma\times(t-1,t))}\le Ce^{-\l_1 t}\quad \forall t\in [t_i+1, T)\,.
\]
Therefore, we can extract a subsequence of $\{v_i\}$ that converges in $C^{2,\a'}_{\loc}(  \Sigma\times(-\infty, T))$, for all $ \a'<\a$, to a $C^{2,\a}$-function $v\colon \Sigma\times (-\infty, T)\to\R$, which satisfies the mean curvature flow equation \eqref{gmcf2}, 
\[
w^-(x,t)\le v(x, t)\le w^+(x,t)\quad \forall t\in (-\infty, T)\,,
\]
and
\[
\|v\|_{C^{2,\a}(\Sigma\times(t-1,t))}\le Ce^{-\l_1 t}\quad \forall t\in (-\infty, T)\,.
\]
The time level sets of this function give an ancient solution with the required estimates.

Finally, we need to show that these solutions are mean convex. We assume again that $\phi_1>0$, with the argument for the negative sign being similar. Note that if the Ricci curvature is positive, then this is immediate by the maximum principle applied to the evolution of $H$. In the general case we can argue as follows. 
Since $w$ satisfies \eqref{wt}, $v$ satisfies the graphical mean curvature flow equation \eqref{gmcf2}, and they both have free boundary, we have
\[
\begin{cases}
(w-v)_t= J_\Sigma(w-v) +E(w)- E(v) + \rho u+u_t & \text{on $\Sigma$}\\
b(w-v)=\e(w)-\e(v) & \text{on $\partial\Sigma$}\,.
\end{cases}
\]
The Schauder estimates \cref{Schauder}, together with the estimates of \cref{E-estimates} and those on $u_t$ in \cref{prop:SurfKthEigenf}, yield
\[
\begin{split}
\|w-v\|_{C^{2,\a}(\Sigma\times(t-1,t))}&\le  C\|w-v\|_{L^2(\Sigma\times(t-2,t))}+ Ce^{2\l t}+{}\\
&\phantom{\le}+C\|w-v\|_{C^{2,\alpha}(\Sigma\times (t-2, t))}(\norm{w}_{C^{2,\alpha}(\Sigma\times (t-2, t))} + \norm{v}_{C^{2,\alpha}(\Sigma\times (t-2, t))}) \,.
\end{split}
\]

Since we have that
$\norm{w}_{C^{2,\alpha}(\Sigma\times (t-2, t))} + \norm{v}_{C^{2,\alpha}(\Sigma\times (t-2, t))}\le Ce^{-\l_1 t}$, for sufficiently small $t$, we can apply \cref{absorption}. Using also the fact that  
\[
\|w(\cdot, t)-v(\cdot,t)\|_{C^0(\Sigma)}\le \|w^+(\cdot, t)-w^-(\cdot,t)\|_{C^0(\Sigma)}\le Ce^{-2\l_1t}
\]
we obtain 
\[
\|w-v\|_{C^{2,\a}(\Sigma\times(t-1,t))}\le Ce^{-2\l_1t}\,.
\]
We can now compare the mean curvature $H_v$ of the graph of $v$, with that of the graph of $w$, $H_w=-\l_1\phi_1e^{-\l_1 t}-\varepsilon u$, as follows
\[
H_v= J_\Sigma(v)+E(v)= H_w-(J_\Sigma(w-v) +E(w)- E(v)+\rho u)
\ge -\l_1\phi_1e^{-\l_1 t}-Ce^{-2\l_1t}\,.
\]
The right hand side is trivially positive for $t$ small enough, which finishes the proof.
\end{proof}

\bibliography{biblio}
\printaddress

\end{document}